\documentclass[11pt,letterpaper]{amsart}

\usepackage[T1]{fontenc}
\usepackage[utf8]{inputenc}
\usepackage{lmodern}
\usepackage[margin=1.1in]{geometry}
\usepackage{microtype}
\usepackage{xcolor}

\usepackage{amsmath,amssymb,amsfonts,amsthm}
\usepackage[numeric]{amsrefs}
\usepackage{thmtools}
\usepackage{comment}

\newif\ifdraft
\draftfalse    

\ifdraft
  \usepackage[notref,notcite]{showkeys}
  \PassOptionsToPackage{implicit=false}{hyperref}
\fi

\PassOptionsToPackage{hypertexnames=false}{hyperref}
\usepackage{enumerate}

\def\le {\leqslant}
\def\ge {\geqslant}
\DeclareMathOperator{\supp}{supp}

\newtheorem{theorem}{Theorem}[section]

\newtheorem{lemma}[theorem]{Lemma}

\theoremstyle{definition}
\newtheorem{definition}[theorem]{Definition}
\newtheorem{remark}[theorem]{Remark}
\newtheorem{example}[theorem]{Example}

\numberwithin{equation}{section}

\usepackage[hidelinks]{hyperref}

\def\cA{\mathcal{A}}

\def\RR{\mathbb{R}}

\def\EE{\mathbb{E}}
\def\PP{{\mathbb P}}

\def \<{\langle}
\def\>{\rangle}

\def\bT{\mathbf{T}}

\def\NN{\mathbb{N}}
\def\CC{\mathbb{C}}

\def\va{\varepsilon}

\def\da{{\delta}}

\def\ta{\theta}
\def\al{{\alpha}}
\def\be{\beta}

\def\ga{{\gamma}}

\def\vi{\varphi}

\def\Br{\Bigr}
\def\Bl{\Bigl}
\def\f{\frac}

\newcommand{\wt}{\widetilde}

\def\ind{\mathbf{1}}

\def\sub{\substack}

\def\ld{\lambda}

\def\Ld{\Lambda}
\def\Og{\Omega}

\def\ra{{\rangle}}
\def\la{\langle}

\newcommand{\bp}{ \begin{proof} }
  
  \newcommand{\ep}{  \end{proof} }

\newcommand{\beq}{\begin{equation}}
\newcommand{\enq}{\end{equation}}

\newcommand{\eq}[1]{\begin{align*}#1\end{align*}}

\def \supp{\operatorname{supp}}
\def \spn{\operatorname{span}}

\DeclareMathOperator{\Er}{Er}
\DeclareMathOperator{\diam}{diam}

\title[]{Empirical Approximation of $L_p$ Norms} 
\thanks{The first and third  authors  were partially supported by 
NSERC of Canada Discovery Grant RGPIN 2026-05667.
The second author was supported by AEI grant
RYC2023-043616-I and by the Spanish State Research Agency through the Severo Ochoa and Mar\'ia de Maeztu Program for Centers and Units of Excellence in R\&D (CEX2020-001084-M). The second author also thanks CERCA Programme (Generalitat de Catalunya) for its institutional support.}

\author{Feng Dai}
\address{F.~Dai, Department of Mathematical and Statistical Sciences, University of Alberta, Edmonton, Alberta T6G 2G1, Canada}
\email{fdai@ualberta.ca}

\author{Egor Kosov}
\address{E.~Kosov, Centre de Recerca Matem\`atica, Campus de Bellaterra, Edifici~C 08193
  Bellaterra (Barcelona), Spain.}
\email{kosoved09@gmail.com}

\author{Noel Murasko}
\address{N.~Murasko, Department of Mathematical and Statistical Sciences, University of Alberta, Edmonton, Alberta T6G 2G1, Canada}
\email{murasko@ualberta.ca}

\begin{document}

\subjclass[2020]{Primary 41A65; Secondary 60E15, 46B20, 42C15, 60B20.}
\keywords{Marcinkiewicz discretization, random sampling, empirical $L_p$ moments, restricted isometry property, generic chaining, uniformly convex sets}

\begin{abstract}

We study empirical $L_p$ moments of a random vector $\pmb\varphi$ based on its i.i.d.\ copies $\pmb\varphi^1,\ldots,\pmb\varphi^m$, that is, $\frac1m\sum_{j=1}^m |\langle \pmb\varphi^j,y\rangle|^p$. Our main result is a new estimate for the expected uniform deviation
\[
\mathbb{E}\sup_{y\in D}\biggl|
\frac1m\sum_{j=1}^m |\langle \pmb\varphi^j,y\rangle|^p
-\mathbb{E}|\langle \pmb\varphi,y\rangle|^p
\biggr|
\]
over an arbitrary index set $D$.
The proof is based on a new bound for Talagrand's $\gamma$-functional, sharper than the standard Dudley-type entropy estimate.
We then apply this estimate to the following two problems.

First, for $p>2$, we study Marcinkiewicz-type discretization of $L_p$ norms on an $N$-dimensional subspace $X_N\subset B(\Omega)$ of bounded functions on a probability space $(\Omega,\mu)$. We obtain bounds in terms of the norm of the embedding
$
(X_N,\|\cdot\|_{L_p(\mu)})\hookrightarrow B(\Omega).
$
In particular, we prove that when this norm is of order $N^{1/p}$ and
\[
m \ge C(p)\, N\log N\,(\log\log N)^{p-1},
\]
then $m$ random samples suffice to approximate the $L_p(\mu)$ norm uniformly on $X_N$ by the sampled discrete $L_p$ norm. This substantially improves the previously known bound in this setting
$
m \ge C(p)\, N(\log N)^{\min\{p,3\}},
$
and is optimal up to the factor $(\log\log N)^{p-1}$ in the random-sampling setting.

Second, for $1\le p<2$, we obtain an $L_p$ analogue of the restricted isometry property via random sampling for bounded orthogonal systems and, more generally, for $N$-element systems $\mathcal D_N$ satisfying a Riesz-type condition. We prove that when
\[
m \ge C(p)\, s\log N\,(\log s)^2\,\log\log s,
\]
then $m$ random samples suffice to guarantee an $L_p$ restricted isometry-type property uniformly over the class of all $s$-sparse functions generated by $\mathcal D_N$.

\end{abstract}
\maketitle

\section{Introduction}\label{sec1}

\subsection{The Marcinkiewicz discretization}
Let {$(\Omega, \mathcal{F}, \mu)$} be a probability space, where $\mathcal{F}$ is a $\sigma$-algebra of subsets of $\Omega$ and $\mu$ is a probability measure on $\mathcal{F}$. For $1\le p< \infty$, denote by $L_p(\mu) \equiv L_p(\Omega,\mu)$ the usual Lebesgue space of measurable functions $f\colon\Omega \to \mathbb{C}$ equipped with the norm
\[
\|f\|_{L_p(\mu)} = \left( \int_{\Omega} |f|^p\, d\mu \right)^{1/p}.
\]
Let $B(\Omega)$ denote the space of all bounded   functions $f\colon\Omega \to \mathbb{C}$ equipped with the uniform norm
\[
\|f\|_{\infty}:=\sup_{x\in\Omega} |f(x)|.
\]
Let $X_N \subset B(\Omega)$ be an $N$-dimensional linear space of bounded functions. We assume that any function $f \in X_N$ that vanishes $\mu$-a.e.\!\! on $\Omega$ is identically zero (equivalently, $\|\cdot\|_{L_p(\mu)}$ defines a norm on $X_N$).

Our goal is to discretize the $L_p$ norm on $X_N$ by Marcinkiewicz-type inequalities. Specifically, given $1\le p<\infty$ and $\varepsilon\in(0,1)$, we seek sampling points $\xi^1,\dots,\xi^m\in\Omega$ (with $m\ge N$)
such that
\begin{equation}\label{mz}
  (1-\varepsilon)\|f\|_{L_p(\mu)}^p
  \le
  \frac{1}{m}\sum_{j=1}^m |f(\xi^j)|^p
  \le
  (1+\varepsilon)\|f\|_{L_p(\mu)}^p,
  \quad \forall\, f\in X_N.
\end{equation}
The main problem is to determine the asymptotically optimal sample size $m$ for which \eqref{mz} holds on an arbitrary $N$-dimensional subspace $X_N$ under natural structural assumptions.

\subsection{Background and motivation}
In the classical setting, $X_N$ typically consists of trigonometric or algebraic polynomials of bounded degree on a compact domain $\Omega \subset \mathbb{R}^d$, equipped with a normalized (possibly weighted) Lebesgue measure. In this case, inequalities of the form~\eqref{mz} are known as Marcinkiewicz--Zygmund inequalities.
For these polynomial spaces, such inequalities with an asymptotically optimal sample size $m \asymp N$ have been established on various regular compact domains, such as tori, intervals, spheres, cubes, and $C^2$-smooth domains in $\RR^d$; see, for instance, \cites{Lu1, Lu2, Ko-Lu, Ma-To, KKLT, DKP, DP, Zy}. These inequalities play a fundamental role in the convergence of Fourier series, Lagrange interpolation, and polynomial approximation.
However, classical techniques rely heavily on geometric properties of $\Omega$ and often suffer from the curse of dimensionality as the ambient dimension $d$ increases. These limitations motivate a more general high-dimensional theory in which $X_N$ is an arbitrary $N$-dimensional subspace of $B(\Omega)$.
In this general setting, Marcinkiewicz discretization inequalities~\eqref{mz}
are governed by the norms of the embeddings $(X_N,\|\cdot\|_{L_p}) \hookrightarrow B(\Omega)$,
which are conveniently expressed via Nikolskii-type inequalities.

\subsection{Nikolskii-type inequality}
Given $p \in [1,\infty)$, an $N$-dimensional subspace $X_N \subset B(\Omega)$
is said to satisfy the $(p,\infty)$--Nikolskii-type inequality with constant $H\ge1$ if
\[
\|f\|_{\infty} \le H^{1/p}\,\|f\|_{L_p(\mu)}, \quad \forall\, f \in X_N.
\]
In this case, we write $X_N \in \textnormal{NI}_{p,\infty}(H)$.

It is readily seen that for any $ p \ge 2$,
\begin{equation*}
X_N \in \textnormal{NI}_{p,\infty}(H)
\implies
X_N \in \textnormal{NI}_{2,\infty}(H).
\end{equation*}
In particular, for $p \ge 2$, the condition $X_N \in \textnormal{NI}_{p,\infty}(H)$ implies that $H \ge N$.
Therefore, in this case we may write $H = KN$ for some constant $K \ge 1$.

\subsection{Prior work}
In recent years, the Marcinkiewicz discretization problem~\eqref{mz}
has been extensively studied under the $(2,\infty)$--Nikolskii-type inequality,
namely, in the case when
\begin{equation}\label{1-1}
X_N \in \textnormal{NI}_{2,\infty}(K N),
\end{equation}
where $K \ge 1$ is fixed
(see, for instance,~\cites{ACDM, DKT, Ko22, Kos, LimT, DPSTT2, DPSTT1, Tem18, KPUU}).
The techniques employed in these works are largely drawn from  the theory of embeddings of finite-dimensional subspaces (see \cites{BLM, JS, Sche, Sche3, Tal, Tal90, Tal12}).
Although a constant $K$ in~\eqref{1-1} always exists by finite-dimensional norm equivalence, its size is crucial.

We now recall the main known results under the assumption $X_N\in \textnormal{NI}_{2,\infty}(KN)$.

\smallskip
\noindent
\textbf{(i) The case $p=2$.}
By~\cites{LimT,Ko22}, building on the groundbreaking work~\cite{MSS}, the bound
\[
m \ge C\varepsilon^{-2}KN
\]
ensures an $L_2$-discretization inequality of the form~\eqref{mz}, up to an additional factor $K$ on the right-hand side (see also~\cite{Sche}).

\smallskip
\noindent
\textbf{(ii) The case $1\le p<2$.}
It was shown in~\cite{DKT}, using a refined form of Talagrand's estimate~\cite{Tal}*{Theorem~16.8.2}, that~\eqref{mz} holds whenever
\begin{equation*}
m \ge C(p,\varepsilon)
\begin{cases}
(KN)\log(KN), & p=1,\\[1mm]
(KN)\log(KN)(\log\log(KN))^2, & 1<p<2.
\end{cases}
\end{equation*}

\smallskip
\noindent
\textbf{(iii) The case $p>2$.}
This regime is different. If the $L_p$ and $L_2$ norms are equivalent on a subspace $X_N$, that is,
\[
\|f\|_{L_p(\mu)} \le M \|f\|_{L_2(\mu)}, \quad \forall\, f \in X_N,
\]
then the Marcinkiewicz discretization inequality necessarily requires at least
\begin{equation}\label{eq-lower}
m \ge c(p) M^{-p} N^{p/2}
\end{equation}
sampling points (see~\cite{BDGJN} and~\cite{KKLT}*{{\bf D.20}}). Under~\eqref{1-1}, it was shown in~\cite{Kos} that
\begin{equation}\label{p-under-2-nik}
  m \ge C(p,\varepsilon)(KN)^{p/2}\log(KN)
\end{equation}
suffices for~\eqref{mz}.
To obtain nearly linear bounds in $N$ for $p>2$,
\cite{Kos} replaced \eqref{1-1} by the stronger condition
\[
X_N \in \textnormal{NI}_{p,\infty}(KN),
\]
under which it was shown that~\eqref{mz} holds whenever
\begin{equation}\label{eq-p-1}
  m \ge C(p,\varepsilon)KN(\log(KN))^p.
\end{equation}
This was later improved in~\cite{DT1} to
\begin{equation}\label{eq-p-2}
m \ge C(p,\varepsilon)KN(\log(KN))^{\min\{3,p\}}.
\end{equation}

It is known (see Example~\ref{limitations} below) that for certain subspaces
$X_N \in \textnormal{NI}_{p,\infty}(N)$ one cannot achieve~\eqref{mz}
with fewer than $N\log N$ \emph{random} samples.
{ Thus, for $p>2$, the main unresolved issue in the random-sampling setting was whether one could bridge the gap between the lower bound of order $N\log N$ and the previously known upper bounds of order $N(\log N)^{\min\{p,3\}}$.
}

\subsection{The main result on Marcinkiewicz discretization}
In this paper, we establish the following probabilistic version of the
$L_p$ Marcinkiewicz discretization inequality for $p>2$.

{
Given a probability space $(\Omega,\mathcal F,\mu)$, we consider independent
random points $\xi^1,\ldots,\xi^m$ in $\Omega$, not necessarily identically
distributed, such that the average of their distributions is $\mu$, that is,
\begin{equation}\label{probab-cond}
\mu(E)=\frac{1}{m}\sum_{j=1}^m \mathbb P\{\xi^j\in E\},
\quad E\in\mathcal F.
\end{equation}
In particular, this condition is satisfied whenever the points $\xi^1,\dots,\xi^m$ are i.i.d.\!\! with distribution~$\mu$.
}

\begin{theorem}\label{Th-Nik}
Let $(\Omega,\mathcal F,\mu)$ be a probability space, and let
$\xi^1,\ldots,\xi^m$ be independent random points in $\Omega$ satisfying
\eqref{probab-cond}. Let $2<p<\infty$, and let $X_N\subset B(\Omega)$ be a
finite-dimensional linear subspace of bounded functions satisfying
$X_N\in \textnormal{NI}_{p,\infty}(H)$ with $H\ge 16$. Then there exists a
constant $c=c(p)>0$ such that, whenever $m\in\mathbb{N}$, $\lambda\ge e$ and
$\varepsilon\in(0,1/2]$ satisfy
\[
m \ge
\bigl(\lambda \varepsilon^{-1}\log \varepsilon^{-1}\bigr)^p
H\log H\,(\log\log H)^{p-1},
\]
the Marcinkiewicz discretization inequality~\eqref{mz} holds with probability
at least
$
1-2e^{-c\lambda}$.

\end{theorem}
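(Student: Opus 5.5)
We reduce Theorem~\ref{Th-Nik} to an estimate for the expected uniform deviation of empirical $L_p$ moments, followed by a concentration step. Write $f=\langle\pmb\varphi,y\rangle$ by choosing an $L_2(\mu)$-orthonormal basis $u_1,\dots,u_N$ of $X_N$ and setting $\pmb\varphi(x)=(u_1(x),\dots,u_N(x))$. Let $D=\{y:\|\langle \pmb\varphi,y\rangle\|_{L_p(\mu)}\le 1\}$ be the unit ball of $(X_N,\|\cdot\|_{L_p})$. We must bound
\[
Z:=\sup_{y\in D}\Bigl|\frac1m\sum_{j=1}^m|f_y(\xi^j)|^p-\|f_y\|_{L_p}^p\Bigr|
\]
by $\varepsilon$ with high probability.

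\textbf{Step 1 (symmetrization and chaining).} By Giné--Zinn symmetrization (valid for independent non-identically distributed points under \eqref{probab-cond}), $\EE Z\le 2\EE\sup_{y\in D}|\frac1m\sum_j g_j|f_y(\xi^j)|^p|$ with Rademacher or Gaussian $g_j$. Conditionally on the sample, this is a subgaussian process. Its metric is
\[
d(y,z)=\frac1m\Bigl(\sum_j\bigl(|f_y(\xi^j)|^p-|f_z(\xi^j)|^p\bigr)^2\Bigr)^{1/2}.
\]
Using $||a|^p-|b|^p|\le p|a-b|\max(|a|,|b|)^{p-1}$ and $\|f\|_\infty\le H^{1/p}$ on $D$, we get
\[
d(y,z)\le \frac{p}{m}\,\Bigl(\sup_{w\in D}\sum_j |f_w(\xi^j)|^p\Bigr)^{1/2} H^{(p-2)/(2p)}\,\max_j|f_y(\xi^j)-f_z(\xi^j)|.
\]
Here the inner factor is the square root of $m(1+Z)$. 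This gives the self-bounding relation
\[
\EE Z\le C\sqrt{\EE(1+Z)}\cdot \frac{H^{1/2-1/p}}{\sqrt m}\,\EE\gamma_2(D,\|\cdot\|_{\infty,\xi}).
\]

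\textbf{Step 2 (the $\gamma$-functional).} The core is bounding $\gamma_2(D,\|\cdot\|_{\infty,\xi})$, where $\|\cdot\|_{\infty,\xi}$ is the sup over the $m$ sample points. Dudley's integral with dual Sudakov / Maurey-type entropy estimates only yields extra powers of $\log$. Instead I would invoke the paper's sharper $\gamma$-functional bound for uniformly convex bodies. For $p>2$, the unit ball of $L_p$ is $p$-uniformly convex with modulus $\sim\varepsilon^p$, so Talagrand's uniformly-convex-set theorem bounds
\[
\gamma_{2}(D,d)\lesssim \Bigl(\sum_{n}\bigl(2^{n/2}e_n\bigr)^{q}\Bigr)^{1/q}
\]
with $q$ dual to $p$-convexity, in place of $\sum_n 2^{n/2}e_n$. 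Covering numbers of $D$ in $\|\cdot\|_{\infty,\xi}$ come from the Nikolskii embedding: $e_k\lesssim H^{1/p}\sqrt{\log m/k}$ for $k$ up to $N$, by dual Sudakov / Maurey empirical approximation, with geometric decay afterwards. Summing in $\ell_q$ across the $\log N$ scales where $e_k$ is flat replaces a factor $\log N$ by $(\log N)^{1/q}$-type quantities. After normalization this should give
\[
\EE\gamma_2\lesssim H^{1/p}\sqrt{\log H}\,(\log\log H)^{(p-1)/p}\cdot(\text{factors}).
\]
Inserting this into Step 1 yields $\EE Z\le \varepsilon/2$ once $m\ge C(\varepsilon^{-1}\log\varepsilon^{-1})^pH\log H(\log\log H)^{p-1}$. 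The $\log\varepsilon^{-1}$ and the $p$-th power arise from truncating the chain at scale $\varepsilon$.

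\textbf{Step 3 (deviation).} Use Talagrand's concentration inequality for suprema of empirical processes, or a moment version of the chaining bound, to upgrade to probability $1-2e^{-c\lambda}$. The summands are bounded by $H/m$, and their variance is at most $H/m^2$ times the mean. Absorbing $\lambda^p$ into $m$ gives the tail.

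\textbf{Main obstacle.} Step 2 is the hard part: the uniformly-convex $\gamma$-estimate must deliver exactly $(\log\log H)^{p-1}$, with entropy numbers in the random metric controlled uniformly. I would first try to split the scales at $\log\log H$ and apply Hölder across the dyadic blocks.
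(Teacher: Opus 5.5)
There is a genuine gap in Steps~1--2. The Lipschitz reduction of the symmetrized process to $\gamma_2(D,\|\cdot\|_{\infty,\xi})$ replaces $\max(|a_j|,|b_j|)^{p-2}$ by its worst case $H^{1-2/p}$ at every sample point. For $p>2$ this loses a polynomial factor, and no chaining argument in Step~2 can recover it.

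The entropy rate you quote, $e_k\lesssim H^{1/p}\sqrt{\log m/k}$, is the Euclidean ($2$-convex) rate. For the $L_p$ unit ball the correct rate is $e_n\lesssim H^{1/p}(\log m/2^n)^{1/p}$ (Lemma~\ref{cor-8-2} with $\theta=p$), and this does not sum against the weights $2^{n/2}$. Also, Lemma~\ref{lem-2-5} controls the $\ell_\theta$-type functional $\gamma_{\alpha,\theta}$, not $\gamma_2=\gamma_{2,1}$.

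Here is a concrete obstruction, from Example~\ref{limitations}, where $H=N$ and $D=N^{1/p}B_p^N$. Once the sample contains every point (which happens with high probability for $m\gtrsim N\log N$), choose $2^n\approx N$. Comparing volumes gives $e_n(B_p^N,\ell_\infty^N)\gtrsim N^{-1/p}$. Hence $\gamma_2(D,\|\cdot\|_{\infty,\xi})\ge 2^{n/2}e_n\gtrsim\sqrt N$, far above your target $H^{1/p}\sqrt{\log H}$. Your self-bounding inequality can then give at best $\EE Z\lesssim N^{1-1/p}/\sqrt m$, so it requires $m\gtrsim N^{2-2/p}$.

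The paper instead keeps Rademacher multipliers and uses their $\ell_{p'}$ tail $\PP(|\sum_j a_j\varepsilon_j|\ge t\|a\|_{p'})\le 2e^{-c(p)t^p}$ (Lemma~\ref{TailsEst}; Gaussian multipliers do not have this tail). This gives $\EE Z\lesssim m^{-1}\EE\,\gamma_{p,1}(T_p(F(\pmb\xi)),\|\cdot\|_{p'})$, whose weights $2^{n/p}$ match the $p$-convex entropy decay. Even then, the naive comparison $\gamma_{p,1}(T_p(G),\|\cdot\|_{p'})\le 2p\sup_{x\in G}\|x\|_p^{p-1}\gamma_{p,1}(G,\|\cdot\|_\infty)$ only reproduces the old bound $H(\log H)^p$.

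The missing idea is Theorem~\ref{thmGammaBound}. It bounds $\gamma_{p,1}(T_p(G),\|\cdot\|_{p'})$ by $\bigl(bn_0+b^2\alpha_0^{1/p}+(2^{n_0}\alpha_0)^{-1/p}\log\frac{m}{m_0}\bigr)\sup_{x\in G}\|x\|_p^p$ plus $b^{1-p_0}\sup_{x\in G}\|x\|_\infty^{p-p_0}\bigl(m_0^{p_0/p}\diam(G,\|\cdot\|_{p_1})^{p_0}+\gamma_{p,p_0}(G,\|\cdot\|_{p_1})^{p_0}\bigr)$. Its proof uses van Handel's contraction principle with $K$-functionals, splitting coordinates into a small set $I$ of large entries and a thresholded remainder (Lemmas~\ref{lem-3-3} and~\ref{lem-3-4}).

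Taking $p_0=p$ (so $p_1=\infty$), $p$-convexity gives $\gamma_{p,p}(X_N^p,\|\cdot\|_{L_\infty(\pmb\xi)})\lesssim(H\log m)^{1/p}$ via Lemmas~\ref{lem-2-5} and~\ref{cor-8-2}. Optimizing $b,\alpha_0,n_0$ in a self-bounding argument much like yours (proof of Theorem~\ref{Th-main}) then produces exactly the factor $(\log\log H)^{p-1}$.

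Your Step~3 is fine in principle; the paper uses Talagrand's moment inequality (Lemma~\ref{thm-9-3}) instead. Note also that $X_N$ may be complex-valued, which the paper handles by passing to $\{0,1\}\times\Omega$.
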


In particular, for $X_N\in \mathrm{NI}_{p,\infty}(KN)$, this improves the previously known bounds~\eqref{eq-p-1} and~\eqref{eq-p-2}, which involved polynomial losses in $\log(KN)$, to
\[
m \asymp KN\log(KN)(\log\log(KN))^{p-1},
\]
which is optimal up to the factor $(\log\log(KN))^{p-1}$ in the \emph{random-sampling} setting.

We also note that, up to the factor $(\log\log(KN))^{p-1}$,
Theorem~\ref{Th-Nik} recovers the bound~\eqref{p-under-2-nik} for spaces
$X_N \in \textnormal{NI}_{2,\infty}(K N)$. Indeed, in this case
\[
\|f\|_\infty \le (K N)^{1/2}\|f\|_{L_2(\mu)}
\le \bigl((K N)^{p/2}\bigr)^{1/p}\|f\|_{L_p(\mu)},
\]
so that $X_N \in \textnormal{NI}_{p,\infty}\bigl((K N)^{p/2}\bigr)$.

\subsection{Restricted isometry property}

Another sampling discretization problem considered in this paper is related to the construction of matrices with the {\it restricted isometry property} (RIP), a central notion in compressed sensing introduced by Cand\`es and Tao~\cite{CT05} (see also~\cite{CRT06}). Recall that an $m\times N$ matrix $A$ is said to satisfy the RIP of order $s$ with constant $\varepsilon\in(0,1)$ if
\begin{equation}\label{RIP}
  (1-\varepsilon)\|{\bf a}\|_{\ell_2^N}^2
  \le \|A{\bf a}\|_{\ell_2^m}^2
  \le (1+\varepsilon)\|{\bf a}\|_{\ell_2^N}^2,
  \quad \forall\, {\bf a}\in\CC^N \text{ with } \|{\bf a}\|_0\le s,
\end{equation}
where $\|{\bf a}\|_0:=|\supp {\bf a}|$ denotes the number of nonzero coordinates of the vector ${\bf a}$.

A common approach (see \cite{FR13}*{Chapter~12}) to construct such matrices is to sample from a bounded orthonormal system
$\mathcal D_N=\{\varphi_1,\dots,\varphi_N\}\subset B(\Omega)$ in $L_2(\mu)$ satisfying
\begin{equation}\label{cond-bound}
  \|\varphi_j\|_\infty \le K_0, \quad j=1,\dots,N.
\end{equation}
More precisely, the central problem is to determine how many random samples
$\xi^1,\ldots,\xi^m$ are required so that the sampled matrix
\begin{equation}\label{matrix}
A=\frac1{\sqrt m}
\begin{pmatrix}
\varphi_1(\xi^1) & \cdots & \varphi_N(\xi^1)\\
\varphi_1(\xi^2) & \cdots & \varphi_N(\xi^2)\\
\vdots & \ddots & \vdots\\
\varphi_1(\xi^m) & \cdots & \varphi_N(\xi^m)
\end{pmatrix}
\end{equation}
satisfies~\eqref{RIP} with high probability.

Rudelson and Vershynin \cite{RV08} proved that this holds whenever
\[
m \ge C(K_0,\varepsilon)\, s \log N \, (\log s)^2 \, \log(s \log N),
\]
and Haviv and Regev~\cite{HR17} {(see also \cite{BDJR})} later improved this to
\begin{equation}\label{eq-HR}
m \ge C(K_0,\varepsilon)\, s\log N\,(\log s)^2.
\end{equation}
Some intermediate results can be found in~\cites{Bou14, ChGV}.

For an orthonormal dictionary $\mathcal D_N$, the RIP for a matrix of the form~\eqref{matrix} is equivalent to sampling discretization of the $L_2$ norm on the class of $s$-sparse functions. Indeed, if ${\bf a}\in\CC^N$ and
\[
f=\sum_{j=1}^N a_j\varphi_j,
\]
then $A{\bf a}$ is the vector of sampled values $(f(\xi^1),\dots,f(\xi^m))$ up to the factor $m^{-1/2}$, and~\eqref{RIP} becomes
\[
(1-\varepsilon)\|f\|_{L_2(\mu)}^2
\le \frac1m\sum_{j=1}^m |f(\xi^j)|^2
\le (1+\varepsilon)\|f\|_{L_2(\mu)}^2,
\quad \forall\, f\in \Sigma_s(\mathcal D_N),
\]
where
\[
\Sigma_s(\mathcal D_N)
:=
\bigcup_{\substack{J\subset \{1,2,\ldots, N\} \\ |J|=s}} \mathrm{span}\{\varphi_j\colon j\in J\}.
\]
This identifies RIP as a special case of universal sampling discretization.

\subsection{Universal sampling discretization}

The RIP formulation above suggests a natural extension in which the $L_2$ norm is replaced by the $L_p$ norm. This leads to a universal version of the Marcinkiewicz discretization problem: instead of discretizing the norm on a fixed finite-dimensional space, one seeks a single random sample that simultaneously discretizes the $L_p$ norm over a family of finite-dimensional spaces generated by sparse subsets of a fixed finite dictionary. Problems of this type have recently been studied in~\cites{DTM1, DTM2, KosTem}.

To move beyond the orthogonal setting, we replace orthogonality by a one-sided $s$-sparse Riesz-type condition.
Namely, we say that a dictionary $\mathcal D_N=\{\varphi_1,\dots,\varphi_N\}\subset B(\Omega)$
is a uniformly bounded one-sided $s$-sparse Riesz system with constant $K\ge1$ if
\[
\|\varphi_j\|_\infty \le 1, \quad j=1,\dots,N,
\]
and
\beq \label{riesz}
\sum_{j=1}^N |a_j|^2
\le
K\Bigl\|\sum_{j=1}^N a_j\varphi_j\Bigr\|_{L_2(\mu)}^2,
\quad
\forall\, {\bf a}=(a_1,\dots,a_N)\in\CC^N
\ \text{with }\|{\bf a}\|_0\le s.
\enq
In particular, if $\{\varphi_1, \ldots, \varphi_N\}$ is an orthonormal system satisfying~\eqref{cond-bound}, then the rescaled system
$
\mathcal{D}_N:=\{\tfrac{1}{K_0}\varphi_1, \ldots, \tfrac{1}{K_0}\varphi_N\}
$
forms a uniformly bounded $s$-sparse Riesz system with constant $K=K_0^2$.
We note that this condition can be viewed as a variant of the Nikolskii-type inequality $\textnormal{NI}_{2,\infty}(\sqrt{Ks})$, since it implies that
\[
\|f\|_\infty\le \sqrt{Ks}\|f\|_{L_2(\mu)}
\quad \forall\, f\in \Sigma_s(\mathcal{D}_N).
\]

The universal sampling discretization problem asks for conditions on $m$ under which, for i.i.d.\!\! random points $\xi^1,\dots,\xi^m\in\Omega$, one has
\begin{equation}\label{2-3-1}
  (1-\varepsilon)\|f\|_{L_p(\mu)}^p
  \le \frac1m\sum_{j=1}^m |f(\xi^j)|^p
  \le (1+\varepsilon)\|f\|_{L_p(\mu)}^p,
  \quad
  \forall\, f\in \Sigma_s(\mathcal D_N),
\end{equation}
with high probability.
The non-Euclidean $L_p$ case with $p\ne 2$ is, as usual, significantly more challenging than the $L_2$ case. The best previously known  result in this direction was obtained in~\cite{DTM2}, where the Rudelson--Vershynin theorem was extended from $p=2$ to the full range $p\in[1,2]$. More precisely, it was shown there that when
\begin{equation}\label{eq-univ-m}
m \ge C(p) K s \log N\, (\log (Ks))^2 \, \log(Ks \log N),
\end{equation}
then $m$ random samples suffice to guarantee~\eqref{2-3-1} for a uniformly bounded $s$-sparse Riesz system with constant $K\ge1$. Our next result improves this bound.

\begin{theorem}\label{Th-univ}
Let $(\Omega,\mathcal F,\mu)$ be a probability space, and let
$\xi^1,\ldots,\xi^m$ be independent random points in $\Omega$ satisfying
\eqref{probab-cond}.
Then, for any  $p \in [1,2]$,
there exists a constant $c = c(p)>0$
such that for
all integers $N\ge4$ and $s \in [4,N]$, every uniformly bounded $4s$-sparse Riesz system
$\mathcal D_N$ in $L_2(\Og, \mu)$
with constant $K \ge 16$, and all  parameters  $\lambda \ge e$ and
$\varepsilon \in (0,\tfrac12]$, the universal Marcinkiewicz discretization inequality~\eqref{2-3-1} holds  with probability
at least
$
1 - 2 e^{-c\lambda (\log\lambda)^{1/2}}
$
provided that the sample size $m$ satisfies
\[
m \ge \bigl(\lambda\,\varepsilon^{-1}\bigr)^2
(\log \varepsilon^{-1})\,
K s\,\log N\,\log s\,\log(K s)\,\log\log(Ks).
\]
\end{theorem}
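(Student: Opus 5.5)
We derive Theorem~\ref{Th-univ} from the main deviation estimate for empirical $L_p$ moments announced in the abstract (the bound on $\EE\sup_{y\in D}|\frac1m\sum_j|\la\pmb\vi^j,y\ra|^p-\EE|\la\pmb\vi,y\ra|^p|$ through a refined $\ga$-functional bound). We take $\pmb\vi=(\vi_1(\xi),\dots,\vi_N(\xi))$, so that $f=\sum a_k\vi_k$ satisfies $f(\xi)=\la\pmb\vi,{\bf a}\ra$. The index set is
\[
D=\{{\bf a}:\|{\bf a}\|_0\le s,\ \|f\|_{L_p(\mu)}\le1\}.
\]
By homogeneity, \eqref{2-3-1} is equivalent to the supremum of the deviation over $D$ being at most $\va$. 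Non-identically distributed points satisfying \eqref{probab-cond} are handled as in the main estimate: symmetrization only uses independence and the averaged law.

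\textbf{Step 1: geometric reduction.} On $\Sigma_s$ we compare norms. For $p\le2$, Hölder and \eqref{riesz} give
\[
\|f\|_{L_2}^2\le\|f\|_\infty^{2-p}\|f\|_{L_p}^p\le (\sqrt{Ks}\,\|f\|_{L_2})^{2-p}\|f\|_{L_p}^p,
\]
hence $\|f\|_{L_2}\le (Ks)^{(2-p)/(2p)}\|f\|_{L_p}$, and $\|{\bf a}\|_2\le\sqrt K\|f\|_{L_2}$. Consequently $D\subset R\cdot B_{\ell_2}\cap\Sigma_s$ with the appropriate $R$, and $D\subset \sqrt{s}R\, B_{\ell_1}$. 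The sup-norm on $\Sigma_s$ is controlled by the Riesz condition. The $4s$-sparse hypothesis is used so that differences and sums of elements of $D$ (supports of size $\le 2s$, and $4s$ after a second combination) still satisfy \eqref{riesz}.

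\textbf{Step 2: chaining.} Conditionally on the sample, we bound the Rademacher process $\sum\va_j|f(\xi^j)|^p$ by the $\ga_2$-functional of $D$ in the random metric
\[
d(f,g)=\Bigl(\sum_j \bigl||f(\xi^j)|^p-|g(\xi^j)|^p\bigr|^2\Bigr)^{1/2}.
\]
Using $||a|^p-|b|^p|\le p|a-b|\max(|a|,|b|)^{p-1}$ together with Hölder, this metric is dominated by $\max_j|f(\xi^j)-g(\xi^j)|^{\theta}$ times a power of $\sup_{f\in D}\frac1m\sum_j|f(\xi^j)|^p$. This is the standard Talagrand/DKT device for $1\le p<2$, where the uniform convexity of $L_p$-type balls enters. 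The paper's sharpened $\ga$-bound replaces Dudley's sum: it estimates $\ga$ through the covering numbers of $\sqrt s\,B_{\ell_1}$ in the empirical sup-norm. These covering numbers are given by Maurey's empirical method, $\log\mathcal N(t)\lesssim t^{-2}s\log N\log m$, with small-scale volumetric estimates on each sparse block. Integrating the entropy (Dudley) would produce the extra $\log(Ks\log N)$ factor of \eqref{eq-univ-m}. The improved $\ga$-estimate should instead yield $\log s\,\log(Ks)\,\log\log(Ks)$, with the $\log\log$ coming from the number of dyadic scales between $1/\sqrt{Ks}$ and $1$ that are treated crudely.

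\textbf{Step 3: bootstrap and tail.} Write $E=\EE\sup_D|\cdot|$. Step 2 gives an inequality of the form $E\le \sqrt{\Lambda/m}\,(1+E)^{\al}$ with $\al<1$ and $\Lambda=Ks\log N\log s\log(Ks)\log\log(Ks)$. Solving, $E\le\va/\ld$ once $m\ge C\ld^2\va^{-2}\Lambda$; the $\log\va^{-1}$ factor arises from truncating the chain at scale $\va$. For the high-probability statement we apply Talagrand's concentration for suprema of bounded empirical processes, or a moment version of the chaining bound. The bounds needed are the envelope $\sup_D\|f\|_\infty^p\lesssim (Ks)^{\cdot}$ and the weak variance. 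These give a tail $2e^{-c\ld\sqrt{\log\ld}}$, where the $\sqrt{\log\ld}$ reflects the Bernstein regime of the concentration inequality.

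\textbf{Main obstacle.} The hardest step is Step 2: proving that the refined $\ga$-functional bound, applied to $\sqrt s B_{\ell_1}$ in the empirical sup metric, loses only $\log s\log\log(Ks)$ rather than $\log(Ks\log N)$. I would first try to split the chain at scale $(Ks)^{-1/2}$. Above this scale I would use Maurey coverings in the refined $\ga$-sum, and below it $s$-dimensional volumetric coverings. I would then check that the refined bound's gain exactly cancels the $\log m$ from Maurey.
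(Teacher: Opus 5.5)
There is a genuine gap. The step you label the main obstacle is in fact the whole proof, and the route you sketch for it cannot give the stated bound. Three things go wrong.

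(a) $\Sigma_s^p(\mathcal D_N)$ is a union of $\binom{N}{s}$ balls, not a convex set. So no uniform-convexity input of the type of Lemma~\ref{lem-2-5} is available, and the ``DKT device'' you invoke does not apply as stated.

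(b) Your Step~1 inclusion $D\subset\sqrt s R\,B_{\ell_1}$ has radius $\sqrt s R=(Ks)^{1/p}$, so Maurey's method gives entropy numbers decaying only like $2^{-n/2}$. For $p<2$ these are too large by a factor of order $s^{1/p-1/2}$ at the scales $2^n\approx s\log N$. Once the scales are weighted by $2^{n/p}$, this becomes a polynomial, not logarithmic, loss in $s$.

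(c) Covering in the empirical sup-norm $L_\infty(\pmb\xi)$ brings in Maurey's factor $\log m\ge\log(Ks\log N)$. That is exactly the loss in \eqref{eq-univ-m} which the theorem removes. Nothing in the refined chaining bound ``cancels'' it: even Theorem~\ref{thmGammaBound} with $p_1=\infty$ would still leave a factor $\log m$ in place of $\log(Ks)$. As a result, the value of $\Lambda$ in your Step~3 is asserted, not derived.

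The missing idea is to chain in $L_{p_1}(\pmb\xi)$ with $p_1\asymp\log(Ks)$ rather than in $L_\infty(\pmb\xi)$.
\begin{itemize}
\item The paper writes $\Er_p(F,\pmb\xi)=\Er_2(T_{p/2}(F),\pmb\xi)$ for $F=\Sigma_s^p(\mathcal D_N)$. It then applies Lemma~\ref{lem-log} (which rests on Theorem~\ref{thmGammaBound} with exponent $2$) with $p_0=\frac{2}{1+\da}$ and $\da=\frac{1}{3\log(Ks)}$. The weight $\sup_F\|f\|_\infty^{p-pp_0/2}$ then costs only $(Ks)^{O(\da)}=O(1)$.
\item Lemma~\ref{lem-6-2} transfers $\ga_{2,p_0}\bigl(T_{p/2}(F),\|\cdot\|_{L_{p_1}(\pmb\xi)}\bigr)$ to $\bigl[\ga_{p,pp_0/2}(F,\|\cdot\|_{L_q(\pmb\xi)})\bigr]^{p/2}$ with $q=p/\da$.
\item Maurey's lemma in $L_q$ (Lemma~\ref{lem-ent-est}), applied to $\Sigma_{2s}^2\subset\sqrt{2Ks}\,\mathcal A_1(\mathcal D_N)$, gives $e_n\lesssim(qKs\log N/2^n)^{1/2}$. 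Here $q\asymp\log(Ks)$ replaces $\log m$.
\item The interpolation Lemma~\ref{lem-ent-trans} converts this into $e_n(\Sigma_s^p,L_q)\lesssim(qKs\log N/2^n)^{1/(2\theta)}$ with $\theta\approx p/2$. This is what handles $p<2$ and the non-convexity.
\item A volumetric bound in $\|\cdot\|_\infty$ takes over for $2^n\gtrsim s\log N$.
\item The roughly $\log_2 s$ intermediate scales are summed in $\ell_{pp_0/2}$ rather than $\ell_1$. Each contributes, after the weight, $O\bigl((Ks\,\da^{-1}\log N)^{p_0/2}\bigr)$. This gives $H=Ks$, $\alpha=0$ and $m_0\asymp\log s\,\log(Ks)\,\log N$.
\end{itemize}
Your heuristics for the remaining factors are also off. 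The $\log\log(Ks)$ is the factor $(\log\log\frac{m}{m_0})^{p/p_0'}$ from Theorem~\ref{Th-main}, not a count of crudely treated scales. The $\sqrt{\log\ld}$ is $(\log\ld)^{1/p_0}$ from Remark~\ref{rem-probab-est}. Your concentration argument for the tail could plausibly be run once an expectation bound is available, but that bound is precisely what is missing.
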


Compared with~\eqref{eq-univ-m}, Theorem~\ref{Th-univ} reduces the logarithmic loss from
$
(\log(Ks))^2\,\log(Ks\log N)
$
to
$
\log s\,\log(Ks)\,\log\log(Ks),
$
thus
improving the overall dependence both on $s$ and $N$. Moreover, our bound differs from the best known $L_2$ sample complexity~\eqref{eq-HR} only by an additional factor of $\log\log s$.

\subsection{Notation}
Throughout the paper we set $\NN_0=\mathbb{N}\cup\{0\}$.
For $p\in(1,\infty)$, we denote by
$
p'=\frac{p}{p-1}
$
the conjugate exponent of $p$, and as usual we set $p'=\infty$ for $p=1$. We also adopt the convention that whenever an expression of the form
$
\frac{pp_0}{p-p_0}
$
appears with $p=p_0$, it is understood to be equal to $\infty$.

For a set of functions $F\subset B(\Omega)$ and $r\in(0,\infty)$, we define
\[
T_r(F):=\{|f|^r\colon f\in F\}.
\]
We identify a vector $x\in\CC^m$ with the function on $\{1,\dots,m\}$ given by $j\mapsto x(j)$. Accordingly, for $0<r<\infty$ and $G\subset\CC^m$, we write
\[
|x|^r := \bigl(|x(1)|^r,\ldots,|x(m)|^r\bigr)
\qquad\text{and}\qquad
T_r(G):=\{|x|^r\colon x\in G\}.
\]
For $1\le q\le \infty$, we denote by $\|\cdot\|_q$ the standard $\ell_q^m$ norm on $\CC^m$.
For a sample $\pmb\xi=(\xi^1,\dots,\xi^m)$, we define the associated normalized  discrete $\ell_q$ norm by
\begin{equation}\label{1-9-2025}
  \|f\|_{L_q(\pmb\xi)}
  :=
  \begin{cases}
    \Bigl(\f1m\sum\limits_{j=1}^m |f(\xi^j)|^q\Bigr)^{1/q}, & 1\le q<\infty,\\[1em]
    \max\limits_{1\le j\le m}|f(\xi^j)|, & q=\infty.
  \end{cases}
\end{equation}
For a set $F\subset B(\Omega)$, we define
\beq\label{1-16} F(\pmb\xi) := \bigl\{(f(\xi^1),\ldots,f(\xi^m))\colon f\in
F\bigr\}\subset\CC^m. \enq

Throughout the paper, $C,c>0$ denote positive absolute constants whose values may change from line to line. When several such constants appear in the same argument, we distinguish them by subscripts, writing $C_1,C_2$, etc. These subscripts are only labels and do not indicate dependence on parameters. Such dependence is indicated explicitly, for instance by writing $C(p)$ when dependence on the parameter $p$ is involved.

\subsection{Organization of the paper}

The remainder of the paper is organized as follows.
In Section~\ref{sec2}, we introduce the probabilistic framework, discuss the connection with empirical moments of random vectors, and explain the limitations of the sole Nikolskii assumption.
In Section~\ref{sec3}, we formulate our main abstract discretization theorem, Theorem~\ref{Th-main}, the key chaining estimate, Theorem~\ref{thmGammaBound}, and its application to $\theta$-convex indexing sets, Theorem~\ref{Th-Conv}.

Section~\ref{sec-prelim} collects preliminary material on entropy numbers and Talagrand's generic chaining theory.
In Section~\ref{sec:4}, we present the main steps in the proof of Theorem~\ref{Th-main}, reducing it to the technical chaining estimate of Theorem~\ref{thmGammaBound}.
Section~\ref{sec5} is devoted to the derivation of Theorems~\ref{Th-Conv} and~\ref{Th-Nik}, while Section~\ref{universal} contains the proof of Theorem~\ref{Th-univ}.

In Section~\ref{sec6}, we review the aspects of van Handel's approach to chaining needed in the sequel and prove a finite-dimensional version of the contraction principle.
Finally, Section~\ref{sec:proof-thmGammaBound} contains the proof of Theorem~\ref{thmGammaBound}, which is the main technical part of the paper.

\section{Probabilistic setting and connection with empirical moments}\label{sec2}

\subsection{Probabilistic framework}

Let $\pmb\xi:=(\xi^1,\ldots,\xi^m)$ be a sequence of independent random
points in $\Omega$ satisfying condition~\eqref{probab-cond}, that is,
\[
\mu(E)=\frac{1}{m}\sum_{j=1}^m \PP[\xi^j\in E],
\quad E\in\mathcal F.
\]
Given a subclass $F\subset B(\Omega)$, we define the discretization error by
\[
\Er_p(F,\pmb\xi)
:=
\sup_{f\in F}
\biggl|
\frac1m\sum_{j=1}^m |f(\xi^j)|^p
-
\|f\|_{L_p(\mu)}^p
\biggr|.
\]
Thus, for
\[
X_N^p:=\{f\in X_N\colon\|f\|_{L_p(\mu)}\le 1\},
\]
the Marcinkiewicz discretization inequality~\eqref{mz} holds if and only if
\[
\Er_p(X_N^p,\pmb\xi)\le \varepsilon.
\]
Our goal is to estimate the minimal sample size $m$ for which $\Er_p(X_N^p,\pmb\xi)$ is small, either in expectation or with high probability. More precisely, given $\varepsilon,\delta\in(0,1)$, one seeks conditions ensuring that
\[
\EE\,\Er_p(X_N^p,\pmb\xi)\le \varepsilon
\quad\text{or}\quad
\PP\bigl[\Er_p(X_N^p,\pmb\xi)>\varepsilon\bigr]\le \delta.
\]

\subsection{Random sampling and empirical moments}
\label{subsect-moments}

The probabilistic Marcinkiewicz discretization problem is closely related to the problem of approximating moments of a random vector by empirical moments. Let $\pmb\varphi\in\mathbb{R}^N$ be an $N$-dimensional random vector, and let $\pmb\varphi^1,\dots,\pmb\varphi^m$ be independent copies of $\pmb\varphi$. For a set $B\subset\RR^N$, consider
\[
V_p(B):=
\sup_{y\in B}
\biggl|
\frac1m\sum_{j=1}^m |\langle \pmb\varphi^j,y\rangle|^p
-
\EE |\langle \pmb\varphi,y\rangle|^p
\biggr|.
\]
The problem of controlling $V_p(B)$ with high probability has been extensively studied in high-dimensional probability. The most classical case corresponds to $p=2$ and $B$ being the Euclidean unit ball, which is equivalent to approximating the covariance matrix by the empirical covariance matrix in operator norm (see, for instance,~\cites{MP12, MP14, Tikh18, SV13, Versh12}). More general values of $p$ and more general index sets $B$ have been studied to a much lesser extent, with only a few works addressing this level of generality, typically under some additional assumptions (see, for instance,~\cites{Versh11, AV26, GM, GR, ALPT-J}).

The connection with probabilistic Marcinkiewicz discretization becomes immediate after choosing a basis of $X_N$. For simplicity, we assume that $X_N$ consists  solely  of real-valued functions.  Let $\varphi_1,\dots,\varphi_N$ be  a basis of $X_N$, and define the random vector
\[
\pmb\varphi := (\varphi_1,\dots,\varphi_N)\colon (\Omega,\mathcal F,\mu)\to\mathbb{R}^N.
\]
If $\xi^1,\dots,\xi^m$ are i.i.d.\ random {points in $\Og$}  with distribution $\mu$, then
\[
\pmb\varphi^j:=\bigl(\varphi_1(\xi^j),\dots,\varphi_N(\xi^j)\bigr),
\quad j=1,\dots,m,
\]
are independent copies of $\pmb\varphi$. Moreover,
\[
\Er_p(F,\pmb\xi)=V_p(B),
\]
where
\[
B:=\Bigl\{y=(y_1,\dots,y_N)\in\RR^N\colon \sum_{k=1}^N y_k\varphi_k\in F\Bigr\}.
\]
Thus, probabilistic Marcinkiewicz discretization is equivalent to the empirical
moment problem for random vectors associated with $X_N$, and results obtained
in either language can be readily translated into the other.

However, the standard assumptions used in the empirical moment literature are not well suited to the regime $p>2$ considered in this paper. Typically, one assumes an almost sure bound
\begin{equation}\label{diam}
  \|\pmb\varphi\|_{\ell_2^N}\le \sqrt{KN}
\end{equation}
together with a moment-comparison estimate of the form
\begin{equation}\label{moments}
  \bigl(\EE|\langle \pmb\varphi,y\rangle|^q\bigr)^{1/q}
  \le
  M\bigl(\EE|\langle \pmb\varphi,y\rangle|^2\bigr)^{1/2},
  \qquad \forall\, y\in\RR^N,
\end{equation}
for some $q\ge p$. When the basis $\{\varphi_k\}_{k=1}^N$ is orthonormal in $L_2(\mu)$, condition~\eqref{diam} corresponds to the usual $(2,\infty)$-Nikolskii inequality $X_N\in \mathrm{NI}_{2,\infty}(KN)$. By contrast, condition~\eqref{moments} translates into the norm-equivalence assumption
\[
\|f\|_{L_p(\mu)}\le\|f\|_{L_{q}(\mu)}\le M\|f\|_{L_2(\mu)},\quad \forall\, f\in X_N,
\]
which is too restrictive for the discretization problem when $p>2$: it fails in many classical spaces, such as spaces of trigonometric or algebraic polynomials, and, more importantly, it is precisely this condition that forces the required number of sampling points to grow polynomially in the dimension (see~\eqref{eq-lower}).

For this reason, we do not impose any moment-comparison assumptions. Instead, we work under the more flexible $(p,\infty)$-Nikolskii condition $X_N\in\mathrm{NI}_{p,\infty}(H)$ and seek high-probability discretization bounds in terms of the Nikolskii constant $H$. It is worth mentioning that this framework still contains the classical $(2,\infty)$ setting, since
\[
\mathrm{NI}_{2,\infty}(KN)\implies \mathrm{NI}_{p,\infty}\bigl((KN)^{p/2}\bigr).
\]

\subsection{Limitations of the \texorpdfstring{$(p,\infty)$}{(p,infinity)}-Nikolskii assumption}

The generality of the $(p,\infty)$-Nikolskii assumption comes with an intrinsic limitation: in this setting, one cannot in general obtain probabilistic Marcinkiewicz discretization with fewer than $\mathcal{O}(N \log N)$ random samples. This already appears in the model case of the uniform measure on an $N$-point set.

\begin{example}\label{limitations}
Let $\Omega := \{1,\ldots,N\}$, let $X_N := \{f\colon \Omega \to \mathbb{R}\} = \mathbb{R}^N$, and let $\mu(\{j\}) = \frac{1}{N}$ for all $j \in \Omega$. Then
\[
\|f\|_{L_p(\mu)}
= \Bigl(\frac{1}{N}\sum_{j=1}^N |f(j)|^p\Bigr)^{1/p}
\quad\text{and}\quad
\|f\|_\infty \le N^{1/p}\|f\|_{L_p(\mu)},
\]
so that $X_N \in \textnormal{NI}_{p,\infty}(N)$ for every $p \in [1,\infty)$.

Let $\xi^1,\xi^2,\ldots$ be independent random variables distributed according to $\mu$, and define $T$ to be the smallest positive integer $m$ such that
$\{1,\ldots,N\} \subset \{\xi^1,\ldots,\xi^m\}$.
Clearly, for every fixed $m\in\NN$,
\[
\Er_p(X_N^p,\pmb{\xi})
:= \sup_{\|f\|_{L_p(\mu)} \le 1}
\biggl| \frac{1}{m}\sum_{j=1}^m |f(\xi^j)|^p - \|f\|_{L_p(\mu)}^p \biggr|
\ge \ind_{\{T>m\}}.
\]
Hence, for every $\varepsilon \in (0,1)$,
\[
\PP\bigl[\Er_p(X_N^p,\pmb{\xi}) \le \varepsilon\bigr]
\le \PP(T \le m).
\]
This implies that for every $\delta \in (0,1)$ and every $m \le \delta N\log N$,
\[
\PP\bigl[\Er_p(X_N^p,\pmb{\xi}) \le \varepsilon\bigr]
\le \PP\bigl(|T-\EE( T)| \ge \EE (T) - m\bigr)
\le \tfrac{\operatorname{Var}(T)}{(\mathbb{E}(T) - \delta N\log N)^2}
\xrightarrow[N\to\infty]{} 0,
\]
where we used the classical coupon collector estimates
\[
\EE (T) = N\log N + O(N)
\quad\text{and}\quad
\operatorname{Var}(T) = O(N^2).
\]
Thus, under the sole assumption $X_N\in \textnormal{NI}_{p,\infty}(N)$, one cannot in general expect probabilistic Marcinkiewicz discretization with fewer than $\mathcal{O}(N \log N)$ random samples.
\end{example}

\section{The main abstract results}
\label{sec3}

\subsection{A general estimate in terms of the chaining functional}\label{subsect-main}

Our main abstract discretization theorem bounds the expected uniform discretization error $\Er_p(F,\pmb{\xi})$ of the $L_p$ norm  over an arbitrary function set $F$ containing $0$, in terms of Talagrand's chaining functional (see Definition~\ref{def-Tal})
\[
\gamma_{p,p_0}\Bigl(F,\|\cdot\|_{L_{p_1}(\pmb{\xi})}\Bigr)
:= m^{\f 1p-\f 1{p_0}}
\gamma_{p,p_0}\Bigl(F(\pmb{\xi}),\|\cdot\|_{p_1}\Bigr),
\]
where $p\ge 2$, $p_0\in(1, p]$, $p_1:=\f{pp_0}{p-p_0}$, and $F(\pmb{\xi})$ is defined in \eqref{1-16}.

To formulate the theorem, let $(\Omega,\mathcal F,\mu)$ be a probability space,
and let $\pmb{\xi}:=(\xi^1,\ldots,\xi^m)$ be a {collection} of $m$ independent
random elements {taking values in $\Omega$} and satisfying
condition~\eqref{probab-cond}.

\begin{theorem}\label{Th-main}
Let $2\le p<\infty$, and let $F$ be a class of bounded functions on $\Omega$
which contains $0$ and satisfies the following two conditions with parameters
$p_0\in(1,p]$, $H>0$, $m_0\in\mathbb N\cap[1,m]$, and $1\le q<\infty$:
\begin{equation}\label{1-11a-0}
\Bigl(\mathbb{E}\,
\Bigl(
\sup_{f\in F}\|f\|_{L_\infty( \pmb\xi)}^{p-p_0}
\cdot \sup_{g\in F}\|g\|_{L_{p_1}(\pmb{\xi})}^{p_0}
\Bigr)^q
\Bigr)^{1/q}
\le H^{p_0/p},
\end{equation}
 \begin{equation}\label{1-11a}
\mathbb{E}\,
\Bigl[\sup_{f\in F}\|f\|_{L_\infty(\pmb\xi)}^{p-p_0}\cdot \Bigl(\gamma_{p,p_0}\Bigl(F,\|\cdot\|_{L_{p_1}(\pmb{\xi})}\Bigr)\Bigr)^{p_0 }\Bigr]
\le (H m_0)^{p_0/p},
\end{equation}
where  $p_1:=\f {pp_0}{p-p_0}$.
Then there exists a constant $C=C(p)>0$ such that, whenever
\begin{equation}\label{1-11b}
m \ge \max\Bigl\{16m_0,\ e\bigl(e\log\log \tfrac{m}{m_0}\bigr)^{p/p_0'} Hm_0\Bigr\},
\end{equation}
one has
\[
\Bigl(\mathbb{E}\, \bigl|\Er_p(F,\pmb{\xi})\bigr|^q \Bigr)^{1/q}
\le \frac{C q}{1+\log q}\, \Theta,
\]
where
\[
\Theta :=
\Bl(\f {H m_0} m\Br)^{\f1p} \Bl(\log\log \frac{m}{m_0} + \log \f m{H m_0}\Br)^{1-\f1{p_0}}\Bigl(1+\sup\limits_{f \in F} \|f\|_{L_p(\mu)}^p\Bigr).
\]
Moreover, if~\eqref{1-11a-0} holds for all $ 1\le q<\infty$, then one  has
\begin{equation}\label{eq-probab-est-main}
\mathbb{P}\bigl[\Er_p(F,\pmb{\xi}) > C\,\Theta\, t\bigr]
\le \exp\Bigl(-\frac{t\log t}{e}\Bigr),
\quad \forall\, t \ge e.
\end{equation}
\end{theorem}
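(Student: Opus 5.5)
We follow the standard symmetrization--chaining scheme and reduce everything to a bound for the conditional Rademacher process. The organization paper says Section~\ref{sec:4} reduces Theorem~\ref{Th-main} to the chaining estimate Theorem~\ref{thmGammaBound}, so we assume that estimate is available in roughly the following form. For a fixed sample and a set $G\subset\CC^m$ containing $0$,
\[
\EE_\varepsilon\sup_{x\in G}\Bigl|\sum_j \varepsilon_j |x(j)|^p\Bigr|
\lesssim \sup_{x\in G}\|x\|_\infty^{p-p_0}\,\gamma_{p,p_0}(G,\|\cdot\|_{p_1})^{p_0}\,(\ldots)
\]
with additional logarithmic factors.

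\textbf{Step 1: symmetrization.} Set $E:=(\EE\,\Er_p^q)^{1/q}$. By \eqref{probab-cond} and independence, Giné--Zinn symmetrization for non-identically distributed points gives
\[
E\le 2\Bigl(\EE\,\sup_{f\in F}\Bigl|\frac1m\sum_j\varepsilon_j|f(\xi^j)|^p\Bigr|^q\Bigr)^{1/q}.
\]

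\textbf{Step 2: conditional estimate.} Fix $\pmb\xi$. We decompose the chain at a level $n_0\approx\log_2(m/m_0)$ and separate the process into two parts.
\begin{itemize}
\item The \emph{coarse part} uses the first $n_0$ chaining levels. Writing $|x|^p-|y|^p$ and applying Hölder with exponents $p/(p-p_0)$ and $p/p_0$, the sub-Gaussian increments of $\sum\varepsilon_j(|x_j|^p-|y_j|^p)$ are controlled by $\|x\|_\infty^{p-p_0}$ times $\|x-y\|_{p_1}$-type quantities. This is where $\gamma_{p,p_0}$ with $p_1=pp_0/(p-p_0)$ enters. Together with the normalization $m^{1/p-1/p_0}$, assumption \eqref{1-11a} makes this contribution of order $(Hm_0/m)^{1/p}$ times a power of $\sup_F\|f\|_{L_p(\pmb\xi)}^{p}$.
\item The \emph{tail part} (levels $n>n_0$) is bounded crudely by $\sup\|x\|_\infty^{p-p_0}\sup\|x\|_{p_1}^{p_0}$, and assumption \eqref{1-11a-0} controls it.
\end{itemize}
The $\log\log(m/m_0)$ and $\log(m/(Hm_0))$ factors, raised to the power $1-1/p_0$, come from summing $n_0$ levels with $\ell_{p_0}$-to-$\ell_1$ Hölder loss. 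This is exactly what Theorem~\ref{thmGammaBound} provides.

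\textbf{Step 3: self-bounding.} The conditional bound involves the empirical quantity
\[
\sup_F\frac1m\sum|f(\xi^j)|^p\le \sup_F\|f\|_{L_p(\mu)}^p+\Er_p .
\]
To make this usable, split the conditional bound using $ab\le\varepsilon a^2+b^2/\varepsilon$, or use a bound of the form $\Theta_0(1+\sup\|f\|_p^p+\Er_p)^{1/2}$. Taking $L_q$ norms then yields an inequality
\[
E\le C\,\kappa_q\,\Theta'\,(1+A+E)^{\alpha},\qquad \alpha<1,
\]
where $A:=\sup_F\|f\|_{L_p(\mu)}^p$. Condition \eqref{1-11b} guarantees $\Theta'\le$ a small constant. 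Solving the inequality gives $E\lesssim \kappa_q\Theta$. The factor $\kappa_q=q/(1+\log q)$ arises from the moment growth of the Rademacher sums with a bounded-difference (Bernstein/Poisson-type) tail, since the summands are bounded by $\sup\|f\|_\infty^p$ rather than being sub-Gaussian.

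\textbf{Step 4: tail bound.} The estimate \eqref{eq-probab-est-main} follows from Markov's inequality with $q=t$. Then $(Cq/\log q\,\Theta/(C'\Theta t))^q$ with $q\approx t$ gives the bound $\exp(-t\log t/e)$.

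\textbf{Main obstacle.} We expect Step 2, namely Theorem~\ref{thmGammaBound} together with the precise matching of the truncation level to \eqref{1-11b}, to be the hard part. The remaining steps are routine once the conditional chaining bound with the explicit $(\log\log)^{1-1/p_0}$ dependence is available.
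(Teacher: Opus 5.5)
There is a genuine gap in your Step~3, which is the passage to $L_q$ norms, and your explanation of the factor $q/(1+\log q)$ is incorrect.

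\textbf{The $L_q$ step.} You take $L_q(\pmb\xi)$ norms of the conditional chaining bound. That bound contains the random factor $\sup_{f\in F}\|f\|_{L_\infty(\pmb\xi)}^{p-p_0}\,\gamma_{p,p_0}(F,\|\cdot\|_{L_{p_1}(\pmb\xi)})^{p_0}$, and hypothesis \eqref{1-11a} controls only its \emph{expectation}, not its $q$-th moment. Moreover, absorbing $\sup_{f\in F}\|f\|_{L_p(\pmb\xi)}^p\le \Er_p(F,\pmb\xi)+\sup_{f\in F}\|f\|_{L_p(\mu)}^p$ at level $q$ puts a $q$-dependent factor into the absorption coefficient. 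The $q$-independent condition \eqref{1-11b} cannot make that coefficient small.

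\textbf{Where $q/(1+\log q)$ comes from.} Conditionally on $\pmb\xi$, the Rademacher process has $\psi_p$-increments in $\|\cdot\|_{p'}$ (Lemma~\ref{TailsEst}), so its conditional moments grow at most like $q^{1/p}$. The Poisson-type growth comes from the randomness of the sample points, not from the Rademacher signs. The missing tool is Talagrand's moment inequality for sums of independent mean-zero Banach-valued vectors (Lemma~\ref{thm-9-3}). Prove the case $q=1$ first, then write $\Er_p(F,\pmb\xi)=\sup_{f\in F}\bigl|\sum_j\varphi_j(f)\bigr|$ with $\varphi_j(f)=\frac1m\bigl(|f(\xi^j)|^p-\mathbb{E}|f(\xi^j)|^p\bigr)$. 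This gives $\bigl(\mathbb{E}|\Er_p|^q\bigr)^{1/q}\le\frac{Cq}{1+\log q}\bigl(\mathbb{E}\Er_p+\frac2m\bigl\|\max_j\sup_f|f(\xi^j)|^p\bigr\|_{L_q}\bigr)$. This max term, not a chaining tail, is where \eqref{1-11a-0} enters at level $q$. Indeed, $\max_j\sup_f|f(\xi^j)|^p\le m^{1-p_0/p}\sup_{f\in F}\|f\|^{p-p_0}_{L_\infty(\pmb\xi)}\sup_{g\in F}\|g\|^{p_0}_{L_{p_1}(\pmb\xi)}$, so it contributes $(H/m)^{p_0/p}\le (Hm_0/m)^{1/p}$.

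\textbf{Step~2 does not yet produce $\Theta$.} Theorem~\ref{thmGammaBound} is additive. It contains a term $\bigl(bn_0+b^2\alpha_0^{1/p}+(2^{n_0}\alpha_0)^{-1/p}\log\frac m{m_0}\bigr)\sup_{x\in G}\|x\|_p^p$. At $q=1$ you take $G=F(\pmb\xi)$ and absorb $\mathbb{E}\Er_p$ \emph{linearly}, which requires this coefficient to be at most $1/(2C)$. At $q=1$, \eqref{1-11a-0} enters only through the term $m_0^{p_0/p}\diam(G,\|\cdot\|_{p_1})^{p_0}$, using $0\in F$.

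The real content of the step is the parameter choice. Set $\delta=\frac{Hm_0}{m\log(m/m_0)}$ and take $\alpha_0=1/\log\frac m{m_0}$, $b=c\,(\delta\log\frac m{m_0})^{1/p}n_0^{-1/p_0}$, and $2^{n_0}\approx c^{-p}\delta^{-1}(\log\frac m{m_0})^p$. Then $n_0\approx|\log\delta|\approx\log\log\frac m{m_0}+\log\frac m{Hm_0}$, and the $\ell_{p_0}$-to-$\ell_1$ loss $n_0^{1/p_0'}$ is exactly the factor in $\Theta$. Condition \eqref{1-11b} is used precisely to get $\delta|\log\delta|^{p/p_0'}\log\frac m{m_0}\lesssim 1$, which makes the absorption coefficient small once $c=c(p)$ is small.

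Paying the H\"older loss over $n_0\approx\log_2(m/m_0)$ levels, as you describe, would instead give $(\log\frac m{m_0})^{1-1/p_0}$. In the setting of Theorem~\ref{Th-Nik} this loses a factor $(\log H/\log\log H)^{1-1/p}$.

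\textbf{Step~4.} The choice $q\approx t$ only yields $(1+\log t)^{-t}$. You need $q$ defined by $t=eq/(1+\log q)$, so that $q\ge t\log t/e$ and Markov's inequality gives $e^{-q}\le\exp(-t\log t/e)$.
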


The results announced in Section~\ref{sec1} will be derived from this theorem.

\smallskip

Our approach relies on Talagrand's generic chaining~\cite{Tal}, combining ideas from~\cite{Tal12} with van Handel's more recent  approach ~\cites{VH, VH1}, which utilizes interpolation functionals and the contraction principle.
This framework  is particularly convenient in our setting, since it allows one to compare a given chaining functional with more tractable auxiliary ones.

Let $F \subset B(\Omega)$ be a bounded set. A standard symmetrization argument, combined with Talagrand's majorizing measure theorem, implies (see~\eqref{5-8b}) that
\[
\mathbb{E}\Er_p(F,\pmb{\xi})
\le \frac{C(p)}{m}\,
\mathbb{E}\,\Bl[
\gamma_{p,1}\bigl(T_p(F(\pmb\xi)),\|\cdot\|_{p'}\bigr)\Br],
\]
thereby reducing the problem to the estimation of
$
\gamma_{p,1}\bigl(T_p(G),\|\cdot\|_{p'}\bigr)
$
for the vector set
\[
G:=F(\pmb\xi)\subset \CC^m.
\]
The main difficulty is that the geometry of $T_p(G)$ is usually much more complicated than that of the original set $G$, making it hard to obtain   direct,  sharp  estimates of
$
\gamma_{p,1}\bigl(T_p(G),\|\cdot\|_{p'}\bigr)
$
in concrete situations. The key step in our argument is therefore to bound this functional in terms of more tractable auxiliary chaining quantities on $G$. This is formalized in the following abstract comparison theorem.

\begin{theorem}\label{thmGammaBound}
Let $2 \le p < \infty$, $p_0\in(1, p]$, and $p_1:=\f {pp_0}{p-p_0}$.
Then there exists a constant $C = C(p)>0$
such that for every nonempty set $G \subset \mathbb{C}^m$
and  any choice of parameters  $b > 0$, $\alpha_0 \in (0,1)$, $n_0 \in \NN$ and $m_0\in \NN \cap [1, m)$, one has
\begin{align}\label{5-7-0}
\ga_{p,1}(T_p(G),\|\cdot\|_{p'})
&\le C \Biggl[
\Bigl( b n_0 + b^2\cdot (\alpha_0)^{1/p}
+\frac{\log \frac{m}{m_0}}{(2^{n_0}\alpha_0)^{1/p}} \Bigr) \sup_{x \in G} \|x\|_{p}^p
\\
&+
\sup_{x\in G}\|x\|_\infty^{p-p_0}\cdot
\frac{m_0^{p_0/p}\bigl[\diam(G, \|\cdot\|_{p_1}) \bigr]^{p_0}+ \bigl[\ga_{p,p_0}(G, \|\cdot\|_{p_1})\bigr]^{p_0}}{b^{p_0-1}}
\Biggr].\nonumber
\end{align}
\end{theorem}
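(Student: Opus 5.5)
We plan to prove Theorem~\ref{thmGammaBound} with van Handel's interpolation/contraction approach to chaining. We will use the functional $\gamma_{p,1}(\cdot,\|\cdot\|_{p'})$ in the form of Definition~\ref{def-Tal}, namely as an infimum over admissible sequences of $\sup_t\sum_n 2^{n/p}d(t,A_n)$. The aim is to build an admissible sequence for $T_p(G)$ out of an almost optimal admissible sequence $(\mathcal A_n)$ for $\gamma_{p,p_0}(G,\|\cdot\|_{p_1})$.

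The starting point is the pointwise estimate
\[
\bigl||a|^p-|c|^p\bigr|\le p\,|a-c|\,(|a|+|c|)^{p-1}.
\]
We split $(|a|+|c|)^{p-1}$ as $(|a|+|c|)^{p-p_0}(|a|+|c|)^{p_0-1}$ and apply H\"older with exponents matched to $p'$. This gives
\[
\bigl\||x|^p-|y|^p\bigr\|_{p'}\lesssim \|x-y\|_{p_1}^{?}\cdots,
\]
and the factor $\sup_{x\in G}\|x\|_\infty^{p-p_0}$ appears. The parameter $b$ enters through a Young-type splitting
\[
uv^{p_0-1}\le b\,u^{p_0'}+b^{-(p_0-1)}v^{p_0}.
\]
The first term produces $bn_0\sup\|x\|_p^p$, and the second produces $\gamma_{p,p_0}^{p_0}/b^{p_0-1}$ together with a $\sup_x\|x\|_\infty^{p-p_0}$ factor.

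The chain is then split into three regimes.

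\textbf{Early levels.} For $n\le n_0$ we use the trivial bound $\||x|^p\|_{p'}$ on each link. A telescoping/Young argument controls these links by $b\,n_0\sup_x\|x\|_p^p$ plus a share of the $\gamma$ term.

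\textbf{Intermediate levels.} For $n_0<n\le\log(m/m_0)$-type levels we use the contraction principle of Section~\ref{sec6}. It lets us compare the functional on $T_p(G)$ with the one on $G$ weighted by $\sup\|x\|_\infty^{p-p_0}$. Here the mismatch between $\ell_{p'}$ and the $\ell_{p_1}$, $\ell_p$ structure is absorbed by restricting the support: we keep only the largest $\asymp m_0$ coordinates. On those coordinates Hölder costs $m_0^{p_0/p}\,\mathrm{diam}(G,\|\cdot\|_{p_1})^{p_0}$. The remaining small coordinates are handled through the $\alpha_0$ threshold, meaning entries with $|x(j)|^p\le\alpha_0\|x\|_p^p$-type mass. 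This gives the terms $b^2\alpha_0^{1/p}$ and $\log(m/m_0)/(2^{n_0}\alpha_0)^{1/p}$ multiplying $\sup\|x\|_p^p$.

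\textbf{Late levels.} For $n\ge\log_2\log(m/m_0)$ the sets can be taken to be singletons, in dimension $m$. The factor $2^{n/p}$ is summed only up to $\log(m/m_0)$ because vectors supported on $m_0$ coordinates have cardinality-controlled nets; this yields the $\log(m/m_0)$ factor.

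The main obstacle is the intermediate regime. There we must decompose each $|x|^p$ into a ``peaky'' part on few coordinates and a ``flat'' part, and chain these two parts with different metrics. The contraction principle applies to coordinatewise Lipschitz maps, but $t\mapsto|t|^p$ is only locally Lipschitz. We will therefore truncate at level $\|x\|_\infty$ and interpolate between $\ell_{p_1}$ and $\ell_p$. We will first try a level-dependent truncation $|x(j)|\wedge(2^{-n}\alpha_0^{-1})^{1/p}\|x\|_p$ and check that the truncation errors sum to the $\alpha_0$ terms.
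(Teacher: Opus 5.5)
There is a genuine gap. Your plan has no mechanism that keeps the total cost of the level-wise Young splitting at $b\,n_0\,u^\ast$, where $u^\ast:=\sup_{x\in G}\|x\|_p^p$.

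If at level $n$ you bound a link by H\"older and then by Young with $\beta=b2^{-n/p}$, you pay $b$ times the $\ell_1$-mass of the coordinates that are not small. For vectors concentrated on few coordinates this mass is $\asymp u^\ast$ at each of the $\asymp\log m$ levels, so you get $b\log m\,u^\ast$. That is essentially no better than one global Lipschitz-plus-Young comparison, which gives $bu^\ast+b^{-(p_0-1)}\sup\|x\|_\infty^{p-p_0}[\gamma_{p,1}(G,\|\cdot\|_{p_1})]^{p_0}$. It therefore loses exactly the gain of $\gamma_{p,p_0}$ over $\gamma_{p,1}$ that the theorem is about.

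The paper's way out has two parts, and nothing in your plan replaces either.
\begin{itemize}
\item Lemma~\ref{lem-3-3}: at level $n$ one pays only for large coordinates outside a \emph{common} index set $I$ with $|I|\le 2^n$. On $\mathbb R^I$, Lemma~\ref{lem-1-2a} gives $e_{n+\ell}(R_IA,\|\cdot\|_{p'})\le 4\cdot2^{-2^\ell}\diam(A,\|\cdot\|_{p'})$, which van Handel's principle absorbs. Coordinates below $\tau_n=u^\ast/(\alpha_02^{n+n_0-1})$ cost only $\tau_n^{1/p}(u^\ast)^{1/p'}$. So the Young term involves only $\min_{|I|\le2^n}S_{I^c}(A;\tau_n)$.
\item Lemma~\ref{lem-3-4}: each $x$ has at most $\alpha_02^{n+n_0-1}$ coordinates above $\tau_n$. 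Hence that minimum is bounded, up to a small term, by the increment $u_{n+n_0}(x,r^a_{n+n_0}(x))-u_n(x,r_A)$ of the monotone functional $u_n(x,r)=\max_{|I|\le\alpha_02^n}\inf_{y\in B_{\mathbf T}(x,r)}\|R_Iy\|_1$. The $K$-functional bookkeeping of Lemma~\ref{lem-1-7-1} and Theorem~\ref{thm-2-4} telescopes these increments into $(n_0+1)u^\ast$. This is where $bn_0$ and $b^2\alpha_0^{1/p}$ come from.
\end{itemize}

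Relatedly, the ``contraction principle'' of Section~\ref{sec6} is not a comparison for coordinatewise Lipschitz maps. Truncating $t\mapsto|t|^p$ to make it Lipschitz would only reproduce the naive $\gamma_{p,1}$ bound. The principle converts local entropy estimates of the form \eqref{1-2-6} into chaining bounds, and all the work lies in producing those estimates.

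Your accounting of the remaining terms is also off.
\begin{itemize}
\item Early levels: the trivial bound for $n\le n_0$ gives $\sim2^{n_0/p}u^\ast$, not $bn_0u^\ast$.
\item The $m_0$ term: $m_0$ enters only as the starting level $n_1=\lceil\log_2m_0\rceil$. Levels below $n_1$ contribute $2^{n_1/p}\diam(T_p(G),\|\cdot\|_{p'})$. The global H\"older--Young estimate turns this into $bu^\ast+b^{-(p_0-1)}m_0^{p_0/p}\sup\|x\|_\infty^{p-p_0}\diam(G,\|\cdot\|_{p_1})^{p_0}$. There is no restriction to the largest $m_0$ coordinates.
\item The $\log$ factor: $\log(m/m_0)/(2^{n_0}\alpha_0)^{1/p}$ arises because $2^{n/p}\tau_n^{1/p}(u^\ast)^{1/p'}=u^\ast/(2^{n_0-1}\alpha_0)^{1/p}$ does not depend on $n$. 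It is summed over the $\log_2(m/m_0)$ levels $n_1\le n\le\lceil\log_2m\rceil$. Levels beyond $\log_2m$ are handled by the finite-dimensional decay in Lemma~\ref{cor-1-7}, not by singletons or nets of $m_0$-sparse vectors.
\item The threshold: your level-dependent truncation is the right ingredient, but it needs the extra factor $2^{n_0}$. Then the at most $\alpha_02^{n+n_0-1}$ large coordinates and a set $I$ of size $\alpha_02^n$ fit into a set of size $\alpha_02^{n+n_0}$, which is what makes the telescoping work.
\end{itemize}
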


\begin{remark} In the case of  $p_0:=p$, Theorem~\ref{thmGammaBound}   improves upon the naive estimate
\[
\ga_{p,1}(T_p(G),\|\cdot\|_{p'})
\le 2 p \Bl(\sup_{x \in G} \|x\|_{p}^{p/p'}\Br)\ga_{p,1}(G,\|\cdot\|_{\infty}),
\]
since   $\ga_{p,p}(G,\|\cdot\|_{\infty})$ typically grows at least logarithmically more slowly in $m$ than $\ga_{p,1}(G,\|\cdot\|_{\infty})$ (see, for example, Lemma~\ref{lem-2-5} when $G\subset \RR^m$ is $p$-convex). By utilizing these superior estimates  for  $\ga_{p,p}(G,\|\cdot\|_{\infty})$, and optimizing the  parameters $b, \al_0$ and $n_0$ in   Theorem \ref{thmGammaBound}, one typically obtains notably   sharper  bounds for  $\ga_{p,1}(T_p(G),\|\cdot\|_{p'})$.
\end{remark}

\begin{remark}\label{rm-complex}
It suffices to prove Theorem~\ref{thmGammaBound} in the real case, that is, for sets $G \subset \mathbb{R}^m$. To see this,  let $G \subset \mathbb{C}^m$ and consider the set
\[
T_1(G):=\{(|z_1|,\ldots,|z_m|)\colon (z_1,\ldots,z_m)\in G\}\subset\mathbb{R}^m.
\]
Then, for every $q\ge 1$,
\[
\diam(T_1(G), \|\cdot\|_{q})
\le \diam(G, \|\cdot\|_{q}),
\]
and (see~\eqref{eq-gamma-equiv}), for any $\alpha>0$, $\beta\ge 1$,
\[
\gamma_{\alpha, \beta}(T_1(G),\|\cdot\|_{q})
\le C(\alpha)\,\gamma_{\alpha, \beta}(G,\|\cdot\|_{q}),
\]
while
\[
\ga_{p,1}(T_p(G),\|\cdot\|_{p'})
=
\ga_{p,1}\bigl(T_p\bigl(T_1(G)\bigr),\|\cdot\|_{p'}\bigr).
\]
Applying Theorem~\ref{thmGammaBound} to the real set $T_1(G)$ yields~\eqref{5-7-0} for $G\subset \mathbb{C}^m$.
\end{remark}

\subsection{Uniformly convex classes}
Theorem~\ref{Th-main} is  especially useful when the indexing sets $F$ are  $\theta$-convex for some $\ta\ge 2$ (see Definition \ref{def-4-8}).
It is well-known that Euclidean balls are $2$-convex. More generally, for any $1<p<\infty$,  and any finite-dimensional subspace $X_N$ of $L_p(\mu)$,
the unit ball
\[
X_N^p:=\{f\in X_N\colon \|f\|_{L_p(\mu)}\le1\}
\]
is $\max\{p, 2\}$-convex with a constant $\eta=\eta(p)$ depending only on $p$.

Gu\'edon and Rudelson~\cite{GR} established the following estimate for every
$p\ge \theta$ and every $\theta$-convex subset $F$ of a Euclidean ball
$D$, with $\theta$-convexity constant $\eta$:
\begin{equation}\label{GR-est}
\mathbb{E}\Er_p(F,\pmb{\xi})
\le C(p,\eta)\Bigl(A + A^{1/2} \sup_{f \in F} \|f\|_{L_p(\mu)}^{p/2}\Bigr),
\end{equation}
where
\[
A = \frac{(\log m)^{2(1-\frac{1}{\theta})}}{m}
\mathbb{E}\Bigl(
\sup_{f \in D}\|f\|_{L_\infty(\pmb{\xi})}^2
\sup_{g \in F}\|g\|_{L_\infty(\pmb{\xi})}^{p-2}
\Bigr).
\]
The specific case  where $F=D$ is  the Euclidean ball and $p=2$  was treated earlier by Rudelson~\cite{Rud99}. The estimate~\eqref{GR-est} was further improved in~\cite{Kos}*{Corollary~4.4}, where it was shown to hold with
\[
A =
\frac{1}{m}
\mathbb{E}\Bigl(
\sup_{f \in D}\|f\|_{L_\infty(\pmb{\xi})}^2
\sup_{g \in F}\|g\|_{L_\infty(\pmb{\xi})}^{p-2}
\Bigr)
+ \frac{\log m}{m}
\mathbb{E}\Bl(
\sup_{g \in F}\|g\|_{L_\infty(\pmb{\xi})}^{p}\Br).
\]
Taking $\theta=p\ge 2$,  $F = X_N^p$, and  $D = X_N^2$,  one obtains the Marcinkiewicz discretization
inequality~\eqref{mz} with
$m = C(p, \varepsilon) H^{p/2} \log H$ sampling points
for each subspace $X_N \in \textnormal{NI}_{2,\infty}(H)$.

To cover the case $X_N \in \textnormal{NI}_{p,\infty}(H)$ with $p>2$, the following counterpart of the estimate~\eqref{GR-est} was proved in~\cite{Kos}*{Corollary~4.7} for any $\ta$-convex subset $F$ of $B(\Og)$:
\[
\mathbb{E}\Er_p(F,\pmb{\xi})
\le C(p, \eta)\Bigl(A + A^{\frac{1}{\theta}} \sup_{f \in F} \|f\|_{L_p(\mu)}^{p(1-\frac{1}{\theta})}\Bigr),
\]
where
\[
A = \frac{(\log m)^\theta}{m}
\mathbb{E}
\sup_{g \in F}\|g\|_{L_\infty(\pmb{\xi})}^p.
\]
Taking here $F = X_N^p$ and $\theta=p$, one obtains the Marcinkiewicz discretization inequality~\eqref{mz} with
$
m = C(p, \varepsilon)\, H (\log H)^p
$
sampling points for every subspace $X_N \in \textnormal{NI}_{p,\infty}(H)$ (see~\cite{DT1}*{Theorem~2.2} for further refinements).

In this paper,  we deduce from Theorem~\ref{Th-main} the following general result on the discretization error over $\theta$-convex sets $F \subset B(\Omega;\mathbb{R})$, where $(\Omega,\mathcal{F}, \mu)$ denotes an arbitrary probability space, and
$B(\Omega;\mathbb{R})$ denotes the space of all bounded real-valued functions on $\Og$.

\begin{theorem}\label{Th-Conv}
Let $\pmb{\xi}:=(\xi^1,\ldots,\xi^m)$ be a collection of $m$ independent
random elements taking values in $\Omega$ and satisfying
condition~\eqref{probab-cond}.
Let $p  \ge 2$ and let  $F\subset B(\Omega;\mathbb{R})$ be a $p$-convex subset
with constant $\eta>0$ such that
\[
\sup_{f\in F}\|f\|_{L_p(\mu)}\le1.
\]
Assume that
for some constants $H\ge 16$ and $1\le q<\infty$,
\begin{equation}\label{3-7a}
\Bigl(\mathbb{E}\sup_{f \in F}\|f\|_{L_\infty(\pmb{\xi})}^{p q}\Bigr)^{1/q}
\le H.
\end{equation}
Then there exists a constant $C=C(p,\eta)>0$ such that, whenever
\[
m \ge (8 e p)^{2p}\, H\, (\log H)\, (\log\log H)^{p-1},
\]
one has
\[
 \Bigl(\mathbb{E}\,|\Er_p(F,\pmb{\xi})|^q\Bigr)^{1/q}
\le
\frac{C q}{1+\log q}
\Bigl(\frac{H \log m}{m}\Bigr)^{1/p}
\Bigl(\log \frac{m}{H}\Bigr)^{1-\frac{1}{p}}.
\]
Furthermore, if condition~\eqref{3-7a} is satisfied for every $q\ge 1$, and the sample size $m$ satisfies
\[
m \ge (\lambda\,\varepsilon^{-1}\log \varepsilon^{-1} )^p
H\, (\log H)\, (\log\log H)^{p-1}
\]
for some constants $\lambda\ge e$ and $\varepsilon \in (0,\tfrac12]$, then
\[
\mathbb{P}\bigl[\Er_p(F,\pmb{\xi})>\varepsilon\bigr]
\le 2  e^{-c\ld}
\]
for some constant $c=c(p,\eta)>0$.
\end{theorem}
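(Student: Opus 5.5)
The plan is to apply Theorem~\ref{Th-main} with $p_0:=p$, so that $p_1=\infty$ and $p/p_0'=p-1$, and with $F$ the given $p$-convex class. Two hypotheses have to be checked. Since $p_0=p$, the factor $\sup_f\|f\|_{L_\infty(\pmb\xi)}^{p-p_0}$ equals $1$. Condition~\eqref{1-11a-0} then reads
\[
\Bigl(\EE\sup_{g\in F}\|g\|_{L_\infty(\pmb\xi)}^{pq}\Bigr)^{1/q}\le H,
\]
which is exactly~\eqref{3-7a}. For condition~\eqref{1-11a} I need
\[
\EE\,\gamma_{p,p}\bigl(F,\|\cdot\|_{L_\infty(\pmb\xi)}\bigr)^p\le Hm_0,
\]
where $\gamma_{p,p}(F,\|\cdot\|_{L_\infty(\pmb\xi)})=m^{1/p-1/p}\gamma_{p,p}(F(\pmb\xi),\|\cdot\|_\infty)=\gamma_{p,p}(F(\pmb\xi),\|\cdot\|_\infty)$.

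The set $F(\pmb\xi)\subset\RR^m$ inherits $p$-convexity from $F$, measured with respect to the discrete gauge or a suitable norm. I will use the estimate announced in the remark after Theorem~\ref{thmGammaBound} (Lemma~\ref{lem-2-5}) for $\gamma_{p,p}$ of $p$-convex sets in $\ell_\infty^m$. I expect it to give, up to $C(p,\eta)$,
\[
\gamma_{p,p}(G,\|\cdot\|_\infty)^p\lesssim \log m\cdot\sup_{x\in G}\|x\|_\infty^p.
\]
This is the standard Talagrand bound: entropy numbers of a $p$-convex body in $\ell_\infty^m$ decay like $(\log m/2^n)^{1/p}$, and summing their $p$-th powers $2^n e_n^p$ over $n\le\log m$ gives $\log m$. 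Taking expectations and using~\eqref{3-7a} with $q=1$ yields $\EE\gamma_{p,p}^p\le C(p,\eta)\,H\log m$. I therefore choose $m_0:=\lceil C(p,\eta)\log m\rceil$, absorbing the constant into $H$ if necessary.

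It then remains to check~\eqref{1-11b} and to simplify $\Theta$. With $m_0\asymp\log m$, the requirement is
\[
m\ge e\bigl(e\log\log(m/\log m)\bigr)^{p-1}H\log m.
\]
Under $m\ge(8ep)^{2p}H\log H(\log\log H)^{p-1}$, a self-consistency argument handles this: write $m=H u$ with $u\ge (\log H)(\log\log H)^{p-1}$ large, and note that $\log m\le \log H+\log u$ while $\log\log m\lesssim\log\log H+\log\log u$; the case $u$ huge is monotone. Also $\sup_{f}\|f\|_{L_p(\mu)}^p\le1$. The result is
\[
\Theta\lesssim\Bigl(\tfrac{H\log m}{m}\Bigr)^{1/p}\Bigl(\log\log m+\log\tfrac{m}{H\log m}\Bigr)^{1-1/p},
\]
and $\log\log m+\log(m/(H\log m))\lesssim\log(m/H)$ in this range, since $m/H\ge\log H$. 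This gives the moment bound. For the probability bound, $\Theta\le\varepsilon/(C\lambda)$ is checked from the assumed size of $m$. The factor $\log\varepsilon^{-1}$ compensates for the logarithm $\log(m/H)\approx\log(\lambda^p\varepsilon^{-p}\cdots)$. Choosing $t=c\lambda$ in~\eqref{eq-probab-est-main} gives probability at most $e^{-c\lambda}$, and the factor $2$ absorbs small $t$.

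I expect the main obstacle to be establishing $\EE\gamma_{p,p}(F(\pmb\xi),\|\cdot\|_\infty)^p\lesssim H\log m$ with only the $p$-convexity of $F$. Convexity is defined relative to the $L_p(\mu)$ structure rather than the empirical one, so I would first transfer it: apply the entropy estimate to the convex hull in the norm of $X$ (or $L_p(\mu)$) and use Maurey/dual Sudakov-type covering in $\ell_\infty^m$, at a cost of $\log m$. If Lemma~\ref{lem-2-5} is stated directly for $p$-convex subsets of $\RR^m$, I need to check that $F(\pmb\xi)$ qualifies with a constant depending only on $\eta$. The remaining difficulty, verifying~\eqref{1-11b}, is mere bookkeeping with iterated logarithms.
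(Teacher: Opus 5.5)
Your overall route is the paper's. You apply Theorem~\ref{Th-main} with $p_0=p$ and $p_1=\infty$, so that \eqref{1-11a-0} is literally \eqref{3-7a}. Then \eqref{1-11a} follows from $\gamma_{p,p}(F,\|\cdot\|_{L_\infty(\pmb\xi)})^p\le C(p,\eta)\,\log m\,\sup_{f\in F}\|f\|_{L_\infty(\pmb\xi)}^p$. The paper routes this through Lemma~\ref{lem-log} with $\alpha=m_0=1$, putting $\log m$ into $\widetilde H=H\log m$ rather than into $m_0$; that difference is immaterial.

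What you call the main obstacle is not one. $p$-convexity refers to the gauge $\|\cdot\|_F$ of $F$ itself, while Lemmas~\ref{lem-2-5} and~\ref{cor-8-2} measure $\gamma$ and entropy in any other (semi)norm, here $\|\cdot\|_{L_\infty(\pmb\xi)}$. Lemma~\ref{cor-8-2} already is the Maurey-type $\ell_\infty^m$ entropy bound. So for each fixed $\pmb\xi$, $\gamma_{p,p}\le C(p,\eta)\sup_n 2^{n/p}e_n\le C(p,\eta)(\log m)^{1/p}\sup_{f\in F}\|f\|_{L_\infty(\pmb\xi)}$, and no transfer of convexity is needed. It is the \emph{supremum} in Lemma~\ref{lem-2-5} that yields $\log m$. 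Your heuristic of summing $2^ne_n^p$ would give $(\log m)^2$, since each of the roughly $\log m$ terms with $\log\log m\lesssim n\le\log_2 m$ is of size $\log m$.

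Two steps, however, fail as written.

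First, you place $C(p,\eta)$ into $m_0=\lceil C(p,\eta)\log m\rceil$ (or into $H$). Then \eqref{1-11b} demands $m\ge16m_0$ and $m\ge e\,(e\log\log\frac{m}{m_0})^{p-1}Hm_0\gtrsim C(p,\eta)\,H\log m\,(\log\log m)^{p-1}$. This does not follow from $m\ge(8ep)^{2p}H\log H(\log\log H)^{p-1}$, whose constant depends on $p$ only. Your displayed requirement silently drops $C(p,\eta)$, which blows up as $\eta\to0$. The paper avoids this by applying Theorem~\ref{Th-main} to the rescaled class $R^{-1/p}F$ with $R=C(p,\eta)$. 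Then \eqref{1-11a-0} and \eqref{1-11a} hold with $\eta$-free parameters, and still $\sup_{f\in F}\|R^{-1/p}f\|_{L_p(\mu)}^p\le1$. Since $\Er_p(F,\pmb\xi)=R\,\Er_p(R^{-1/p}F,\pmb\xi)$, the only cost is a factor $R$ in the conclusion.

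Second, in the probability part the claim $\Theta\le\varepsilon/(C\lambda)$ is false. Both $\log m$ and $\log\frac{m}{H}$ grow like $p\log\lambda$, and the factor $\log\varepsilon^{-1}$ in the sample size only absorbs their $\varepsilon$-dependent part. For instance, with $H=16$, $\varepsilon=\frac12$ and $m$ at the threshold, one has $m\asymp_p\lambda^p$ and $\Theta\asymp\lambda^{-1}\log\lambda$ for large $\lambda$. So with $t=c\lambda$ you cannot guarantee $C\Theta t\le\varepsilon$.

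The fix is to take $t_0=\varepsilon/(C\Theta)$, show $t_0\gtrsim\lambda/\log\lambda$, and use the super-exponential tail $\exp(-t\log t/e)$ in \eqref{eq-probab-est-main}. This gives $t_0\log t_0\gtrsim\lambda$, which is exactly claim~\eqref{6-8} (with $\beta=1$) in the proof of Lemma~\ref{lem-log}. The range of bounded $\lambda$, where $t_0<e$ and \eqref{eq-probab-est-main} does not apply, is then handled by choosing $c$ small enough that $2e^{-c\lambda}\ge1$ there.
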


Taking $F = X_N^p$, one deduces Theorem~\ref{Th-Nik} for a subspace
$X_N \in \textnormal{NI}_{p,\infty}(H)$ in the real case.

\begin{remark}
Using the equivalence between the two formulations described in
Subsection~\ref{subsect-moments}, the preceding theorem gives a counterpart
for uniform approximation of moments of a random vector $\pmb\varphi$ by
empirical moments based on its i.i.d.\!\! copies
$\pmb\varphi^1,\ldots,\pmb\varphi^m$, uniformly over $p$-convex sets
$B\subset\mathbb R^N$.
Under this equivalence, assumption~\eqref{3-7a} translates into the following
control of the dual norm:
\[
\bigl(\mathbb{E}\max_{1\le j\le m}
\|\pmb\varphi^j\|_{B^\circ}^{pq}\bigr)^{1/q}
\le H,
\]
where \[\|\mathbf{x}\|_{B^\circ}:=\sup_{\mathbf y \in B}|\la \mathbf x, \mathbf y\ra|,\  \ \  \mathbf x\in \RR^N.\]
A quantity of this type already appears in the result of
Gu\'edon and Rudelson~\cite{GR}.
\end{remark}

\section{Preliminaries: entropy, entropy numbers and chaining functionals}\label{sec-prelim}

In this section, we recall several basic facts concerning entropy numbers and Talagrand's generic chaining theory~\cite{Tal}. We begin by fixing some notation.

Let $(\mathbf{T},\varrho)$ be a metric space. For $x\in \mathbf{T}$ and $r>0$, we write
\[
B_\varrho(x,r):=\{y\in \mathbf{T}\colon \varrho(x,y)\le r\}
\]
for the closed ball of radius $r$ centered at $x$. For a set $A\subset \mathbf{T}$, we define
\[
\diam(A,\varrho):=\sup_{s,t\in A}\varrho(s,t)\  \ \text{and}\  \ \varrho(s, A):=\inf_{t\in A}\varrho(s,t)\  \ \text{for $s\in \mathbf{T}$}.
\]
If the metric $\varrho$ is induced by a norm $\|\cdot\|$, we write $\diam(A,\|\cdot\|)$ instead of $\diam(A,\varrho)$.

For a finite set $\Lambda$, we denote its cardinality by $|\Lambda|$. For $x\ge0$, we write $\lceil x\rceil$ for the smallest integer greater than or equal to $x$. Finally, we set
\[
N_0:=1,
\quad
N_n:=2^{2^n}, \quad n\in\NN.
\]

\subsection{Entropy and entropy numbers}

\begin{definition}
For $\varepsilon>0$, the covering number $\mathcal N_\varepsilon(A,\varrho)$ of a bounded set $A\subset \mathbf T$ is defined as the least positive integer $n$ for which there exist points $x_1,\dots,x_n\in A$ such that
\[
A\subset \bigcup_{j=1}^n B_\varrho(x_j,\varepsilon).
\]
The $\varepsilon$-entropy of $A$ with respect to $\varrho$ is defined by
\[
\mathcal H_\varepsilon(A,\varrho):=\log_2 \mathcal N_\varepsilon(A,\varrho).
\]
\end{definition}

\begin{definition}
Let $A\subset \mathbf T$ be bounded. For $n\in \mathbb N_0$, the $n$-th entropy number of $A$ with respect to $\varrho$ is defined by
\[
e_n(A,\varrho)
:=
\inf_{x_1,\dots,x_{N_n}\in A}
\sup_{x\in A}\min_{1\le j\le N_n}\varrho(x,x_j),
\]
or equivalently,
\[
e_n(A,\varrho)
=
\inf\Bigl\{\varepsilon>0\colon
A\subset \bigcup_{j=1}^{N_n} B_\varrho(x_j,\varepsilon)
\text{ for some } x_1,\dots,x_{N_n}\in A
\Bigr\}.
\]
\end{definition}
Clearly,
\[
e_n(A,\varrho)
=
\inf\bigl\{\varepsilon>0\colon\mathcal H_\varepsilon(A,\varrho)\le 2^n\bigr\}.
\]
One may also define entropy numbers without requiring the centers of the covering balls to belong to $A$, namely
\[
\tilde e_n(A,\varrho):=
\inf\Bigl\{\varepsilon>0\colon
A\subset \bigcup_{j=1}^{N_n} B_\varrho(x_j,\varepsilon)
\text{ for some } x_1,\dots,x_{N_n}\in \mathbf T
\Bigr\}.
\]
Then
\begin{equation}\label{3-1-0}
  \tilde e_n(A,\varrho)\le e_n(A,\varrho)\le 2\,\tilde e_n(A,\varrho).
\end{equation}
If the metric $\varrho$ is induced by a norm $\|\cdot\|$, we also write $e_n(A,\|\cdot\|)$ instead of $e_n(A,\varrho)$.

We shall use the following standard properties of entropy numbers in finite-dimensional spaces, which follow from \cite{TemBook}*{Corollary~7.2.2} and \cite{TemBook}*{Estimate~(7.1.6)}.

\begin{lemma}\label{lem-1-2a}
Let $(X_N,\|\cdot\|)$ be an $N$-dimensional real normed space, and let
\[
B:=\{x\in X_N\colon\|x\|\le 1\}
\]
be its unit ball. Then
\[
e_n(B,\|\cdot\|)\le 4N_n^{-1/N}
=4\cdot 2^{-2^n/N},
\quad n\in\mathbb N_0.
\]
Moreover, for every bounded set $A\subset X_N$, the sequence
\[
\{N_n^{1/N}e_n(A,\|\cdot\|)\}_{n=0}^\infty
\]
is almost decreasing, in the sense that for all $k,n\in\mathbb N_0$ with $k\le n$,
\begin{equation}\label{1-1-0}
N_n^{1/N}e_n(A,\|\cdot\|)
\le 3\,N_k^{1/N}e_k(A,\|\cdot\|).
\end{equation}
\end{lemma}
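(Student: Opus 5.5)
The statement to be proved is Lemma~\ref{lem-1-2a}. Its two parts follow from volumetric comparison in $X_N \cong \RR^N$. I would fix a Lebesgue measure on $X_N$ through some linear isomorphism with $\RR^N$. All volume ratios below are independent of this choice.

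For the first part, take a maximal $\varepsilon$-separated subset $\{x_1,\dots,x_M\}$ of $B$. By maximality it is an $\varepsilon$-net of $B$, with centers in $B$. The balls $x_j+\tfrac{\varepsilon}{2}B$ are disjoint and lie in $(1+\tfrac{\varepsilon}{2})B$. Comparing volumes gives
\[
M\le \Bigl(1+\frac{2}{\varepsilon}\Bigr)^N\le\Bigl(\frac{3}{\varepsilon}\Bigr)^N
\qquad\text{for }\varepsilon\le 1 .
\]
Choose $\varepsilon=3N_n^{-1/N}$. If $\varepsilon\le1$, then $M\le N_n$, so $e_n(B)\le 3N_n^{-1/N}\le 4N_n^{-1/N}$. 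If $\varepsilon>1$, then $4N_n^{-1/N}>4/3$, and a single center $x_1=0$ already gives $e_n(B)\le 1$. This proves the first claim.

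For the almost-monotonicity \eqref{1-1-0}, let $k\le n$ and $\delta>e_k(A)$. Cover $A$ by $N_k$ balls $B(y_i,\delta)$ with $y_i\in A$. Each such ball is a translate of $\delta B$, which can be covered by
\[
\Bigl(1+\frac{2}{t}\Bigr)^N
\]
translates of $t\delta B$, again by the volumetric argument. I would choose $t$ so that
\[
N_k\Bigl(1+\frac{2}{t}\Bigr)^N\le N_n,
\qquad\text{for instance}\qquad
t=\frac{3\,(N_k/N_n)^{1/N}}{1}\ \text{ when this is }\le 1 .
\]
Because $(1+2/t)\le 3/t$ for $t\le1$, this choice makes the count $N_k(3/t)^N=N_n$. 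The union of these smaller balls covers $A$ with $N_n$ balls of radius $t\delta$, but their centers need not lie in $A$. This yields
\[
\tilde e_n(A)\le 3(N_k/N_n)^{1/N}\,\delta .
\]
\emph{Main obstacle: the constant.} Converting $\tilde e_n$ to $e_n$ through \eqref{3-1-0} costs a factor $2$, which would give $6$ instead of $3$. To avoid this, I would cover $B(y_i,\delta)\cap A$ directly by a maximal $t\delta$-separated subset of $B(y_i,\delta)\cap A$. These points lie in $A$, and their half-balls are disjoint and contained in $y_i+(1+\tfrac t2)\delta B$. The same volume count then gives $N_n$ centers in $A$ at radius $t\delta$, hence $e_n(A)\le 3(N_k/N_n)^{1/N}\delta$.

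The remaining case is $t>1$. Then $e_n(A)\le e_k(A)\le \delta< t\delta$, so the inequality holds trivially. Letting $\delta\downarrow e_k(A)$ gives
\[
N_n^{1/N}e_n(A)\le 3N_k^{1/N}e_k(A),
\]
which is \eqref{1-1-0}.
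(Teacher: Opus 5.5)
Your proof is correct. The paper gives no argument of its own and simply cites Temlyakov's book (Corollary~7.2.2 and estimate~(7.1.6)), which rests on the same volumetric comparison, so your route is essentially the standard one; it even yields the constant $3$ in the first bound. The only delicate point is keeping the centers inside $A$, so as to get the constant $3$ rather than the $6$ that passing through $\tilde e_n$ and \eqref{3-1-0} would cost. You handle this correctly by taking maximal $t\delta$-separated subsets of $B(y_i,\delta)\cap A$, which exist because the volume bound caps the cardinality of every separated subset even though $A$ need not be closed.
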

For further properties of entropy numbers, we refer to~\cite{TemBook}*{Chapter~7}.

\subsection{The Talagrand chaining functional}

\begin{definition}\label{def-Tal}
Let $(\mathbf T,\varrho)$ be a metric space. A sequence of partitions
$\{\mathcal A_n\}_{n=0}^\infty$ of $\mathbf T$ is called \emph{increasing} if, for every
$n\ge0$ and every $I\in\mathcal A_n$, $J\in\mathcal A_{n+1}$, one has either
$J\subset I$ or $J\cap I=\emptyset$.
It is called an \emph{admissible sequence of partitions} if it is increasing and
satisfies $|\mathcal A_n|\le N_n$ for all $n\ge0$.

For $x\in\mathbf T$, we denote by $\mathcal A_n(x)$ the unique element of
$\mathcal A_n$ containing $x$.
\end{definition}

\begin{definition}
Let $\alpha>0$ and $1\le \beta<\infty$. The \emph{chaining functional} of
$(\mathbf T,\varrho)$ is defined by
\[
\gamma_{\alpha,\beta}(\mathbf T,\varrho)
:=
\biggl(
\inf_{\{\mathcal A_n\}}
\sup_{x\in\mathbf T}
\sum_{n=0}^\infty
\bigl[2^{n/\alpha}\diam(\mathcal A_n(x),\varrho)\bigr]^\beta
\biggr)^{1/\beta},
\]
where the infimum is taken over all admissible sequences of partitions
$\{\mathcal A_n\}_{n=0}^\infty$ of $\mathbf T$. If the metric $\varrho$ is induced by a norm $\|\cdot\|$, we write
$\gamma_{\alpha,\beta}(\mathbf T,\|\cdot\|)$ instead of
$\gamma_{\alpha,\beta}(\mathbf T,\varrho)$.
\end{definition}

By definition,
\begin{equation*}
  \diam(\mathbf T,\varrho)
  =
  \sup_{x\in\mathbf T}\diam(\mathcal A_0(x),\varrho)
  \le
  \gamma_{\alpha,\beta}(\mathbf T,\varrho).
\end{equation*}
Thus, without loss of generality, we may always assume that
$\diam(\mathbf T,\varrho)<\infty$.

There is an equivalent description of the chaining functional in terms of approximating sets.

\begin{definition}
  Let $\alpha>0$, $1\le \beta<\infty$, and let $(\mathbf T,\varrho)$ be a metric space. Define
  \[
  \gamma_{\alpha,\beta}^*(\mathbf T,\varrho)
  :=
  \biggl(
  \inf_{\{T_n\}}
  \sup_{x\in\mathbf T}
  \sum_{n=0}^\infty
  \bigl[2^{n/\alpha}\varrho(x,T_n)\bigr]^\beta
  \biggr)^{1/\beta},
  \]
  where the infimum is taken over all sequences of subsets $T_n\subset\mathbf T$
  satisfying $|T_n|\le N_n$.
\end{definition}

It is known (see \cite{Tal}*{Theorem~2.3.1} and \cite{VH}*{Lemma~4.2}) that:
\begin{equation}\label{eq-gamma-equiv}
  \gamma_{\alpha,\beta}^*(\mathbf T,\varrho)
  \le
  \gamma_{\alpha,\beta}(\mathbf T,\varrho)
  \le
  C(\alpha)\,\gamma_{\alpha,\beta}^*(\mathbf T,\varrho).
\end{equation}

The chaining functional is also linked to entropy numbers through the estimate
\begin{equation}\label{1-2-2025}
  \gamma_{\alpha,\beta}(\mathbf T,\varrho)
  \le
  C(\alpha)
  \biggl(
  \sum_{n=0}^\infty
  \bigl[2^{n/\alpha}e_n(\mathbf T,\varrho)\bigr]^\beta
  \biggr)^{1/\beta}.
\end{equation}

The following theorem of Talagrand, which connects the supremum of a random process with the associated chaining functional, is a basic tool in what follows.

\begin{theorem}\label{thm-1-3}
Let $\{W_x\colon x\in\mathbf T\}$ be a random process indexed by a metric space
$(\mathbf T,\varrho)$. Assume that there exists $\alpha>0$ such that
\begin{equation}\label{exp}
\PP\bigl(|W_x-W_y|\ge u\,\varrho(x,y)\bigr)\le 2e^{-u^\alpha},
\quad \forall\, u>0,\ \forall\, x,y\in\mathbf T.
\end{equation}
Then there exists a constant $c=c(\alpha)>0$ such that for every $x_0\in\mathbf T$,
\[
\PP\bigl[\sup_{x\in\mathbf T}|W_x-W_{x_0}|
\ge \gamma_{\alpha,1}(\mathbf T,\varrho)\,u\bigr]
\le 2e^{-cu^\alpha},
\quad \forall\, u>0.
\]
In particular,
\[
\EE \sup_{x\in\mathbf T}|W_x-W_{x_0}|
\le C(\alpha)\,\gamma_{\alpha,1}(\mathbf T,\varrho).
\]
\end{theorem}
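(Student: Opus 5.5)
This is Talagrand's classical generic chaining bound. I would prove the tail estimate by constructing chains from an almost optimal admissible sequence; the expectation bound then follows by integrating the tail.

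\emph{Setup.} We may assume $\gamma:=\gamma_{\alpha,1}(\mathbf T,\varrho)<\infty$, and, by separability or a limiting argument, that $\mathbf T$ is finite. Fix an admissible sequence $\{\mathcal A_n\}$ with
\[
\sup_x\sum_n 2^{n/\alpha}\diam(\mathcal A_n(x))\le 2\gamma.
\]
Choose in each $A\in\mathcal A_n$ a point $\pi_n(A)$, with $\pi_0(\mathbf T)=x_0$; this uses that $\mathcal A_0=\{\mathbf T\}$, since $N_0=1$. Set $\pi_n(x):=\pi_n(\mathcal A_n(x))$. Because the partitions are increasing, $\pi_n(x)$ and $\pi_{n-1}(x)$ both lie in $\mathcal A_{n-1}(x)$, so
\[
\varrho(\pi_n(x),\pi_{n-1}(x))\le \diam(\mathcal A_{n-1}(x)).
\]
For finite $\mathbf T$ we have $\pi_n(x)=x$ for large $n$, and hence
\[
W_x-W_{x_0}=\sum_{n\ge1}\bigl(W_{\pi_n(x)}-W_{\pi_{n-1}(x)}\bigr).
\]

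\emph{Union bound.} At level $n$ the number of distinct pairs $(\pi_n(x),\pi_{n-1}(x))$ is at most $N_n N_{n-1}\le N_{n+1}=2^{2^{n+1}}$. Fix $u\ge u_0(\alpha)$ and let $\Omega_u$ be the event that for all $n\ge1$ and all such pairs,
\[
|W_{\pi_n}-W_{\pi_{n-1}}|\le (u+2^{(n+1)/\alpha})\,\varrho(\pi_n,\pi_{n-1}).
\]
By \eqref{exp} and $(a+b)^\alpha\ge c_\alpha(a^\alpha+b^\alpha)$,
\[
\PP(\Omega_u^c)\le\sum_n 2\cdot 2^{2^{n+1}}\exp\bigl(-c_\alpha(u^\alpha+2^{n+1})\bigr).
\]
Rescaling the level-$n$ threshold by a constant factor $K_\alpha$, so that the exponent becomes $K_\alpha^\alpha c_\alpha(u^\alpha+2^{n+1})$ and beats $2^{n+1}\log 2$, the sum is at most $C e^{-cu^\alpha}$.

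\emph{Bound on the event.} On $\Omega_u$ we get
\[
|W_x-W_{x_0}|\le K_\alpha\sum_{n\ge1}\bigl(u+2^{(n+1)/\alpha}\bigr)\diam(\mathcal A_{n-1}(x)).
\]
The term $\sum_n u\,\diam(\mathcal A_{n-1}(x))$ is at most $u\sum_n 2^{(n-1)/\alpha}\diam(\mathcal A_{n-1}(x))\le 2u\gamma$. The term $\sum_n 2^{(n+1)/\alpha}\diam(\mathcal A_{n-1}(x))$ is at most $2^{2/\alpha}\cdot2\gamma$. Hence, for $u\ge1$,
\[
\sup_x|W_x-W_{x_0}|\le C(\alpha)\,u\,\gamma .
\]
Replacing $u$ by $u/C(\alpha)$ gives the claimed tail bound $2e^{-cu^\alpha}$ for $u$ large. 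For small $u$ the bound is trivial after decreasing $c$, since $2e^{-cu^\alpha}\ge1$ once $cu^\alpha\le\log2$.

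\emph{Expectation.} Integrating the tail,
\[
\EE\sup_x|W_x-W_{x_0}|=\gamma\int_0^\infty\PP\bigl(\sup_x|W_x-W_{x_0}|\ge\gamma u\bigr)\,du\le C(\alpha)\gamma .
\]

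\emph{Main obstacle.} The delicate step is the bookkeeping in the union bound: the per-level thresholds must absorb the $N_{n+1}$ pairs while leaving a total of $2e^{-cu^\alpha}$ with the constant $2$, not some larger constant. I expect to handle this by decreasing $c$ and treating small $u$ trivially, as above.
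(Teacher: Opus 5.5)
Your proof is correct. It is the standard generic chaining argument: chaining along an almost optimal admissible sequence, a union bound with level-dependent thresholds $u+2^{(n+1)/\alpha}$, recovering the prefactor $2$ by shrinking $c$ and treating small $u$ trivially, and then integrating the tail. The paper does not prove this statement; it quotes it as Talagrand's theorem, and your argument is essentially Talagrand's own proof.
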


\subsection{Entropy and chaining functionals of \texorpdfstring{$\theta$}{theta}-convex sets}

Uniform convexity, and in particular $\theta$-convexity, plays an important role in the study of entropy numbers and Talagrand's chaining functionals in normed spaces. In this section, we collect several known estimates for $\theta$-convex sets that will be used later to derive a convenient upper bound for the chaining functional appearing in our main arguments.

We begin by recalling the definition of a $\theta$-convex set.

\begin{definition}\label{def-4-8}
Let $X$ be a real linear space and let $\theta \ge 2$.
A centrally symmetric convex set $F \subset X$ is called
\emph{$\theta$-convex with constant $\eta>0$}
if there exists a norm $\|\cdot\|_F$ on  a linear subspace $Y$ of $X$ such that $(Y,\|\cdot\|_F)$ is a Banach space,
$F=\{x\in Y\colon  \|x\|_F\le 1\}$   and
\[
\biggl\|\frac{f+g}{2}\biggr\|_F
\le
\max\{\|f\|_F,\|g\|_F\}
-\eta\,\|f-g\|_F^\theta,
\quad \forall\, f,g\in F.
\]
In this case, we also call $(Y, \|\cdot\|_F)$ a \emph{$\theta$-convex Banach space}
with constant $\eta>0$.
\end{definition}

Classical examples of $\theta$-convex Banach spaces are the spaces $L_p$, $1<p<\infty$, and their closed linear subspaces, with $\theta=\max\{p,2\}$.
\begin{remark}
It is known (see, for instance, \cite{Pi}*{Proposition~2.4}) that the above property is equivalent to the inequality
\[
\left\|\frac{f+g}{2}\right\|_F^\theta
+
\lambda
\left\|\frac{f-g}{2}\right\|_F^\theta
\le
\frac{1}{2}\bigl(\|f\|_F^\theta+\|g\|_F^\theta\bigr),
\quad \forall\, f,g \in Y,
\]
where $\lambda>0$ depends only on $\eta$ and $\theta$.
\end{remark}

\begin{remark}
The property of $\theta$-convexity is invariant under  linear mappings. More precisely, assume that  $F$ is a  $\theta$-convex subset of a real linear space $X$   with constant $\eta>0$ and
\[
F:=\{x\in X\colon  \|x\|_F\le 1\}.
\]
If $T: X\to Y$ is a  linear mapping onto another real linear space $Y$ such that $\ker(T)$ is closed with respect to the norm  $\|\cdot\|_F$, then the image $T(F) := \{Tx\colon x\in F\}$ is a $\theta$-convex subset of $Y$ with the same constant $\eta$ and  the associated norm on $Y$ given by
\[
\|y\|_{T(F)}:=\min\Bl\{ \|x\|_F\colon  x\in X,\  T(x)=y\Br\},\quad y\in Y.
\]
This invariance can be established using the fact that every uniformly convex
Banach space is reflexive.
\end{remark}

The entropy bound~\eqref{1-2-2025} provides a general estimate for chaining functionals in terms of entropy numbers. While this estimate is sufficiently strong for many purposes, it is often not optimal and may lose logarithmic factors. A substantial improvement is possible when the underlying set is uniformly convex (see, for instance, \cites{Tal,VH,VH1}). In particular, for $\theta$-convex sets one has the following refinement of~\eqref{1-2-2025}.

\begin{lemma}[{\cite{Tal}*{Theorem~4.1.4}, \cite{VH}*{Theorem~5.8}}]\label{lem-2-5}
Let $(X,\|\cdot\|)$ be a real Banach space, and let $\theta\ge2$. If $F\subset X$ is a $\theta$-convex set with constant $\eta>0$, then for every $\alpha\ge1$,
\begin{equation*}
\gamma_{\alpha,\theta}(F,\|\cdot\|)
\le
C(\alpha,\theta)\,\eta^{-1/\theta}\sup_{n\ge0} 2^{n/\alpha}e_n(F,\|\cdot\|).
\end{equation*}
\end{lemma}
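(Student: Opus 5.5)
The plan is to follow Talagrand's partitioning-scheme argument, with a functional built from the norm $\|\cdot\|_F$. Let $S:=\sup_{n\ge0}2^{n/\alpha}e_n(F,\|\cdot\|)$; we may assume $S<\infty$. The target is an admissible sequence $\{\mathcal A_n\}$ with
\[
\sup_{x\in F}\sum_{n\ge0}\bigl[2^{n/\alpha}\diam(\mathcal A_n(x),\|\cdot\|)\bigr]^\theta\le C(\alpha,\theta)\,\eta^{-1}S^\theta .
\]
On subsets $A\subset F$ define
\[
\Phi(A):=1-\inf\{\|y\|_F\colon y\in\mathrm{conv}A\}\in[0,1].
\]
This is monotone under inclusion and uniformly bounded by $1$. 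That uniform bound is the ``budget'' that uniform convexity lets us spend.

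\textbf{Step 1 (growth condition).} Let $t_1,\dots,t_L\in F$ be $a$-separated in $\|\cdot\|$, and let $H_l\subset B(t_l,a/4)\cap F$. Put $u:=1-\Phi(\bigcup H_l)$ and $u_l:=1-\Phi(H_l)\ge u$. Choose $y_l\in\mathrm{conv}H_l$ with $\|y_l\|_F\le u_l+\delta$. For $l\ne k$ the midpoint $(y_l+y_k)/2$ lies in $\mathrm{conv}\bigcup H_l$, so its norm is at least $u$. Since $\|y_l-y_k\|\ge a/2$, the $\theta$-convexity inequality (applied after rescaling by $\max\{\|y_l\|_F,\|y_k\|_F\}$) transfers to $\|\cdot\|$ distances. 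For this I expect to need the comparison between $\|\cdot\|_F$ and $\|\cdot\|$ on $F$: the relevant scale is $\diam(F,\|\cdot\|)\lesssim S$, so that $\|\cdot\|$-separation at scale $a$ yields $\|\cdot\|_F$-gain $\gtrsim\eta(a/S)^\theta$. It follows that $\max(u_l,u_k)\ge u+c\,\eta\,(a/S)^\theta$. Hence all but at most one index $l$ satisfy
\[
\Phi(H_l)\le\Phi\Bigl(\bigcup H_l\Bigr)-c\,\eta\,(a/S)^\theta .
\]
This is the usual growth condition, with one exceptional piece.

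\textbf{Step 2 (partitioning).} Run Talagrand's recursive construction. At level $n$, inside a piece $A\in\mathcal A_n$, greedily pick up to $N_{n}-1$ points that are $a_n$-separated, where $a_n:=C_0 S\,2^{-n/\alpha}$. By the entropy bound $e_n(F)\le S2^{-n/\alpha}$, the process either exhausts $A$ into balls of radius $\lesssim a_n$, or produces the maximal number of pieces, all but one of which carry a $\Phi$-decrease of $c\eta(a_n/S)^\theta\asymp\eta2^{-n\theta/\alpha}$. Along the chain $\mathcal A_n(x)$, we compare each $2^{n\theta/\alpha}\diam(\mathcal A_n(x))^\theta$ with $S^\theta$ times the $\Phi$-drop incurred at the next levels. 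Telescoping $\Phi\le1$ then gives the sum bound $\lesssim\eta^{-1}S^\theta$. The weights $2^{n\theta/\alpha}$ must be absorbed against the scale $a_n$, which is exactly why the level-dependent choice $a_n\propto2^{-n/\alpha}$ is made.

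\textbf{Main obstacle.} The main obstacle is the bookkeeping in Step 2 for the exceptional piece, where the functional does not decrease. Talagrand handles this with the ``$\min$ vs.\ one exceptional set'' trick, shifting the index $n\to n+1$ and using that the exceptional piece has diameter $\le a_n/2$. As a fallback, I would instead invoke van Handel's formulation (\cite{VH}*{Theorem~5.8}). There one writes $\gamma_{\alpha,\theta}$ through a contraction/interpolation functional and uses that $\theta$-convexity gives $\|\cdot\|$-diameter control of the sublevel sets $\{x\in F\colon\|x\|_F\ge 1-\tau\}$ relative to $\tau^{1/\theta}$. This yields the bound with $C(\alpha,\theta)\eta^{-1/\theta}$ directly.
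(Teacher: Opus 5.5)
The paper does not prove this lemma; it quotes it from Talagrand (Theorem 4.1.4) and van Handel (Theorem 5.8). Judged on its own terms, your plan has a genuine gap. It starts in Step~1 and carries through Step~2, and it is not the exceptional-piece bookkeeping you flag.

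\textbf{Step~1 and Step~2 are too weak.} The pairwise comparison $\|z\|\le S\|z\|_F$ (from $\sup_{f\in F}\|f\|\le e_0(F,\|\cdot\|)\le S$) sees only the level-$0$ entropy. It gives a gain $c\,\eta\,(a/S)^\theta$ with no dependence on $n$. That is the growth condition for the unweighted quantity $\sup_x\sum_n\diam(\mathcal A_n(x))^\theta$, not for $\gamma_{\alpha,\theta}$. What is needed is a gain of order $\eta\,(a\,2^{n/\alpha}/S)^\theta$, that is, $\eta\,(a/e_n(F,\|\cdot\|))^\theta$, which is larger than yours by the factor $2^{n\theta/\alpha}$.

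Step~2 shows the consequence. With the fixed scale $a_n=C_0S2^{-n/\alpha}$ and $C_0>2$, the hypothesis $e_n(F)\le S2^{-n/\alpha}$ already forces every $a_n$-separated subset of $F$ to have at most $N_n$ points. So the greedy selection is controlled by entropy alone, $\Phi$ plays no role, and at best you get $\diam(\mathcal A_n(x))\lesssim S2^{-n/\alpha}$. Every level then contributes $\asymp S^\theta$, which is just the Dudley-type sum $\sum_n[2^{n/\alpha}e_n]^\theta$ that the lemma is meant to beat. Your charging step does not close either: $\eta^{-1}S^\theta$ times a $\Phi$-drop of $\eta2^{-n\theta/\alpha}$ is $S^\theta2^{-n\theta/\alpha}$, not $S^\theta$.

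\textbf{The missing idea.} Use the number of pieces, not a pairwise estimate, together with the homogeneity of $F$. Take $L>N_n$ sets $H_l\subset B(t_l,a/4)\cap F$ with $a$-separated centers. Choose $y_l\in\mathrm{conv}H_l$ with $\|y_l\|_F\le u_l+\delta$, and suppose $u_l<u+\delta$ for every $l$.
\begin{itemize}
\item Each midpoint $(y_l+y_k)/2$ lies in $\mathrm{conv}\bigcup H_l$, so $\theta$-convexity gives $\eta\|y_l-y_k\|_F^\theta\le 2\delta$.
\item Hence all $y_l$ lie in $y_1+rF$ with $r=(2\delta/\eta)^{1/\theta}$.
\item The $y_l$ are $a/2$-separated in $\|\cdot\|$, and there are more than $N_n$ of them. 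Therefore $a/4\le e_n(y_1+rF,\|\cdot\|)=r\,e_n(F,\|\cdot\|)\le rS2^{-n/\alpha}$.
\item This gives $\delta\gtrsim\eta\,(a2^{n/\alpha}/S)^\theta$, that is, $\Phi(\bigcup H_l)\ge\min_l\Phi(H_l)+c\,\eta S^{-\theta}(a2^{n/\alpha})^\theta$.
\end{itemize}
This is the correct growth condition, and it needs no diameter comparison at all.

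Feed it into Talagrand's general partitioning theorem. That theorem uses the $\min$ form of the growth condition and separation scales adapted to the diameter of the current piece, not fixed at $S2^{-n/\alpha}$. Together with $\Phi\le1$ it yields $\gamma_{\alpha,\theta}(F,\|\cdot\|)^\theta\lesssim S^\theta/\eta$; the exceptional piece is handled routinely inside that theorem.

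\textbf{Your fallback.} Citing van Handel's Theorem~5.8 is fine, but your description of it is off. The sets $\{x\in F\colon\|x\|_F\ge1-\tau\}$ are shells containing antipodal points and have full diameter. What $\theta$-convexity actually gives is $A\subset y_A+rF$ with $r\sim\eta^{-1/\theta}\bigl(\sup_A\|\cdot\|_F-\inf_{\mathrm{conv}A}\|\cdot\|_F\bigr)^{1/\theta}$. This enters only through the same scaling step, $e_n(A)\le 2r\,e_n(F)$.
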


For $\theta$-convex sets one also has the following estimate for entropy numbers in the sampled $\ell_\infty$ norm (see \cite{Tal}*{Lemma~16.5.4}, \cite{Kos}*{Corollary~4.2}, \cite{Tem22}).

\begin{lemma}\label{cor-8-2}
Let $X$ be a linear space of real-valued functions on a set $\Omega$, and let $\theta\ge2$. Suppose that $F\subset X$ is a $\theta$-convex set with constant $\eta>0$, and that the point-evaluation functionals are continuous on $F$ with respect to the norm $\|\cdot\|_F$. Let $\pmb\xi=\{\xi_1,\dots,\xi_m\}\subset\Omega$ be a finite sequence of points in $\Omega$. Then
\[
e_n(F,\|\cdot\|_{L_\infty(\pmb\xi)})
\le
C(\theta,\eta)\Bigl(\sup_{f\in F}\|f\|_{L_\infty(\pmb\xi)}\Bigr)
\Bigl(\frac{\log m}{2^n}\Bigr)^{1/\theta},
\quad \forall\, n\in\mathbb N_0.
\]
\end{lemma}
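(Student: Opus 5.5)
The plan is to argue by duality, passing from covering numbers of $F$ in the sampled $\ell_\infty$ norm to covering numbers of a convex hull of $m$ points in the dual space, where Maurey's empirical method applies.

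\textbf{Step 1: the operator.} Set $M:=\sup_{f\in F}\|f\|_{L_\infty(\pmb\xi)}$. The quantity $e_n(F,\|\cdot\|_{L_\infty(\pmb\xi)})$ is, up to the factor $2$ in \eqref{3-1-0}, the $n$-th (dyadic) entropy number of the linear operator
\[
u\colon (Y,\|\cdot\|_F)\to \ell_\infty^m,\qquad u f:=(f(\xi_1),\dots,f(\xi_m)).
\]
By assumption each coordinate functional $\delta_j\colon f\mapsto f(\xi_j)$ is continuous on $Y$, and $\|\delta_j\|_{Y^*}\le M$.

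\textbf{Step 2: the dual operator.} The adjoint is $u^*\colon \ell_1^m\to Y^*$, $u^*(e_j)=\delta_j$. Hence $u^*(B_{\ell_1^m})$ is the absolute convex hull of the $m$ points $\pm\delta_j$, all lying in $M\cdot B_{Y^*}$.

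\textbf{Step 3: Maurey's lemma in $Y^*$.} Since $Y$ is $\theta$-convex with constant $\eta$, its dual $Y^*$ is $\theta'$-smooth. It therefore has Rademacher type $\theta'$, with type constant $C(\theta,\eta)$ depending only on $\theta$ and $\eta$; here I use the equivalent form of $\theta$-convexity from the Remark after Definition~\ref{def-4-8}. Maurey's empirical method then applies. Every point of $u^*(B_{\ell_1^m})$ is within $C(\theta,\eta)\,M\,k^{-1/\theta}$ of an average of $k$ signed vertices $\pm\delta_j$, because $1-1/\theta'=1/\theta$. There are at most $(2m)^k$ such averages. Taking $k\asymp 2^n/\log m$ gives
\[
e_n\bigl(u^*(B_{\ell_1^m}),\|\cdot\|_{Y^*}\bigr)\le C(\theta,\eta)\,M\Bigl(\frac{\log m}{2^n}\Bigr)^{1/\theta}
\]
when $2^n\ge \log m$. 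When $2^n<\log m$, the trivial bound $2M$ suffices.

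\textbf{Step 4: back to $u$.} Return to $u$ via the duality theorem for entropy numbers in the uniformly convex setting (Bourgain--Pajor--Szarek--Tomczak-Jaegermann). When the source space $Y$ is uniformly convex, $e_n(u)\le C(\theta,\eta)\,e_{n-c}(u^*)$, at least for polynomially decaying rates of the type $2^{-n/\theta}(\log m)^{1/\theta}$. Combining with Step~3 and absorbing the index shift into the constant yields the claim.

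\textbf{Main obstacle.} I expect Step~4 to be the hardest, since entropy duality is not known in general. If quoting it is unsatisfactory, I would instead follow Talagrand's direct route (the proof of \cite{Tal}*{Lemma~16.5.4}). That argument combines a Sudakov-type separation argument with the uniform convexity inequality. Suppose $f,g\in F$ are $\varepsilon$-separated in $L_\infty(\pmb\xi)$. By $\theta$-convexity, $(f+g)/2$ then has $\|\cdot\|_F$-norm at most $1-\eta(\varepsilon/M)^\theta$, because $\|f-g\|_F\ge \|f-g\|_{L_\infty(\pmb\xi)}/M$. Iterating this gain along a chain of successive halvings produces a packing bound of the form $\log\mathcal N_\varepsilon\lesssim (M/\varepsilon)^\theta\log m$, which is equivalent to the stated estimate.
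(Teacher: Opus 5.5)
The paper gives no proof of this lemma. It quotes it from \cite{Tal}*{Lemma~16.5.4} and \cite{Kos}*{Corollary~4.2}, where it is the special case of an entropy bound for the unit ball of a $\theta$-convex space measured against $m$ functionals of dual norm at most $M$. The functionals are the evaluations $\delta_{\xi_j}$, and $M=\sup_{f\in F}\|f\|_{L_\infty(\pmb\xi)}=\max_j\|\delta_{\xi_j}\|_{Y^*}$. That specialization is exactly your Steps 1--2. You go further and re-derive the abstract bound yourself: Maurey's empirical method in the $\theta'$-smooth dual $Y^*$, followed by entropy duality. This is a legitimate, standard route, and it shows where each ingredient comes from. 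The factor $\log m$ is the $(2m)^k$ count of empirical averages, and the exponent $1/\theta=1-1/\theta'$ is the type $\theta'$ of $Y^*$. The cost is that all the difficulty is moved into a deep duality theorem, which the citation route treats as a black box.

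Two points need fixing.

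\textbf{Step 4 is misstated.} The inequality you wrote, $e_n(u)\le C e_{n-c}(u^*)$, is not what Bourgain--Pajor--Szarek--Tomczak-Jaegermann prove for uniformly convex spaces. Their result is the weak form of duality: it compares $\sup_k k^{\alpha}\epsilon_k(u)$ with $\sup_k k^{\alpha}\epsilon_k(u^*)$, where $\epsilon_k$ are operator entropy numbers ($2^{k-1}$ balls), with constants depending on $\alpha$ and the modulus of convexity. That weak form is exactly enough, and it is why your trivial range matters. Step 3 gives $\epsilon_k(u^*)\le CM(\log(2m)/k)^{1/\theta}$ for $k\gtrsim\log(2m)$, and $\epsilon_k(u^*)\le\|u^*\|\le M$ for every $k$. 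Hence $\sup_k k^{1/\theta}\epsilon_k(u^*)\le CM(\log 2m)^{1/\theta}$. Apply duality with $\alpha=1/\theta$; make sure the version you invoke allows the uniformly convex space to be the domain $Y$ of $u$, equivalently the uniformly smooth $Y^*$ to be the range of $u^*$. This gives $\epsilon_k(u)\le C(\theta,\eta)M(\log(2m)/k)^{1/\theta}$. Taking $k=2^n+1$ and paying the factor $2$ from \eqref{3-1-0} yields the lemma.

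Your count naturally produces $\log(2m)$, and that is the right quantity. For $m=1$ the stated right-hand side vanishes, while $F(\pmb\xi)$ is an interval of length $2M$, so the statement must be read with $m\ge 2$.

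\textbf{The fallback paragraph is not a proof as sketched.} The midpoint gain $\eta(\varepsilon/M)^\theta$ uses only that each evaluation has norm at most $M$. It never uses that there are only $m$ evaluations. Convexity by itself cannot bound a norm-separated family in an infinite-dimensional $\theta$-convex ball; an orthonormal basis of $\ell_2$ is an example. The finite dimension of $F(\pmb\xi)$ alone gives only $\log\mathcal N_\varepsilon\lesssim m\log(1+2M/\varepsilon)$, not $\log m$. Some counting over the $m$ functionals, such as your $(2m)^k$ count in Step 3, has to enter. Either drop that paragraph or supply that step.
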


\section{Proof of Theorem \ref{Th-main}}\label{sec:4}

As explained in Subsection~\ref{subsect-main}, the proof of Theorem~\ref{Th-main} relies on the estimate provided by Theorem~\ref{thmGammaBound}. Since the proof of Theorem~\ref{thmGammaBound} is rather involved, we postpone it to Section~\ref{sec:proof-thmGammaBound}. Assuming Theorem~\ref{thmGammaBound}, we now proceed to prove  Theorem~\ref{Th-main}.

Recall that $\pmb\xi=(\xi^1,\dots,\xi^m)\in\Omega^m$, where $\xi^1,\dots,\xi^m$ are independent random points satisfying
\begin{equation}\label{5-1b}
  \mu(E)=\frac1m\sum_{j=1}^m \PP[\xi^j\in E],
  \quad \text{for every measurable } E\subset\Omega.
\end{equation}
For $1\le q\le\infty$, the seminorm $\|\cdot\|_{L_q(\pmb\xi)}$ is defined in~\eqref{1-9-2025}. For $1\le p<\infty$ and $F\subset B(\Omega)$, we write
\begin{equation}\label{4-1a}
\Er_p(F,\pmb\xi):=
\sup_{f\in F}
\Biggl|
\frac1m\sum_{j=1}^m |f(\xi^j)|^p
-
\|f\|_{L_p(\mu)}^p
\Biggr|.
\end{equation}

The proof of Theorem~\ref{Th-main} is divided into four steps, which are carried out in the next four subsections.
In Step 1, we combine a standard symmetrization argument with Talagrand's majorizing measure theorem to obtain
\begin{equation}\label{4-7-eq}
\EE \Er_p(F,\pmb\xi)
\le
\frac{C(p)}{m}\,
\EE_{\pmb\xi}\,
\Bl[\gamma_{p,1}\bigl(T_p(F(\pmb\xi)),\|\cdot\|_{p'}\bigr)\Br].
\end{equation}
In Step 2, we establish Theorem~\ref{Th-main} for the case $q=1$ by applying ~\eqref{4-7-eq} and optimizing the parameters in Theorem~\ref{thmGammaBound}.
In Step 3, we extend this result to $q>1$ via Talagrand's concentration inequality (Lemma~\ref{thm-9-3}), which bounds  the $L_q(\mu)$-norm of Banach space-valued random elements by their  $L_1(\mu)$-norm. Finally, in Step 4, we deduce the probability estimate~\eqref{eq-probab-est-main}.

\subsection{Step 1: Symmetrization}

Let $\pmb\va=(\va_1,\dots,\va_m)$ be a sequence of i.i.d.\!\! random variables, each taking the values $\pm1$ with probability $1/2$, and assume that $\pmb\va$ is independent of the random points $\xi^1,\dots,\xi^m$.

In this step, we use a standard Gin\'e--Zinn symmetrization argument (see, for instance, \cite{Tal}*{Lemma~9.1.11}, \cite{GR}, and \cite{Kos}*{Lemma~3.1}) to obtain the following estimate.

\begin{lemma}\label{lem-5-2}
For every subset $F\subset B(\Omega)$, one has
\[
\EE \Er_p(F,\pmb\xi)
\le \frac{2}{m}\,
\EE \sup_{f\in F}\biggl|\sum_{j=1}^m |f(\xi^j)|^p \va_j\biggr|.
\]
\end{lemma}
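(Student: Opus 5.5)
We prove Lemma~\ref{lem-5-2} by the standard Giné--Zinn symmetrization, adapted to independent but not necessarily identically distributed points satisfying \eqref{5-1b}. The first observation is that, by \eqref{5-1b}, for every bounded measurable $f$,
\[
\|f\|_{L_p(\mu)}^p=\frac1m\sum_{j=1}^m \EE|f(\xi^j)|^p .
\]
So the quantity inside the supremum in \eqref{4-1a} is $\frac1m\sum_j\bigl(|f(\xi^j)|^p-\EE|f(\xi^j)|^p\bigr)$, which is a centered sum of independent terms.

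Next we introduce an independent copy $\tilde{\pmb\xi}=(\tilde\xi^1,\dots,\tilde\xi^m)$ of $\pmb\xi$, with $\tilde\xi^j$ distributed as $\xi^j$ and everything mutually independent. We write $\EE|f(\xi^j)|^p=\EE_{\tilde{\pmb\xi}}|f(\tilde\xi^j)|^p$, pull $\EE_{\tilde{\pmb\xi}}$ outside the absolute value and the supremum (Jensen and $\sup\EE\le\EE\sup$), and obtain
\[
\EE\Er_p(F,\pmb\xi)\le \frac1m\,\EE_{\pmb\xi,\tilde{\pmb\xi}}\sup_{f\in F}\Bigl|\sum_{j=1}^m\bigl(|f(\xi^j)|^p-|f(\tilde\xi^j)|^p\bigr)\Bigr|.
\]

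For each fixed $j$, the pair $(\xi^j,\tilde\xi^j)$ is exchangeable, and the pairs are independent across $j$. Hence the joint law of the vector of differences $\bigl(|f(\xi^j)|^p-|f(\tilde\xi^j)|^p\bigr)_{j}$, taken jointly over $f\in F$, is unchanged when the $j$-th coordinate is multiplied by $\va_j$, for any fixed signs. Averaging over the Rademacher vector $\pmb\va$ independent of everything, we may insert $\va_j$ in front of each difference. The triangle inequality then splits the sum into a $\xi$-part and a $\tilde\xi$-part, and these have equal expectations. This yields the factor $2/m$ and the claimed bound.

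The only point requiring care is measurability of the supremum over a possibly uncountable $F$. The standard remedy is to interpret $\EE\sup$ as the supremum over finite subfamilies (or as an outer expectation). With that convention every step above is applied to finite $F$, and one then passes to the supremum over finite subsets. There is no real obstacle; the non-identical distributions are harmless because symmetrization acts coordinatewise.
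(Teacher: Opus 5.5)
Your proposal is correct and follows essentially the same route as the paper. Both arguments introduce an independent copy of $\pmb\xi$ and use the averaging condition on the distributions to write $\|f\|_{L_p(\mu)}^p$ as an expectation over that copy. Both then apply Jensen, insert Rademacher signs via coordinatewise exchangeability of the pairs, and split by the triangle inequality to get the factor $2/m$. Your remark on interpreting $\EE\sup$ over uncountable $F$ is an extra point of care that the paper leaves implicit.
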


For completeness, we provide the proof below.

\begin{proof}
Let $\pmb\eta=(\eta^1,\dots,\eta^m)$ be an independent copy of $\pmb\xi=(\xi^1,\dots,\xi^m)$ that is also independent of $\pmb\va=(\va_1,\dots,\va_m)$. By~\eqref{5-1b}, for every $f\in B(\Omega)$,
\[
\frac{1}{m}\sum_{j=1}^m |f(\xi^j)|^p-\|f\|_{L_p(\mu)}^p
=
\frac{1}{m}\,\EE_{\pmb\eta}\Bigl[\sum_{j=1}^m\bigl(|f(\xi^j)|^p-|f(\eta^j)|^p\bigr)\Bigr].
\]
Therefore,
\begin{align*}
m\,\EE \Er_p(F,\pmb\xi)
&=
\EE_{\pmb\xi}\sup_{f\in F}
\biggl|
\EE_{\pmb\eta}\Bigl[\sum_{j=1}^m\bigl(|f(\xi^j)|^p-|f(\eta^j)|^p\bigr)\Bigr]
\biggr| \le
\EE_{\pmb\xi}\EE_{\pmb\eta}
\sup_{f\in F}
\biggl|
\sum_{j=1}^m\bigl(|f(\xi^j)|^p-|f(\eta^j)|^p\bigr)
\biggr|.
\end{align*}
By symmetry and independence,
\[
\EE_{\pmb\xi}\EE_{\pmb\eta}
\sup_{f\in F}
\biggl|
\sum_{j=1}^m\bigl(|f(\xi^j)|^p-|f(\eta^j)|^p\bigr)
\biggr|
=
\EE_{\pmb\va}\EE_{\pmb\xi}\EE_{\pmb\eta}
\sup_{f\in F}
\biggl|
\sum_{j=1}^m\bigl(|f(\xi^j)|^p-|f(\eta^j)|^p\bigr)\va_j
\biggr|.
\]
Hence,
\begin{align*}
m\,\EE \Er_p(F,\pmb\xi)
&\le
\EE_{\pmb\xi}\EE_{\pmb\eta}\EE_{\pmb\va}
\sup_{f\in F}
\biggl|
\sum_{j=1}^m\bigl(|f(\xi^j)|^p-|f(\eta^j)|^p\bigr)\va_j
\biggr| \le
2\,\EE
\sup_{f\in F}
\biggl|
\sum_{j=1}^m |f(\xi^j)|^p \va_j
\biggr|.
\end{align*}
This proves the lemma.
\end{proof}

By Lemma~\ref{lem-5-2},
\begin{equation}\label{5-2b}
  \EE \Er_p(F,\pmb\xi)
  \le
  \frac{2}{m}\,\EE_{\pmb\xi}\EE_{\pmb\va}
  \Bigl[\sup_{x\in T_p(F(\pmb\xi))}|W_x|\Bigr]
  =
  \frac{2}{m}\,\EE_{\pmb\xi}\EE_{\pmb\va}
  \Bigl[\sup_{x\in T_p(F(\pmb\xi))}|W_x-W_0|\Bigr],
\end{equation}
where
\[
W_x:=\sum_{j=1}^m \va_j x(j),
\quad x\in T_p(F(\pmb\xi))\subset\RR^m.
\]
Now we fix the points $\xi^1,\dots,\xi^m\in\Omega$ and apply Theorem~\ref{thm-1-3} to the process
\[
\{W_x\colon x\in T_p(F(\pmb\xi))\}.
\]
To do so, we need to verify the increment condition~\eqref{exp}. This follows from the following tail estimate for Bernoulli processes.

\begin{lemma}[{\cite{LedTal}*{Lemma~4.3}}]\label{TailsEst}
Let $\va_1,\dots,\va_m$
be a sequence of i.i.d.\!\! random variables, each taking the values $\pm1$ with probability $1/2$.
Then for every $p\in[2,\infty)$ there exists a constant $c(p)>0$ such that for every $\mathbf a=(a_1,\dots,a_m)\in\RR^m$ and every $t>0$,
\[
\PP\biggl(\biggl|\sum_{j=1}^m a_j\va_j\biggr|\ge t\|\mathbf a\|_{p'}\biggr)
\le 2e^{-c(p)t^p}.
\]
\end{lemma}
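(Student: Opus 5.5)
This is a classical subgaussian-type tail bound for Rademacher sums with the $\ell_{p'}$ norm, where $1<p'\le 2$. My plan is to interpolate between the trivial bound $|\sum a_j\va_j|\le\|\mathbf a\|_1$ and Hoeffding's inequality, using a splitting of the coefficient vector in the spirit of Montgomery-Smith. By homogeneity I will assume $\|\mathbf a\|_{p'}=1$ and, replacing $a_j$ by $|a_j|$ (which does not change the distribution), that $a_1\ge a_2\ge\dots\ge a_m\ge 0$. If $t$ is bounded by a constant $t_0(p)$, the inequality holds trivially once $c(p)$ is small enough that $2e^{-c t_0^p}\ge 1$. So I will only treat large $t$.

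For a cutoff $k\in\NN$, I split $\mathbf a$ into the head $(a_1,\dots,a_k)$ and the tail $(a_{k+1},\dots,a_m)$.

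\emph{Head.} By Hölder, the head contributes at most
\[
\sum_{j\le k}a_j\le k^{1/p}\Bigl(\sum_{j\le k}a_j^{p'}\Bigr)^{1/p'}\le k^{1/p}.
\]

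\emph{Tail.} Monotonicity and $\|\mathbf a\|_{p'}=1$ give $a_j\le j^{-1/p'}$. Hence
\[
\sum_{j>k}a_j^2\le \sum_{j>k}a_j^{p'}\,a_j^{2-p'}\le k^{-(2-p')/p'}=k^{1-2/p'}=k^{-(1-2/p)}.
\]
Writing $k^{-(1-2/p)}=k^{(2-p)/p}$, Hoeffding's inequality yields
\[
\PP\Bigl(\Bigl|\sum_{j>k}a_j\va_j\Bigr|\ge s\Bigr)\le 2\exp\Bigl(-\frac{s^2}{2}\,k^{(p-2)/p}\Bigr).
\]

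\emph{Choice of $k$.} I take $k=\lfloor (t/2)^p\rfloor$, so the head contributes at most $t/2$. I then apply the tail bound with $s=t/2$. The exponent becomes
\[
\frac{t^2}{8}\,k^{(p-2)/p}\ \gtrsim\ t^2\cdot t^{p-2}=t^p,
\]
which gives $\PP(|\sum a_j\va_j|\ge t)\le 2e^{-c(p)t^p}$.

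\emph{Edge cases.} If $k\ge m$, the tail is empty and the probability is zero. For small $t$, where $k$ would be $0$, the trivial case above applies.

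The only point I expect to need care is the floor in $k$: I must check that $k\ge (t/2)^p/2$ for $t\ge t_0$, so that the exponent really is comparable to $t^p$, and then adjust the constant $c(p)$ accordingly. When $p=2$, the bound is just Hoeffding's inequality directly, with $k=0$.
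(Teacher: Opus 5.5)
Your proof is correct, and it is essentially the standard argument behind the result the paper cites without proof (Ledoux--Talagrand, Lemma~4.3). After ordering, the roughly $t^p$ largest coefficients are bounded deterministically by $k^{1/p}\le t/2$; the remaining $\ell_2$ mass is at most $k^{-(1-2/p)}$ because $a_j\le j^{-1/p'}$; and Hoeffding's inequality on the tail gives an exponent of order $t^2k^{1-2/p}\asymp t^p$.

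The only, cosmetic, caveat is the degenerate case $\mathbf a=0$: there the event $\{0\ge 0\}$ has probability one, so both the statement and your normalization $\|\mathbf a\|_{p'}=1$ tacitly assume $\mathbf a\neq 0$.
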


Therefore, by Theorem~\ref{thm-1-3} and Lemma~\ref{TailsEst}, there exists a constant $C(p)>0$ such that
\[
\EE_{\pmb\va}\Bigl[\sup_{x\in T_p(F(\pmb\xi))}|W_x-W_0|\Bigr]
\le
C(p)\,\gamma_{p,1}\bigl(T_p(F(\pmb\xi)),\|\cdot\|_{p'}\bigr).
\]
Combining this with~\eqref{5-2b}, we obtain
\begin{equation}\label{5-8b}
  \EE \Er_p(F,\pmb\xi)
  \le
  \frac{C(p)}{m}\,
  \EE_{\pmb\xi}\Bl[\gamma_{p,1}\bigl(T_p(F(\pmb\xi)),\|\cdot\|_{p'}\bigr)\Br].
\end{equation}

\subsection{Step 2: Deriving Theorem \ref{Th-main} for \texorpdfstring{$q=1$}{q=1} from Theorem \ref{thmGammaBound}}\label{step3}
Let $q=1$. For simplicity, we define
$m_1:=\frac{m}{m_0}$, and
\begin{equation}\label{4-5d}
  \delta:=\frac{H}{m_1\log m_1},
\end{equation}
where $H$ is the constant from the estimates~\eqref{1-11a} and~\eqref{1-11a-0} for $q=1$.
Since $m_1\ge 16$, by~\eqref{1-11b}, we obtain
\begin{equation}\label{4-5c}
  0<\delta\le \frac{1}{(\log m_1)\cdot e\,(e\log\log m_1)^{p/p_0'}}
  < e^{-1-p/p_0'}.
\end{equation}

We now apply Theorem~\ref{thmGammaBound} with the set $G = F(\pmb{\xi})$
for a fixed $\pmb{\xi}=(\xi^1,\dots,\xi^m)$.
Let $0<b<1$, $\alpha_0 \in (0,1)$, and $n_0 \in \mathbb{N} \cap [2,\infty)$ be parameters to be specified later.  Since $b\,\alpha_0^{1/p}\le n_0$,
Theorem~\ref{thmGammaBound} yields
\begin{align*}
&\ga_{p,1}\bigl(T_p( F(\pmb{\xi}) ), \|\cdot\|_{p'}\bigr)
\le C(p)\Biggl[ A_{m_1}(n_0, b, \al_0)\, m\,  \sup_{f \in F} \|f\|_{L_p( \pmb{\xi})}^p
\\
&+ \f m {m_1^{p_0/p}}\cdot \sup_{f\in F}\|f\|_{L_\infty(\pmb\xi)}^{p-p_0}\cdot
\f{\Bigl[{\rm diam}\bigl(F, \|\cdot\|_{L_{p_1}( \pmb{\xi})}\,\bigr)\Bigr]^{p_0}
+ \Bigl[m_0^{-1/p}\ga_{p,p_0}\,\bigl(F, \|\cdot\|_{L_{p_1}(\pmb{\xi})}\,\bigr)\Bigr]^{p_0}}{b^{p_0-1}}\Biggr],
\end{align*}
where
\[
A_{m_1}(n_0, b, \al_0)
:= bn_0 + (2^{n_0}\, \al_0)^{-1/p} \log m_1.
\]
Since $0\in F$, we have
\[
{\rm diam}\bigl(F,\|\cdot\|_{L_{p_1}(\pmb{\xi})}\bigr)
\le2
\sup_{g\in F}\|g\|_{L_{p_1}(\pmb{\xi})}.
\]
Thus, substituting  into \eqref{5-8b}, and using \eqref{1-11a} and \eqref{1-11a-0} with $q=1$, we deduce
\begin{align*}
&\EE \Er_p(F, \pmb{\xi}) \le {C_1(p)}\Bigg[  A_{m_1}(n_0, b, \al_0)\,\EE  \bigl[\sup_{f \in F} \|f\|_{L_p( \pmb{\xi})}^p\bigr]
+  \Bl(\f H {m_1}\Br)^{p_0/p}\cdot b^{1-p_0}\Bigg].
\end{align*}
Moreover, by  \eqref{4-1a},
\[
\sup_{f \in F} \|f\|_{L_p( \pmb{\xi})}^p=\sup_{f\in F} \frac{1}{m}\sum_{j=1}^m |f(\xi^j)|^p
\le \Er_p(F, \pmb{\xi}) + \sup_{f \in F} \|f\|_{L_p(\mu)}^p.
\]
Thus, setting $L := \sup\limits_{f \in F} \|f\|_{L_p(\mu)}^p + 1$ and using \eqref{4-5d}, we obtain
\begin{align}\label{5-6c}
\EE \Er_p(F, \pmb{\xi})
   &\le C_1(p) \Bigg[  A_{m_1}(n_0, b, \al_0)
      \Bigl(\EE \Er_p(F, \pmb{\xi})+ \sup_{f \in F} \|f\|_{L_p(\mu)}^p\Bigr)
      +\f{\da^{p_0/p} \cdot (\log m_1)^{p_0/p}}{ b^{p_0-1}}\Bigg]\\
  &\le C_1(p) A_{m_1}(n_0, b, \al_0)
\EE \Er_p(F, \pmb{\xi})+  C_1(p) L\, B_{m_1}(n_0, b, \al_0),\notag
\end{align}
where
\beq
B_{m_1}(n_0, b, \al_0)
  := b n_0 + (2^{n_0}\, \al_0)^{-1/p} \log m_1
     + \f{ \da^{p_0/p} \cdot (\log m_1)^{p_0/p}}{b^{p_0-1}}.\label{5-15c}
\enq

We will choose the parameters $n_0, b, \al_0$  so that
\begin{equation}\label{5-6b}
 A_{m_1}(n_0, b, \al_0)
  = bn_0
      + (2^{n_0}\, \al_0)^{-1/p} \log m_1
  \le \frac{1}{2C_1(p)},
\end{equation}
which,  combined with \eqref{5-6c},  will yield
\[
\EE \Er_p(F, \pmb{\xi})  \le 2 C_1(p) L\, B_{m_1}(n_0, b, \al_0).
\]

We now specify the parameters $b, \al_0, n_0$  by minimizing the function $B_{m_1}(n_0, b, \al_0)$ in   \eqref{5-15c},  subject to the constraint \eqref{5-6b}.
Let $c=c(p)\in (0, 1/2)$ be a small constant (depending only on~$p$) to be specified later.
First, choose
\beq
b := c\cdot \frac{\da^{1/p} (\log m_1)^{1/p}}{n_0^{1/p_0}}
\label{4-12a}
\enq
so that $
b n_0 =\f{c^{p_0}  \, \da^{p_0/p}\, (\log m_1)^{p_0/p}}{b^{p_0-1}}$,  and hence
\eq{
B_{m_1}(n_0, b, \al_0)
   &=(c+ c^{1-p_0})(\da \log m_1)^{1/p} n_0^{1/p_0'}
+ (2^{n_0}\, \al_0)^{-1/p} \log m_1\\
&\le 2 c^{-(p_0-1)} (\da \log m_1)^{1/p} n_0^{1/p_0'}
+ (2^{n_0}\, \al_0)^{-1/p} \log m_1.}
Now, setting $\al_0 := ( \log m_1)^{-1}$, we obtain
\[
B_{m_1}(n_0, b, \al_0)
   \le  (\log m_1)^{1/p}
        \left[ 2^{-n_0/p} \log m_1 + 2c^{1-p_0}\da^{1/p} n_0^{1/p_0'} \right].
\]
Second, to balance these last two terms in the brackets, we require that  $2^{-n_0/p} \log m_1 \sim \da^{1/p}$, that is,
$2^{n_0} \sim \da^{-1} (\log m_1)^p$. More precisely, choose  $n_0 \in \NN$ such that
\[
2^{n_0-1} \le c^{-p} \da^{-1} (\log m_1)^p \le 2^{n_0 }.
\]
Using  \eqref{4-5c}, we have \beq\label{5-21-c} n_0 \le C_2(p)  \Bl[ \log \da^{-1} + \log \log m_1+\log c^{-1}\Br]\le C_3(p) |\log c| \cdot|\log \da|,\enq
and, {since $p_0\le p$ and $\frac{1}{p_0'}\le \frac{1}{p'}$},
\begin{align*}
B_{m_1}(n_0, b, \al_0)
   &\le 3 c^{1-p_0} (\log m_1)^{1/p} \da^{1/p} n_0^{1/p_0'}\le C_4(p, c)\, |\log\da|^{ 1/p_0'}\,  (\da\cdot \log m_1)^{1/p}.
\end{align*}
Thus, assuming the chosen parameters satisfy \eqref{5-6b}, we obtain
\[
\EE \Er_p(F, \pmb{\xi})  \le C_5(p, c)L\cdot \bigl( \da\cdot \log m_1\bigr)^{1/p}\cdot | \log  \da|^{1/p_0'},
\]
which  combined with \eqref{4-5d}, gives us the desired result:
\[ \EE \Er_p(F, \pmb{\xi}) \le  C_5(p, c) \Bl(\f {Hm_0} m\Br)^{\f1p} \Bl(\log\log \frac{m}{m_0} + \log \f m{Hm_0}\Br)^{1-\f1{p_0}}\Bigl(1+\sup\limits_{f \in F} \|f\|_{L_p(\mu)}^p\Bigr).
\]

Thus, it remains to show that \eqref{5-6b} holds for a sufficiently small constant $c \in (0, 1/2)$.
 Indeed, from \eqref{4-12a} and \eqref{5-21-c},  we get
\begin{align*}
A_{m_1}(n_0, b,\al_0)&:=b n_0 + 2^{-n_0/p}\, (\log m_1)^{1+\f1p}\le c n_0^{1/p_0'}\da^{1/p}( \log m_1)^{1/p}
+  c \da^{1/p} (\log m_1)^{1/p} \\
   &\le 2c \da^{1/p}  (\log m_1)^{1/p}\, n_0^{1/p_0'}\le C_6(p) c |\log c|^{1/p_0'}
\cdot \Bl(  \da \cdot   |\log \da|^{p/p_0'}\, \log m_1\Br)^{1/p}.
\end{align*}
Since the function $x|\log x|^{p/p_0'}$ is increasing on $(0, e^{-p/p_0'})$,
it follows by \eqref{4-5c} that
\begin{equation*}
\da \cdot  |\log \da|^{p/p_0'} \le \f { (\log\log m_1 {+1+p} +p\log\log \log m_1)^{p/p_0'}} { (\log m_1) \cdot (\log\log m_1)^{p/p_0'}}\le \f {C_7(p)}{\log m_1}.
\end{equation*}
Hence, we have
\begin{equation*}
A_{m_1}(n_0, b,\al_0) \le  C_8(p) c  |\log c|^{1/p_0'}\le C_8(p) c  |\log c|^{1/p'}.
\end{equation*}
Finally, we choose $c = c(p) \in (0,1)$ sufficiently small so that \eqref{5-6b} holds.
This completes the proof of Theorem~\ref{Th-main} for $q = 1$.

\subsection{Step 3: Deriving Theorem \ref{Th-main} for \texorpdfstring{$q>1$}{q>1}}\label{step4}
Having established Theorem~\ref{Th-main} for $q = 1$, we now use this result to prove the case $q > 1$. For this purpose, we require the following lemma.
\begin{lemma}\label{lem-9-1}
Let  $1 \le p, q < \infty$, and let $\pmb{\xi}=(\xi^1,\ldots, \xi^m)$ be a {collection} of $m$ independent random elements in $ \Omega$. Assume that $F$ is a nonempty subset of $B(\Og)$ satisfying
\begin{equation}\label{9-1a}
  \Bigl( \EE \max_{1 \le j \le m} \sup_{f \in F} |f(\xi^j)|^{pq} \Bigr)^{1/q} \le H_0 < \infty.
\end{equation}
{There} exists a universal constant $C > 0$ such that
\begin{align*}
\left( \EE|\Er_p(F, \pmb{\xi})|^q \right)^{1/q}\le \frac{Cq}{1+\log q}\max\Bl\{  \EE \Er_p(F, \pmb{\xi}),\   \frac{H_0}{m}\Br\}.
\end{align*}
\end{lemma}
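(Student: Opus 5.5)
The plan is to view $\Er_p(F,\pmb\xi)$ as the norm of a sum of independent Banach-space-valued random elements and apply Talagrand's moment inequality for such sums, the result the paper refers to as Lemma~\ref{thm-9-3}. For each $j$, let $Y_j\in\ell_\infty(F)$ be the function
\[
Y_j(f):=\frac1m\Bigl(|f(\xi^j)|^p-\EE|f(\xi^j)|^p\Bigr),\qquad f\in F.
\]
By \eqref{probab-cond}, $\frac1m\sum_j \EE|f(\xi^j)|^p=\|f\|_{L_p(\mu)}^p$. Therefore
\[
\Er_p(F,\pmb\xi)=\Bigl\|\sum_{j=1}^m Y_j\Bigr\|_{\ell_\infty(F)},
\]
and the $Y_j$ are independent and mean zero. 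Talagrand's inequality in the form of Hoffmann-J{\o}rgensen moment estimates with the sharp constant $q/\log q$ gives
\[
\Bigl(\EE\Bigl\|\sum_j Y_j\Bigr\|^q\Bigr)^{1/q}\le \frac{Cq}{1+\log q}\Bigl(\EE\Bigl\|\sum_j Y_j\Bigr\| +\bigl(\EE\max_j\|Y_j\|^q\bigr)^{1/q}\Bigr).
\]
I assume Lemma~\ref{thm-9-3} is essentially this statement. If it is stated only for symmetric summands, I would first symmetrize with Rademacher signs, which costs a constant factor, and then desymmetrize the first moment term in the usual way.

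The remaining step is to bound the maximal term by $C H_0/m$. We have
\[
\|Y_j\|\le \frac1m\Bigl(\sup_{f\in F}|f(\xi^j)|^p+\sup_{f\in F}\EE|f(\xi^j)|^p\Bigr).
\]
The first part is bounded by $\frac1m\max_j\sup_f|f(\xi^j)|^p$, and by \eqref{9-1a} its $L_q$ norm is at most $H_0/m$.

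The deterministic part $\frac1m\max_j\sup_f\EE|f(\xi^j)|^p$ is the step I expect to be delicate, because the points need not be identically distributed. I would bound it by
\[
\frac1m\max_j\EE\sup_f|f(\xi^j)|^p\le\frac1m\EE\max_j\sup_f|f(\xi^j)|^p\le \frac{H_0}{m},
\]
using Jensen and $q\ge1$. This gives $(\EE\max_j\|Y_j\|^q)^{1/q}\le 2H_0/m$.

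Combining the two bounds and replacing a sum by twice the maximum yields the claim. The only point needing care is confirming that the constant in Lemma~\ref{thm-9-3} has the growth $q/(1+\log q)$ uniformly for $q\ge1$. For $q$ near $1$ the bound is trivial by Hölder, so it suffices to treat large $q$, where Talagrand's $q/\log q$ bound applies directly.
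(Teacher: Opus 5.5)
Your proposal is correct and is essentially the paper's proof. It uses the same mean-zero $\ell_\infty(F)$-valued summands $\frac1m(|f(\xi^j)|^p-\EE|f(\xi^j)|^p)$ and the same appeal to Lemma~\ref{thm-9-3}, which the paper states for mean-zero summands with constant $Cq/(1+\log q)$ for every $q\ge1$, so neither symmetrization nor a separate small-$q$ argument is needed. It also gets the same $2H_0/m$ bound on the maximal term, and your Jensen step for the deterministic part fills in a detail the paper leaves implicit.

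Your closing aside is slightly off: H\"older gives $\EE|X|\le(\EE|X|^q)^{1/q}$ rather than the reverse, so it would not by itself handle $q$ near $1$. This is moot, however, given the uniform form of Lemma~\ref{thm-9-3}.
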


The proof of Lemma \ref{lem-9-1} relies on the following concentration inequality of Talagrand.

\begin{lemma}[{\cite{Tal89}*{Theorem 1}}]\label{thm-9-3}
Let $\{\vi_j\}_{j=1}^m$ be a sequence of independent, mean-zero random elements taking values in a Banach space $(V,\|\cdot\|_V)$ such that $\EE \|\vi_j\|_V<\infty$ for $1\le j\le m$. Then for every $q\ge 1$ we have
\[
\Biggl( \EE \Bl\|\sum_{j=1}^m \vi_j\Br\|_V^q \Biggr)^{1/q} \le \frac{Cq}{1+\log q}\Biggl (\EE \Bl\|\sum_{j=1}^m  \vi_j \Br\|_V+\left(\EE \max _{1\le j\le m}  \| \vi_j\|_V^q  \right)^{1/q}\Biggr),
\]
where $C$ is a universal constant.
\end{lemma}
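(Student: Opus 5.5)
The plan is to reduce to the symmetric case and then derive moment bounds from a tail inequality that has the sharp dependence on~$q$. For the reduction, let $\{\vi_j'\}$ be an independent copy of $\{\vi_j\}$ and $\{\va_j\}$ an independent Rademacher sequence. Because the $\vi_j$ are mean zero, Jensen's inequality (conditioning on $\{\vi_j\}$) gives
\[
\Bigl(\EE\Bl\|\sum_j\vi_j\Br\|_V^q\Bigr)^{1/q}\le \Bigl(\EE\Bl\|\sum_j(\vi_j-\vi_j')\Br\|_V^q\Bigr)^{1/q}.
\]
The summands $\psi_j:=\vi_j-\vi_j'$ are independent and symmetric. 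They satisfy
\[
\EE\Bl\|\sum_j\psi_j\Br\|_V\le 2\,\EE\Bl\|\sum_j\vi_j\Br\|_V,\qquad \EE\max_j\|\psi_j\|_V^q\le 2^q\,\EE\max_j\|\vi_j\|_V^q .
\]
So it suffices to prove the inequality for independent symmetric summands. Up to absolute constants this means proving it for $S=\sum_j\va_j x_j$, first conditionally on the $x_j$ and then integrating.

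For symmetric sums the key tool is Talagrand's isoperimetric inequality for product measures, in its ``$k$ points control'' form, or equivalently the Ledoux--Talagrand tail bound. Let $M:=\EE\|S\|_V$, and let $\|\psi\|^*_{(1)}\ge\|\psi\|^*_{(2)}\ge\dots$ be the nonincreasing rearrangement of $\{\|\psi_j\|_V\}$. The bound I would use is: for all integers $k\ge q_0\ge 2$ and $s>0$,
\[
\PP\Bigl(\|S\|_V\ge C q_0 M+\sum_{i\le k}\|\psi\|^*_{(i)}+s\Bigr)\le \Bigl(\frac{C}{q_0}\Bigr)^{k}+\exp\Bigl(-\frac{s^2}{C q_0 M^2}\Bigr).
\]
To obtain it, I would apply the isoperimetric inequality to the set $A=\{\|S\|_V\le 2M\}$, which has probability at least $1/2$. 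Any point that is controlled by $q_0$ points of $A$ except on $k$ coordinates satisfies $\|S\|_V\le q_0\cdot 2M$ plus the contribution of the $k$ exceptional coordinates. That contribution is at most the sum of the $k$ largest $\|\psi_j\|_V$ (using symmetry and a sign-averaging over the controlling points).

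To turn the tail bound into moment bounds, I estimate $\sum_{i\le k}\|\psi\|^*_{(i)}\le k\max_j\|\psi_j\|_V$ and integrate the tail. Choosing $q_0\asymp q/\log q$ and $k\asymp q/\log q$ makes the factor $(C/q_0)^{k}$ small enough to absorb the $q$-th power, since $(C/q_0)^k\approx e^{-q}$. Integrating then yields
\[
\bigl(\EE\|S\|_V^q\bigr)^{1/q}\le C\Bigl(\frac{q}{\log q}M+\frac{q}{\log q}\bigl(\EE\max_j\|\psi_j\|_V^q\bigr)^{1/q}\Bigr).
\]
The sub-Gaussian term contributes only $\sqrt{q/\log q}\,M$, which is harmless. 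This gives the stated form with factor $Cq/(1+\log q)$; for $q$ bounded, a Hoffmann-J\o rgensen iteration already suffices.

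The main obstacle is obtaining the sharp factor $q/\log q$ rather than $q$. The naive Hoffmann-J\o rgensen iteration gives only the factor $q$. The improvement must come from the $(C/q_0)^k$ decay in the isoperimetric ``$k$ points control'' estimate, so the step I expect to be hardest, and would check most carefully, is deriving that estimate in the Banach-valued symmetric setting with the correct dependence on $q_0$. As a fallback I would use the Johnson--Schechtman--Zinn approach: a good-$\lambda$/Rosenthal-type comparison combined with a decoupling into the $k$ largest terms and the remainder, which is known to give the same $q/\log q$ constant.
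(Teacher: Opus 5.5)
The paper does not prove this lemma; it imports it from Talagrand [Tal89, Theorem~1]. Your proposal therefore has to stand on its own. Your route is the standard one for this result: symmetrize, use the isoperimetric ``control by $q_0$ points'' tail bound, then pass to moments. However, the parameter choice you make does not give the factor $q/(1+\log q)$.

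In your tail bound the sub-Gaussian part has variance proxy $Cq_0M^2$. Its contribution to the $q$-th moment, equivalently the level $s$ at which $\exp(-s^2/(Cq_0M^2))\le e^{-q}$, is therefore of order $\sqrt{q\,q_0}\,M$, not $\sqrt{q_0}\,M$. With $q_0\asymp q/\log q$ this is $qM/\sqrt{\log q}$, which exceeds $qM/\log q$ by a factor $\sqrt{\log q}$. As written, the argument proves the inequality only with the factor $q/\sqrt{\log q}$.

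The fix is to decouple the two parameters: take $q_0\asymp q/(\log q)^2$ and $k\asymp q/\log q$, so that $k\ge q_0$. Since $\log(q_0/C)\gtrsim\log q$ still holds, you still get $(C/q_0)^k\le e^{-c q}$ with $c$ as large as you like by enlarging the constant in $k$. Meanwhile $q_0M$, $\sqrt{q\,q_0}\,M$ and $kQ$ are all $\lesssim \frac{q}{\log q}(M+Q)$, where $Q:=(\EE\max_j\|\psi_j\|_V^q)^{1/q}$.

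Two further steps need repair.

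First, ``integrate the tail'' does not work directly. The exceptional probability $(C/q_0)^k$ does not decay with the level, and you have no control of $\|S\|_V$ on that event. What you need, for every $q$ and not only bounded $q$, is the Hoffmann-J\o rgensen moment inequality for symmetric sums combined with L\'evy's inequality. This gives $(\EE\|S\|_V^q)^{1/q}\le C(t_0+Q)$ whenever $\PP(\|S\|_V>t_0)\le c\,4^{-q}$. The tail bound then only has to be used at a single level $t_0\lesssim\frac{q}{\log q}(M+Q)$.

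Second, the isoperimetric step cannot use $A=\{\|S\|_V\le 2M\}$. A bound $\|\sum_i Y^l_i\|_V\le 2M$ on the controlling points says nothing about the partial sums $\sum_{i\in J_l}Y^l_i$ from which a controlled point is assembled. Instead, apply the isoperimetric inequality on the product space of the $\psi_j$ to the set $A=\{y\colon \EE_{\va}\|\sum_i\va_i y_i\|_V\le 4M,\ \sup_{f\in V^*,\|f\|\le 1}(\sum_i f(y_i)^2)^{1/2}\le 4\sqrt2\,M\}$. This set has probability at least $1/2$ by Markov and Khintchine. Then use the contraction principle $\EE_{\va}\|\sum_{i\in J}\va_i y_i\|_V\le \EE_{\va}\|\sum_i\va_i y_i\|_V$, note that the weak variances add over the $q_0$ pieces (this is where $q_0M^2$ comes from), and only then apply Rademacher concentration conditionally.

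Carrying out the whole argument conditionally on the $x_j$, as your first paragraph suggests, would leave you with $(\EE(\EE_{\va}\|\sum_j\va_j x_j\|_V)^q)^{1/q}$ in place of $\EE\|S\|_V$. That quantity is not controlled by the right-hand side of the inequality.
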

\begin{proof}[Proof of Lemma \ref{lem-9-1}]
For brevity, given a real-valued random variable $X$ and $q \ge 1$, we denote
\[
\|X\|_{L_q} := (\EE |X|^{q})^{1/q}.
\]
Let $V$ denote the Banach space of all bounded functions $\vi\colon F\to \RR$ with norm $\|\vi\|_{V}:=\sup\limits_{f\in F} |\vi(f)|$. Then we may write
  \[\Er_p(F, \pmb{\xi})=\sup_{f\in F}  \Bl| \f 1m \sum_{j=1}^m  |f(\xi^j)|^p\, -\,\|f\|_{L_p(\mu)}^p \Br| =\Bl\|\sum_{j=1}^m \vi_{j}\Br\|_V,\]
where $\vi_j\colon F\to \RR$ is defined by
 \[ \vi_j (f): =\f 1m \Bl( |f(\xi^j)|^p-\EE |f(\xi^j)|^p\Br), \quad  f\in F.\]
Clearly, $\vi_1,\dots, \vi_m$ are independent $V$-valued random variables with mean zero  such that
\[\Bl\|\max_{1\le j \le m}\|\vi_j\|_V\Br\|_{L_q}=\frac{1}{m}\bigg\|\max_{1\le j \le m}\sup_{f\in F}\Bl| |f(\xi^j)|^p - \EE|f(\xi^j)|^p\Br| \bigg\|_{L_q} \le \frac{2H_0}{m}.\]
Thus, using Lemma \ref{thm-9-3}, we obtain
\begin{align*}
\|\Er_p(F, \pmb{\xi})\|_{L_q}&=\bigg\|\Bl\|\sum_{j=1}^m \vi_{j} \Br\|_V\bigg\|_{L_q} \le \frac{Cq}{1+\log q} \bigg(\EE \,  \Bl\|\sum_{j=1}^m \vi_j\Br\|_V +  \Bl\| \max_{1\le j\le m} \|\vi_j\|_V\Br\|_{L_q}  \bigg)\\
&\le \frac{Cq}{1+\log q} \bigg(\EE \Er_p(F, \pmb{\xi}) +  \frac{H_0}{m}\bigg)\le \frac{Cq}{1+\log q}\max\Bl\{  \EE \Er_p(F, \pmb{\xi}),\ \frac{H_0}{m}\Br\},
\end{align*}
which proves the claim.
\end{proof}

We are now in a position to prove the claim of Theorem~\ref{Th-main} for all $q > 1$. First, note  that
\begin{equation*}
\max_{1 \le j \le m} \sup_{f \in F} |f(\xi^j)|^p
\le m^{1- {\f {p_0}{p}}} \sup_{f \in F}\|f\|_{L_\infty(\pmb\xi)}^{p-p_0}\cdot \sup_{g \in F}\|g\|_{L_{p_1}( \pmb\xi)}^{p_0}.
\end{equation*}
Thus, \eqref{1-11a-0} implies \eqref{9-1a}
with
\[
H_0 =  H^{p_0/p}m^{1-\frac{p_0}{p}},
\]
and hence, by Lemma~\ref{lem-9-1}, we obtain
\begin{equation}\label{9-2b}
\left( \EE |\Er_p(F, \pmb{\xi})|^q \right)^{1/q}
\le \frac{Cq}{1+\log q}
\max\Bl\{ \EE \Er_p(F, \pmb{\xi}),\ \Bigl(\frac{H}{m}\Bigr)^{p_0/p} \Br\}.
\end{equation}
Next, by H\"older's inequality,
\[
\mathbb{E}\,
\Bigl(\sup_{f\in F}\|f\|_{L_\infty(\pmb\xi)}^{p-p_0}
\sup_{g\in F}\|g\|_{L_{p_1}(\pmb{\xi})}^{p_0}\Bigr)
\le H^{p_0/p},
\]
so we may apply the results from Step~2 (Subsection~\ref{step3}).
For all $m$ satisfying \eqref{1-11b},  we then obtain
\begin{equation} \label{9-2a}
\EE \Er_p(F, \pmb{\xi}) \le C_9(p) \Bl(\f {Hm_0} m\Br)^{\f1p} \Bl(\log\log \frac{m}{m_0} + \log \f m{Hm_0}\Br)^{1-\f1{p_0}}\Bigl(1+\sup\limits_{f \in F} \|f\|_{L_p(\mu)}^p\Bigr).
\end{equation}
On the other hand, \eqref{1-11b} clearly implies that
\[\Bl(\frac{H}{m}\Br)^{p_0/p}\le \Bl(\frac{Hm_0}{m}\Br)^{1/p}\le \text{RHS of \eqref{9-2a}}.
\]
This combined with \eqref{9-2b} and  \eqref{9-2a} then yields
\begin{equation*}
\left( \EE|\Er_p(F, \pmb{\xi})|^q \right)^{1/q}\le \frac{C_{10}(p)q}{1+\log q}\Bl(\f {Hm_0} m\Br)^{\f1p} \Bl(\log\log \frac{m}{m_0} + \log \f m{Hm_0}\Br)^{1-\f1{p_0}}\Bigl(1+\sup\limits_{f \in F} \|f\|_{L_p(\mu)}^p\Bigr),
\end{equation*}
which proves Theorem \ref{Th-main} for any $q>1$.

\subsection{Step 4: Deriving the probability estimate (\ref{eq-probab-est-main})}

We will apply the following simple lemma.

\begin{lemma} \label{lem-9-5}
Let $Z$ be a real valued random variable satisfying
\begin{equation*}
(\EE|Z|^q)^{1/q} \le \frac{q}{1+\log q}, \quad \forall\, q \ge 1.
\end{equation*}
Then we have
\begin{equation}\label{9-5-1b}
\PP(|Z| \ge t) \le \exp\biggl(-\frac{t\log t}{e}\biggr),\quad \forall\, t \ge e.
\end{equation}
\end{lemma}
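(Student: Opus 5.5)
The plan is the standard Markov (Chernoff) argument applied to the moment bound, with $q$ chosen depending on $t$. Let $t\ge e$ and set $q:=t/e\ge 1$. By Markov's inequality applied to $|Z|^q$ and the hypothesis,
\[
\PP(|Z|\ge t)\le \frac{\EE|Z|^q}{t^q}\le \Bigl(\frac{q}{(1+\log q)\,t}\Bigr)^{q}
=\Bigl(\frac{1}{e\,(1+\log q)}\Bigr)^{q}.
\]
Since $1+\log q=1+\log t-1=\log t$, this equals $(e\log t)^{-t/e}=\exp\bigl(-\tfrac{t}{e}(1+\log\log t)\bigr)$.

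It then remains to compare this with $\exp(-t\log t/e)$. That would require $1+\log\log t\ge \log t$, which fails for large $t$. So the choice $q=t/e$ is too crude, and I will optimize $q$ more carefully instead.

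Minimize $q\log\bigl((1+\log q)t/q\bigr)$ over $q\ge 1$. Taking $q=t\log t/e$ gives $1+\log q=\log t+\log\log t$, so
\[
\frac{q}{(1+\log q)t}=\frac{\log t}{e(\log t+\log\log t)}\le \frac1e
\]
whenever $\log\log t\ge 0$, i.e.\ $t\ge e$. Hence $\PP(|Z|\ge t)\le e^{-q}=\exp(-t\log t/e)$. The condition $q\ge1$ holds because $t\log t\ge e$ for $t\ge e$.

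So the proof consists of Markov's inequality with $q=t\log t/e$ followed by this elementary inequality. There is no real obstacle; the only point requiring care is this choice of $q$, and the calculation above verifies it.
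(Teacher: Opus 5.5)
Your proof is correct and is essentially the paper's argument: Markov's inequality applied to $|Z|^q$ with $q$ tuned to $t$. The paper defines $q$ implicitly by $t=\frac{eq}{1+\log q}$, which gives exactly $e^{-q}$, and then shows $q\ge t\log t/e$; you instead take $q=t\log t/e$ directly and check that $\frac{q}{(1+\log q)t}\le \frac1e$, which avoids inverting $q\mapsto \frac{eq}{1+\log q}$.
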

\begin{proof} Given any $t\ge e$, choose $q=q_t\ge1$ such that $t=\frac{eq}{1+\log q}$. By Chebyshev's inequality,  we obtain
\begin{equation}\label{9-5-2}
\PP(|Z|\ge t) \le \frac{\EE|Z|^q}{t^q}\le \Bl(\frac{q}{t(1+\log q)}\Br)^q =  e^{-q}.
\end{equation}
Since
\[
\log t = 1 + \log q - \log(1+\log q) \le \frac{e q}{t},
\]
it follows  that
\[
q \ge \frac{t \log t}{e}.
\]
Substituting into \eqref{9-5-2},  we prove   the desired estimate \eqref{9-5-1b}.
\end{proof}

Now, if~\eqref{1-11a-0} holds for every $q>1$, then, by the results of Subsection~\ref{step4}, we have
\[
\left( \EE |\Er_p(F, \pmb{\xi})|^q \right)^{1/q}
\le \frac{C(p) q}{1+\log q}\,\Theta,
\quad \forall\, q \ge 1,
\]
where
\[
\Theta :=
\Bl({\f{Hm_0}{m}}\Br)^{\f1p}
\Bl(\log\log \frac{m}{m_0} + \log \f{m}{Hm_0}\Br)^{1-\f1{p_0}}
\Bigl(1+\sup\limits_{f \in F} \|f\|_{L_p(\mu)}^p\Bigr).
\]
Applying Lemma~\ref{lem-9-5} to the random variable
\[
Z = \f{\Er_p(F,\pmb\xi)}{C(p)\Theta},
\]
we obtain the estimate
\[
\mathbb{P}\bigl[\Er_p(F,\pmb{\xi}) > C(p)\Theta\, t\bigr]
\le \exp\Bigl(-\frac{t \log t}{e}\Bigr),
\quad \forall\, t \ge e.
\]

\section{Proofs of Theorems~\ref{Th-Conv} and~\ref{Th-Nik}}\label{sec5}

Let $(\Omega,\mathcal{F}, \mu)$ be a probability space, and   $\pmb{\xi}:=(\xi^1,\ldots,\xi^m)\in\Og^m$ {be} a {collection} of $m$ independent  random elements   satisfying condition~\eqref{probab-cond}. Let $p \ge 2$,   $p_0\in(1, p]$, and $p_1:=\f {pp_0}{p-p_0}$.
We begin with the following intermediate lemma, which allows us to pass from Theorem~\ref{Th-main} to the statement of Theorem~\ref{Th-Conv}, and which will also be useful in the next section for obtaining results on universal discretization.

\begin{lemma}\label{lem-log}
Let $F$ be a set of bounded functions on $\Og$ with  $0\in F$ and
\[
\sup_{f\in F}\|f\|_{L_p(\mu)}\le1.
\]
Assume that for some constants $R, H\ge 16$, $\alpha\in [0, p]$, $q \ge 1$, and an integer $m_0\in  [1, m]$,  the set $F$ satisfies
\begin{equation}\label{q-cond-0}
\Bigl(\mathbb{E}\,
\Bigl[\Bigl(
\sup_{f\in F}\|f\|_{L_\infty(\pmb\xi)}^{p-p_0}
\cdot \sup_{g\in F}\|g\|_{L_{p_1}(\pmb{\xi})}^{p_0}
\Bigr)^q\Bigr]
\Bigr)^{1/q}
\le  R\cdot \Bl(H (\log \tfrac{m}{m_0})^\alpha\Br)^{\f {p_0}p},
\end{equation}
and
\begin{equation}\label{q-cond}
\mathbb{E}\,
\Bigl[\sup_{f\in F}\|f\|_{L_\infty( \pmb\xi)}^{p-p_0}\cdot\bigl|\gamma_{p,p_0}\bigl(F,\|\cdot\|
_{L_{p_1}( \pmb{\xi})}\bigr)\bigr|^{p_0 }\Bigr]
\le R\cdot \Bl(H m_0 (\log \tfrac{m}{m_0})^\alpha\Br)^{\f {p_0}p}.
\end{equation}
Then there exists a constant $C := C(p)>0$  such that, whenever
\begin{equation}\label{5-11a}
m \ge  (8ep)^{2p}\, Hm_0\,  (\log H)^{\alpha}\, (\log\log H)^{p/p_0'},
\end{equation}
one has
\begin{equation}\label{6-4b}
\Bigl(\mathbb{E}\,|\Er_p(F,\pmb{\xi})|^q\Bigr)^{1/q}
\le
\frac{C R q}{1+\log q}
\Bigl(\frac{Hm_0 (\log \tfrac{m}{m_0})^{\alpha}}{m}\Bigr)^{1/p}
\Bigl(\log\log H+\log \frac{m}{Hm_0}\Bigr)^{1-\frac{1}{p_0}}.
\end{equation}
Furthermore, if {\eqref{q-cond-0} holds}  for all $q\ge 1$, and if for some constants $\lambda\ge e$ and $\varepsilon \in (0,\tfrac12]$,
\begin{equation}\label{6-9a}
m \ge (\lambda\,\varepsilon^{-1})^p(\log \varepsilon^{-1})^{\alpha+\frac{p}{p_0'}}
Hm_0\, (\log H)^{\alpha}\, (\log\log H)^{p/p_0'},
\end{equation}
then
\begin{equation}\label{eq-probab-est}
\mathbb{P}\bigl[\Er_p(F,\pmb{\xi})>\varepsilon\bigr]
\le 2  \exp\biggl(-\f { c \lambda}{ {R\log R}\cdot (\log \lambda)^{\frac{\alpha}{p}  -\frac{1}{p_0} }}\biggr)
\end{equation}
{for some constant  $c:=c(p)>0$.}
\end{lemma}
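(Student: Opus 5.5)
The plan is to apply Theorem~\ref{Th-main} directly, with the effective constant
\[
\widetilde H := R^{p/p_0}\, H\,\bigl(\log \tfrac{m}{m_0}\bigr)^{\alpha}
\]
in place of $H$. With this choice, hypotheses \eqref{q-cond-0} and \eqref{q-cond} become exactly \eqref{1-11a-0} and \eqref{1-11a}, with $(\widetilde H m_0)^{p_0/p}$ on the right-hand side, since $R\le R^{p/p_0}$ raised to the power $p_0/p$ gives $R$.

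It is cleaner, however, to keep $R$ outside. Rescale $F\mapsto R^{-1/p_0}F$, or alternatively note that the proof of Theorem~\ref{Th-main} is linear in these constants, so that $R$ enters $\Theta$ at most linearly. Since $\sup_{f\in F}\|f\|_{L_p(\mu)}\le 1$, the factor $1+\sup\|f\|^p_{L_p(\mu)}$ is at most $2$. The conclusion of Theorem~\ref{Th-main} then has the form
\[
\Bigl(\tfrac{\widetilde H m_0}{m}\Bigr)^{1/p}\Bigl(\log\log\tfrac{m}{m_0}+\log\tfrac{m}{\widetilde H m_0}\Bigr)^{1-1/p_0}.
\]

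The first real step is to check condition \eqref{1-11b} from \eqref{5-11a}. That is, we need
\[
m \ge e\bigl(e\log\log \tfrac{m}{m_0}\bigr)^{p/p_0'}\, H m_0\,\bigl(\log\tfrac{m}{m_0}\bigr)^{\alpha}.
\]
This is an implicit inequality in $m$. Put $x=m/(Hm_0)$, so that $\log\frac m{m_0}=\log H+\log x$. The claim becomes $x\ge e(e\log(\log H+\log x))^{p/p_0'}(\log H+\log x)^{\alpha}$. Since the right side grows only polylogarithmically in $x$, the set of admissible $x$ is an up-ray. It therefore suffices to verify the inequality at $x_0=(8ep)^{2p}(\log H)^\alpha(\log\log H)^{p/p_0'}$. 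At $x_0$ we have $\log x_0\le C(p)(1+\log\log H)\le C'(p)\log H$, because $H\ge16$, and the constant $(8ep)^{2p}$ absorbs everything. I expect this to be the fiddliest computation, but it is elementary.

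Next, simplify the bound. The terms $\log\log\frac m{m_0}$ and $\log\log H+\log\frac{m}{Hm_0}$ are comparable, since
\[
\log\log\tfrac{m}{m_0}\le \log\bigl(\log H+\log\tfrac m{Hm_0}\bigr)\le \log\log H+\log\bigl(1+\log\tfrac{m}{Hm_0}\bigr).
\]
Also, $\log\frac{m}{\widetilde H m_0}\le\log\frac m{Hm_0}$. Together these give \eqref{6-4b}.

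For the probability estimate, I would apply Chebyshev's inequality with $q$-th moments rather than \eqref{eq-probab-est-main}. This keeps track of the $(\log\frac m{m_0})^{\alpha/p}$ factor, which depends on $m$. Under \eqref{6-9a}, set $x=m/(Hm_0)$. Then
\[
(\log\tfrac m{m_0})^{\alpha}\le C(\log H)^\alpha(\log(\lambda/\va))^{\alpha}.
\]
Moreover,
\[
\Bigl(\tfrac{Hm_0(\log \frac m{m_0})^\alpha}{m}\Bigr)^{1/p}\bigl(\log\log H+\log x\bigr)^{1/p_0'}
\]
is decreasing in $x$ on the relevant range, because it behaves like $x^{-1/p}$ times powers of logs. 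Plugging in the threshold therefore gives a bound of order
\[
\frac{\va}{\lambda}\,(\log\lambda)^{\alpha/p+1/p_0'}\cdot\frac{(\log(\lambda/\va))^{\alpha/p+1/p_0'}}{(\log\va^{-1})^{\alpha/p+1/p_0'}}\lesssim \frac{\va}{\lambda}(\log\lambda)^{\alpha/p+1/p_0'}.
\]
Now choose $q$ of order $\lambda/\bigl(R\,(\log\lambda)^{\alpha/p-1/p_0}\bigr)$. Then
\[
\tfrac{CRq}{1+\log q}\cdot\tfrac{\va}{\lambda}(\log\lambda)^{\alpha/p+1/p_0'}\le \va/e,
\]
where the $\log q\approx\log\lambda-\log R$ factor accounts for the $R\log R$ in \eqref{eq-probab-est}. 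Chebyshev then gives $\PP[\Er_p>\va]\le e^{-q}$. If the chosen $q<1$, the bound $2\exp(\cdots)$ is trivial.

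The main obstacle is this final bookkeeping: matching the exponent $\frac{\alpha}{p}-\frac1{p_0}$ exactly and handling the case $\log q\ll\log\lambda$, which I would treat by splitting on whether $R\le\sqrt\lambda$.
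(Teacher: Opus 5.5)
Your outline follows the paper's proof. You rescale to absorb $R$ and apply Theorem~\ref{Th-main} with $\widetilde H=H(\log\frac m{m_0})^\alpha$. You check \eqref{1-11b} by monotonicity of an auxiliary function evaluated at the threshold, compare $\log\log\frac m{m_0}$ with $\log\log H+\log\frac m{Hm_0}$, and get the tail from high moments. The paper instead applies \eqref{eq-probab-est-main} with $t_0=\va/(C_1R\Theta)$, which is just Chebyshev packaged through Lemma~\ref{lem-9-5}. The $m$-dependence of $\widetilde H$ causes no difficulty there, because $m$ is fixed. Your direct choice of $q$, with the split on $R$ versus $\sqrt\lambda$, is therefore an equivalent parametrization rather than something the argument requires.

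Four details need correcting.

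\begin{enumerate}
\item The rescaling must be $F\mapsto R^{-1/p}F$, not $R^{-1/p_0}F$. Both hypotheses are $p$-homogeneous in $F$, since the factor $\sup_{f}\|f\|_{L_\infty(\pmb\xi)}^{p-p_0}$ scales as well. So $R^{-1/p}$ divides them by exactly $R$ and divides $\Er_p$ by $R$, which gives the linear factor $R$ in \eqref{6-4b}. With $R^{-1/p_0}$ you end up with $R^{p/p_0}$, which is worse when $p_0<p$.

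\item Your first option, taking $\widetilde H=R^{p/p_0}H(\log\frac m{m_0})^\alpha$ without rescaling, is not merely less clean; it fails. Condition \eqref{1-11b} would then require $m\gtrsim R^{p/p_0}Hm_0\cdots$, and \eqref{5-11a} does not provide this.

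\item In the tail part, \eqref{6-9a} alone does not imply \eqref{5-11a}. For $\lambda=e$ and $\va=\frac12$, the prefactor $(\lambda\va^{-1})^p(\log\va^{-1})^{\alpha+p/p_0'}$ is far below $(8ep)^{2p}$. Before invoking \eqref{6-4b}, and before using monotonicity beyond the threshold, you must first reduce to $\lambda\ge C_*(p)R(\log R)^{\beta}$, where $\beta=1+\frac\alpha p-\frac1{p_0}$. This is what the paper does. Smaller $\lambda$ are handled by taking $c$ small, so that the claimed bound exceeds $1$.

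\item Your displayed intermediate estimate carries a spurious factor $(\log\lambda)^{\alpha/p+1/p_0'}$. At the threshold the computation gives
\[
\Theta\lesssim\frac{\va}{\lambda}\Bigl(\frac{\log(\lambda/\va)}{\log\va^{-1}}\Bigr)^{\alpha/p+1/p_0'}\lesssim\frac{\va}{\lambda}(\log\lambda)^{\alpha/p+1/p_0'}.
\]
This is exactly the paper's \eqref{6-8}.
\end{enumerate}
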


\begin{proof}
We  will apply
Theorem~\ref{Th-main} to the scaled  function class
\[
\wt {F}:=R^{-1/p} F=\{ R^{-1/p} f\colon f\in F\}.
\]
Clearly, $\wt F$ satisfies conditions \eqref{1-11a-0} and \eqref{1-11a} with  constant
$
\wt H := H(\log \tfrac{m}{m_0})^{\alpha}$. We also need to   verify condition \eqref{1-11b} with constant $\wt H$;  namely,
\begin{equation}\label{6-6b}
m \ge \max\Bigl\{16m_0,\  e\bigl(e\log\log \tfrac{m}{m_0}\bigr)^{p/p_0'}  \wt Hm_0\Bigr\}.\end{equation}
To see this, consider the function
\[
\psi(x) := \f{x}{(\log x)^{\alpha}\,e(e \cdot \log\log x)^{p/p_0'}},\   \ x> e.
\]
Since for $x\ge e^{2p}$,
$$\frac{d}{dx} \Bl[\log \psi(x) \Br]= \frac{1}{x \log x} \left( \log x - \alpha - \frac{p/p_0'}{\log \log x} \right)\ge  \frac{1}{x \log x} \left( \log x - 2p\right)\ge 0,  $$
$\psi$
is an increasing function on  $[e^{2p},\infty)$. Thus,   \eqref{5-11a} implies that  $m\ge e^{2p} m_0$ and
\beq\label{6-8a}
\psi\bigl(\tfrac{m}{m_0})
= \f{\tfrac{m}{m_0}}{(\log \tfrac{m}{m_0})^{\alpha}e(e \cdot \log\log \tfrac{m}{m_0})^{p/p_0'}}
\ge \psi(x_*),
\enq
where
\[
x_* := (8 e p)^{2p} H (\log H)^{\alpha} (\log\log H)^{p/p_0'}.
\]
A straightforward calculation shows that $\psi(x_*)\ge H$ {and \eqref{6-6b} then follows from \eqref{6-8a}}.

Now applying Theorem~\ref{Th-main} to $\wt F$, and  recalling that
\[
{\sup_{f\in F}\|f\|_{L_p(\mu)}\le1,}
\]
 we obtain
\[R^{-1} \cdot \left( \EE |\Er_p(F, \pmb{\xi})|^q \right)^{1/q}
=\left( \EE |\Er_p(\wt F, \pmb{\xi})|^q \right)^{1/q}
\le \frac{C(p) q}{1+\log q}\, \wt \Theta,
\]
where
\[
 \wt \Theta :=
\Bigl(\frac{\wt Hm_0 }{m}\Bigr)^{1/p}
\Bl(\log\log \frac{m}{m_0} + \log \f{m}{\wt Hm_0}\Br)^{1-\frac{1}{p_0}}.
\]
Since $H\le \wt H$ and
\[ \log\log \f m{m_0} =\log\Bl[ \log \f {m}{Hm_0 } +\log H\Br] \le 2 \log \f {m}{Hm_0 }+2\log\log H,\]
it follows that
\[  \left( \EE |\Er_p(F, \pmb{\xi})|^q \right)^{1/q}
\le \frac{C(p) R q}{1+\log q}\, \wt \Theta\le \frac{3C(p) R q}{1+\log q}\, \Theta,
\]
where
\[
\Theta :=
\Bigl(\frac{ H m_0 (\log \f m{m_0})^\al  }{m}\Bigr)^{1/p}
\Bl(\log\log H + \log \f{m}{ Hm_0}\Br)^{1-\frac{1}{p_0}}.
\]
This proves \eqref{6-4b}.

\medskip

Finally, assuming that
{\eqref{q-cond-0} holds} for all $q\ge 1$, we  prove the probability estimate
\eqref{eq-probab-est} under  the sample size condition \eqref{6-9a}.
For simplicity, we set $\beta := 1 + \frac{\alpha}{p} - \frac{1}{p_0}$. Clearly, $0<\beta\le 1+\f 1{p'}$.
Without loss of generality, we may assume that $\lambda \ge C_* R(\log R)^\be$ for some sufficiently large constant  $C_*=C_\ast(p)$   that will be specified later.
Indeed, if  ${e\le} \lambda < C_* R(\log R)^\be$, then \eqref{eq-probab-est}    holds trivially for any constant   $0<c \le {\frac{\log 2}{C_*(3+\log C_*)} }$, using   monotonicity of the function $x (\log x)^{1-\be}$ on {$[e, \infty)$}.

By applying Theorem~\ref{Th-main} to the scaled  function class  $\wt F$, we obtain
\begin{equation}\label{6-0}
  \PP\Bl[\Er_p( F,\pmb\xi) > C_1(p) R\Theta \cdot t \Br]
  \le \exp\left[-\frac{t \log t}{e}\right],
  \qquad \forall\, t \ge e.
\end{equation}
We will invoke \eqref{6-0} with
 $t=t_0:=\f{\varepsilon}{C_1(p) R\Theta}$.
To this end, we claim that for some constant $C(p) > 0$,
\begin{equation}\label{6-8}t_0 \ge \f 1 {C(p)} \frac{\lambda}{R (\log \lambda)^\beta}.
\end{equation}
For the moment, we assume  the estimate \eqref{6-8} and proceed with the proof of
 the desired probability estimate
\eqref{eq-probab-est}.

By  monotonicity of the function $x(\log x)^{-\be}$ on the interval $[e^\be, \infty)$, and recalling $\ld\ge C_\ast R (\log R)^\be$, we obtain from \eqref{6-8} that
\[
t_0\ge \f 1 {C(p)}\f { C_\ast   (\log R)^\be} { ( \log C_\ast +\log R +\be \log\log R)^\be }\ge \f 1 {C(p)} \f {C_\ast} {  ({3+}\log C_\ast)^2}\ge e,
\]
provided that $C_\ast=C_\ast(p)$ is  large enough.
Thus, we may apply  \eqref{6-0} with $t=t_0$ to obtain
 \begin{equation}\label{6-0-c}
  \PP\Bl[\Er_p( F,\pmb\xi) > \va \Br]
  \le \exp\left[-\frac{t_0 \log t_0}{e}\right].
\end{equation}
Using \eqref{6-8} and monotonicity of the function $x\log x$   on $[e, \infty)$, we  have
\eq{
t_0\log t_0 \ge \frac{c(p) \lambda}{R (\log \lambda)^\beta}\Bl[ \log  \ld -\log R-\beta \log\log \ld-\log C(p)\Br].
}
Given that $\lambda \ge C_\ast R (\log R)^\beta$ and that the function $(1-\frac{1}{\log R}) y - \beta \log y$ is increasing on $[\log ({4}R), \infty)$, a direct calculation shows that
\[
\log\lambda - \log R - \beta \log\log \lambda - \log C(p) \ge \frac{\log \lambda}{\log R}
\]
for any sufficiently large constant $C_\ast = C_\ast(p)$. It follows that
\[ t_0\log t_0\ge \f {c(p) \ld} {R\log R (\log\ld)^{\be-1}}.\]
Substituting into \eqref{6-0-c}, we prove the  probability estimate
\eqref{eq-probab-est}.

It remains to prove \eqref{6-8}.  Let
\[
\varphi(x) := \frac{x^{1/p}}{(\log x+\log\log H)^{1/p_0'}(\log x + \log H)^{\alpha/p}},
\quad x > 1.
\]
A straightforward calculation shows that for $x \ge e^{2p}$,
\begin{align*}
 \f d{dx}(\log \vi(x))
&= \frac{1}{x}
\Bigl[
\frac{1}{p}
- \frac{1}{p_0'} \cdot \frac{1}{\log x+\log\log H}
- \frac{\alpha}{p} \cdot \frac{1}{\log x + \log H}
\Bigr] \\
&> \frac{1}{x}
\Bigl[
\frac{1}{p}
- \frac{2}{\log x}
\Bigr]
\ge 0,
\end{align*}
so $\varphi$ is increasing on $[e^{2p},\infty)$.
Since we may assume that $C_* \ge e^2$,  \eqref{6-9a} implies
\[
\frac{m}{Hm_0}
\ge (\lambda\varepsilon^{-1})^p(\log \varepsilon^{-1})^{\alpha+\frac{p}{p_0'}}
(\log H)^{\alpha}(\log\log H)^{p/p_0'}
> e^{2p}.
\]
It follows that
\begin{align*}
t_0&:=\f{\varepsilon}{C_1(p) R\Theta}
= \frac{\varepsilon}{C_1(p)R}\cdot
\varphi\Bl(\frac{m}{Hm_0}\Br) \\
&\ge
\frac{\varepsilon}{C_1(p) R}\cdot
\varphi\Bl(
(\lambda\varepsilon^{-1})^p(\log \varepsilon^{-1})^{\alpha+\frac{p}{p_0'}}
(\log H)^{\alpha}(\log\log H)^{p/p_0'}
\Br) \\
&\ge\f 1 {C_2(p) R}\cdot
\frac{\lambda\,
(\log \varepsilon^{-1})^{\frac{1}{p_0'}+\frac{\alpha}{p}}
(\log H)^{\alpha/p}
(\log\log H)^{1/p_0'}}
{
(\log \lambda + \log \varepsilon^{-1} + \log\log H)^{1/p_0'}
(\log H + \log \lambda + \log \varepsilon^{-1})^{\alpha/p}} \\
&\ge\f 1 {C(p)R}\cdot
\frac{\lambda}
{(\log \lambda)^{\frac{1}{p_0'}+\frac{\alpha}{p}}}=\f 1 {C(p)}\cdot
\frac{\lambda}
{R(\log \lambda)^\be}.
\end{align*}
This proves the claim \eqref{6-8}.
\end{proof}

For the convenience of later applications, we record the following consequence of Lemma \ref{lem-log}.
\begin{remark}\label{rem-probab-est}
Under the conditions of Lemma~\ref{lem-log}, assume further that for some $L\ge e$,
\[
Hm_0\,  (\log H)^{\alpha}\, (\log\log H)^{p/p_0'}
\le L\cdot H_0.
\]
If conditions~\eqref{q-cond} and \eqref{q-cond-0} hold for every $q > 1$, and if
for some $\lambda \ge e$ and $\varepsilon \in (0,\tfrac12]$,
\[
m \ge (\lambda\,\varepsilon^{-1})^p(\log \varepsilon^{-1})^{\alpha+\frac{p}{p_0'}} H_0,
\]
 then we have
\[
\mathbb{P}\bigl[\Er_p(F,\pmb{\xi}) > \varepsilon\bigr]
\le 2 \exp\biggl(-\frac{c(p) \lambda}{(R\log R)( L^{1/p} \log L)  \cdot(\log \lambda)^{\f \al p-\f 1{p_0}}}\biggr).
\]

To see this,  set $\be:=1+\frac{\alpha}{p}  - \frac{1}{p_0}$.
If  $\lambda\in [e, eL^{1/p}]$,  then
\[ \ld (\log \ld)^{1-\be}\le e L^{1/p} \Bl( 1+\f 1p \log L\Br)^{1-\be}\le  e p L^{1/p}(\log L)^{1-\be}\le e p L^{1/p}(\log L),\]
which implies
\[
\mathbb{P}\bigl[\Er_p(F,\pmb{\xi}) > \varepsilon\bigr]
\le 1\le
2 \exp\biggl(-\frac{c(p)  \lambda}{(R \log R) (L^{1/p}\log L)\cdot (\log \lambda)^{\be-1}}\biggr).
\]
If $\lambda\ge eL^{1/p}$, we use Lemma~\ref{lem-log} with $\ld L^{-1/p}$ in place of $\ld$ to obtain
\eq{
\mathbb{P}\bigl[\Er_p(F,\pmb{\xi}) > \varepsilon\bigr]
&\le 2 \exp\biggl(-\frac{{c_1(p)}\lambda L^{-1/p}}{R\log R\cdot (\log (\lambda L^{-1/p}))^{\be-1}}\biggr)\\
&\le
2 \exp\biggl(-\frac{{c_2(p)}  \lambda  }{(R\log R)(L^{1/p} \log L)\cdot (\log \lambda)^{\be-1}}\biggr),
}
where the last step treats the cases $\beta \ge 1$ and $0 < \beta < 1$ separately, and  the latter is further split into $\lambda \ge e L^{2/p}$ and $e L^{1/p} \le \lambda < e L^{2/p}$.
\end{remark}

\begin{proof}[Proof of Theorem~\ref{Th-Conv}]
Since $F\subset B(\Og, \RR)$ is $p$-convex, we may combine
 Lemmas~\ref{lem-2-5} and  \ref{cor-8-2} to obtain that  for any $q\ge 1$,
\[
\Bigl(\mathbb{E}
\bigl|\ga_{p,p}(F, \|\cdot\|_{L_\infty(\pmb\xi)})\bigr|^{pq}\Bigr)^{1/q}
\le C(p,\eta)\,(\log m)\,
\Bigl(\mathbb{E}\sup_{f\in F}\|f\|_{L_\infty(\pmb\xi)}^{pq}\Bigr)^{1/q}.
\]
Together with condition \eqref{3-7a}, this implies
\[ \Bigl(\mathbb{E}
\bigl|\ga_{p,p}(F, \|\cdot\|_{L_\infty(\pmb\xi)})\bigr|^{pq}\Bigr)^{1/q}\le  C(p,\eta)\,(\log m) H,\]
which ensures   that the conditions of Lemma~\ref{lem-log}
are satisfied with $p_0=p$ and  $\al=m_0=1$.  Furthermore,   the assumption on the sample  size estimate  implies
$ \log \f m H\ge c\log\log H$ for some constant $c=c(p,\eta)>0$.  Theorem~\ref{Th-Conv} then   follows directly from Lemma~\ref{lem-log}.
\end{proof}

\begin{proof}[Proof of Theorem~\ref{Th-Nik}]
The case where $X_N \subset B(\Omega; \mathbb{R})$ consists of real-valued functions follows immediately from Theorem~\ref{Th-Conv} applied to   the $p$-convex set
\[
F=X_N^p:=\bigl\{f\in X_N\colon \|f\|_{L_p(\mu)}\le 1\bigr\}.
\]
It remains to establish the complex case $X_N \subset B(\Omega)$. We  apply   Lemma~\ref{lem-log} to $F=X_N^p$ with parameters  $m_0=\al=1$ and  $p_0=p$.   Following the conditions of the lemma, it suffices to  verify  that for any $\pmb\xi=(\xi^1,\cdots, \xi^m)\in \Og^m$,
\beq\label{6-13}
\ga_{p,p}(X_N^p, \|\cdot\|_{L_\infty(\pmb\xi)}) \le  C(p)\,(\log m)^{1/p} H^{1/p}.
\enq

To prove \eqref{6-13},  consider the product  probability space
$(\widetilde{\Omega}, \wt {\mathcal{F}}, \wt \mu):=(\{0,1\},  \nu)\times (\Omega,\mathcal{F}, \mu)$, where $\nu$ is the uniform probability measure on  $\{0, 1\}$. Explicitly, for any $A\in\wt{\mathcal{F}}$,
\[
\widetilde{\mu}(A):=
\frac{1}{2}\Bigl[\mu\bigl(\big\{x\in\Og\colon (0,x)\in A\big\}\bigr) + \mu\bigl(\big\{x\in\Og\colon (1,x)\in A\big\}\bigr)\Bigr].
\]
Given $\pmb\xi=(\xi^1,\cdots, \xi^m)\in \Og^m$,  we define
\[
\widetilde{\pmb\xi}:=((0,\xi^1),\ldots,(0,\xi^m),(1,\xi^1),\ldots,(1,\xi^m))\in\widetilde\Omega^{2m}.
\]
For any   $f\in B(\Omega)$, we define its real-valued counterpart  $\wt f:\wt\Og\to \RR$ by
\[
\widetilde{f}(0,x):= {\rm Re}\, f(x)\quad \text{ and }\quad
\widetilde{f}(1,x):= {\rm Im}\, f(x),\  \  x\in\Og.
\]
Then  the following norm equivalence holds for each $f \in B(\Omega)$,
\beq\label{6-14a}  \|\wt f\|_{L_\infty(\wt {\pmb{\xi}})}\le
\|f\|_{L_\infty(\pmb\xi)}\le \sqrt{2}\,
\|\widetilde{f}\|_{L_\infty( \widetilde{\pmb{\xi}})},
\enq
where
 \[
\|\widetilde{f}\|_{L_\infty(\widetilde{\pmb{\xi}})}
:=
\max_{1\le j\le 2m} |\wt f(\wt {\xi}^j)|=\max_{1\le j\le m} \max\Bl\{ |{\rm Re}\, f(\xi^j)|, |{\rm Im}\, f(\xi^j)|\Br\} .
\]
Furthermore,  for $p\ge 2$,   we also have
\begin{align}\label{6-15}
\|\widetilde{f}\|_{L_p(\widetilde{\mu})}^p
&= \frac{1}{2}\Bigl(
\int_\Omega |{\rm Re}\, f|^p\, d\mu
+ \int_\Omega |{\rm Im}\, f|^p\, d\mu
\Bigr)
\le \f 12 \int_\Omega \bigl(|{\rm Re}\, f|^2+ |{\rm Im}\, f|^2\bigr)^{p/2}\, d\mu\\
&=\f 12 \|f\|_{L_p(\mu)}^p\le 2^{\f p2-1} \|\widetilde{f}\|_{L_p(\widetilde{\mu})}^p.
\notag
\end{align}

Finally, we define $Y_{N}:=\{\widetilde{f}\colon f\in X_N\}$, which is a real linear subspace of $B(\widetilde{\Omega}; \mathbb{R})$ with dimension at most $2N$.
{The estimate} \eqref{6-14a}  implies  that
\[
\ga_{p,p}(X_N^p, \|\cdot\|_{L_\infty(\pmb\xi)})
\le \sqrt{2}\ga_{p,p}\Bigl(\widetilde{X_N^p}, \|\cdot\|_{L_\infty( \widetilde{\pmb{\xi}})}\Bigr),\]
where  $\widetilde{X_N^p}:=\{\widetilde{f}\colon f\in X_N^p\}$.
However, by \eqref{6-15},
 \[
\widetilde{X_N^p} \subset  Y_N^p:=\bigl\{\widetilde{f}\in Y_N\colon \|\widetilde{f}\|_{L_p(\widetilde{\mu})}\le 1\bigr\}.
\]
It  follows that
\beq \label{6-16}
\ga_{p,p}(X_N^p, \|\cdot\|_{L_\infty(\pmb\xi)})
\le \sqrt{2}\ga_{p,p}\Bigl(Y_N^p, \|\cdot\|_{L_\infty( \widetilde{\pmb{\xi}})}\Bigr).
\enq
On the other hand, since $X_N \in \textnormal{NI}_{p,\infty}(H)$,  using   \eqref{6-15}, we obtain  that for any $f\in X_N$,
\[
\|\widetilde{f}\|_{L_\infty(\widetilde{\pmb{\xi}})}
\le \|f\|_{L_\infty(\pmb{\xi})}
\le H^{1/p}\|f\|_{L_p(\mu)}
\le \sqrt{2}H^{1/p}\|\widetilde{f}\|_{L_p(\widetilde{\mu})},\]
which implies  \[
\sup_{\wt f\in Y_N^p} \|\wt f\|_{L_\infty(\widetilde{\pmb{\xi}})}\le \sqrt{2}H^{1/p}.
\]
Proceeding as in the proof  of Theorem~\ref{Th-Conv},  using the $p$-convexity of $Y_N^p$,  and applying Lemmas~\ref{lem-2-5} and \ref{cor-8-2} to the finite subset $\{\widetilde{\xi}^1, \dots, \widetilde{\xi}^{2m}\}\subset \wt\Og$, we deduce
\begin{align*}
\ga_{p,p}(Y_N^p,
\|\cdot\|_{L_\infty(\widetilde{\pmb{\xi}})})&\le C_p \sup_{n\ge 0} 2^{\f np} e_n(Y_N^p, \|\cdot\|_{L_\infty(\widetilde{\pmb{\xi}})})\le C_p \Bl(\sup_{\wt f\in Y_N^p} \|\wt f \|_{L_\infty(\wt{\pmb{\xi}})}\Br) (\log m)^{1/p}\\
&\le  C_p\,(\log m)^{1/p}\, H^{1/p}.
\end{align*}
This combined with \eqref{6-16} confirms  the estimate  \eqref{6-13}. Thus,  applying Lemma \ref{lem-log} completes the proof of  Theorem~\ref{Th-Nik} in the complex case.
\end{proof}

\begin{remark}
Let $X_N \subset B(\Omega)$ be an $N$-dimensional linear space of bounded functions.
For $1 \le p < \infty$ and $x \in \Omega$, define the \emph{$L_p$ Christoffel function} associated with $X_N$ by
\[
\lambda_p(x)\equiv \lambda(X_N,p;x)
:=
\inf\Bigl\{\int_\Omega |f(z)|^p\,\mu(dz)\colon f\in X_N,\ |f(x)|=1\Bigr\}.
\]
Then $\lambda_p(x)>0$ for every $x\in\Omega$.
Define
\[
(Uf)(x):=\lambda_p(x)^{1/p}f(x), \quad f\in X_N,\ x\in\Omega,
\]
and set
\[
H:=\int_\Omega \lambda_p(x)^{-1}\,\mu(dx).
\]
Assume that $H<\infty$, and define a probability measure $\nu$ on $\Omega$ by
\[
\nu(dx)=\frac{1}{H\lambda_p(x)}\,\mu(dx).
\]
By the definition of $\lambda_p(x)$, for every $f\in X_N$ and every $x\in\Omega$ one has
\[
|(Uf)(x)|^p=\lambda_p(x)|f(x)|^p\le \int_\Omega |f(z)|^p\,\mu(dz)
= H\int_\Omega |Uf(z)|^p\,\nu(dz).
\]
Hence,
\[
\|Uf\|_\infty \le H^{1/p}\,\|Uf\|_{L_p(\nu)},
\quad \forall\, f\in X_N;
\]
namely,
\[
Y_N:=\{Uf\colon f\in X_N\}\in \textnormal{NI}_{p,\infty}(H).
\]
Applying Theorem~\ref{Th-Nik} to $Y_N$ and $\nu$, we conclude that for every $2<p<\infty$ there exists a constant $C(p)>0$ such that, whenever
\[
m \ge C(p)\,H\,\log H\,(\log\log H)^{p-1},
\]
there exist points $x_1,\dots,x_m\in\Omega$ satisfying
\[
\frac12\,\|f\|_{L_p(\mu)}^p
\le
\sum_{j=1}^m \frac{H\lambda_p(x_j)}{m}\,|f(x_j)|^p
\le
\frac32\,\|f\|_{L_p(\mu)}^p,
\quad \forall\, f\in X_N.
\]

We note that $L_p$ Christoffel functions have been extensively studied in the classical setting when $X_N$ is a space of algebraic or trigonometric polynomials. In many such cases, sharp pointwise estimates for $\lambda_p(x)$ imply that
$H = K N$ with a uniformly bounded constant $K$.
See, for instance, \cite{Ma-To}*{Section~4.3}.
\end{remark}

\section{Proof of Theorem \ref{Th-univ}}\label{universal}

In this section, we deduce Theorem \ref{Th-univ} from Lemma~\ref{lem-log}. Before delving into the technical details, we outline the primary strategy of the proof. We define
\[
F:=\Sigma_s^p(\mathcal{D}_N):=\bigl\{f\in \Sigma_s(\mathcal{D}_N)\colon \|f\|_{L_p(\mu)}\le1\bigr\}.
\]
Then, observing that $\Er_p(F,\pmb{\xi})= \Er_2(T_{p/2}(F),\pmb{\xi})$ for any $\pmb\xi=(\xi^1,\cdots, \xi^m)\in \Og^m$, we apply Lemma \ref{lem-log} with the parameter $p=2$ to the modified function class $T_{p/2}(F)$ rather than directly to $F$. The proof of Theorem \ref{Th-univ} proceeds in two stages.

First, in Subsection \ref{subsec-7-1}, we establish two technical lemmas. Lemma \ref{lem-6-2} will imply that for any  $1\le p_0<2$, we have
\eq{& \  \ga_{2, p_0} (T_{p/2}(F), \|\cdot\|_{L_{p_1}(\pmb\xi)})
 \le  C(p)  \Bl[ \ga_{p, \f {pp_0}2} (F, \|\cdot\|_{L_{q}(\pmb\xi)})\Br]^{\f {p}2 },
}
where $p_1:=\f {2p_0}{2-p_0}$ and $q:=\f {pp_0} {2-p_0}$. Lemma \ref{lem-univ-gamma-est}  then provides bounds for the  chaining functionals $  \ga_{p, \f {pp_0}2} (F, \|\cdot\|_{L_{q}(\pmb\xi)})$ for any $\pmb\xi=(\xi^1,\cdots, \xi^m)\in\Og^m$. This bound constitutes the core technical component of the proof of Theorem~\ref{Th-univ}.

Second, in Subsection \ref{subsection-1-2}, we
use the estimates from Lemmas \ref{lem-6-2} and \ref{lem-univ-gamma-est} to verify the conditions of Lemma \ref{lem-log} for the function class $T_{p/2}(F)$ and the parameter $p=2$. Applying  Lemma \ref{lem-log} then allows us  to estimate the probability $\PP[ \Er_2(T_{p/2}(F),\pmb{\xi})>\va]$ for  any $\va\in (0, \f12]$, which concludes the proof of Theorem \ref{Th-univ}.

Throughout the remainder of this section,  $(\Omega, \mathcal{F}, \mu)$ denotes our base probability space. Any supplementary probability measure $\nu$ on $\Omega$ is assumed to be defined on an extended $\sigma$-algebra $\mathcal{F}_1$ containing $\mathcal{F}$.
Recall that we define
$$\|f\|_\infty := \sup_{x\in\Omega} |f(x)| \quad \text{for all } f\in B(\Omega).$$
In our applications, $\nu$ will be a uniform probability measure supported on a finite  sequence of points in $\Og$.

\subsection{Technical lemmas}\label{subsec-7-1}

We start with the following simple observation for the $\gamma$-functionals.
\begin{lemma}\label{lem-6-2}
Assume that  $\lambda \in (0,1]$, $\alpha>0$, and $\beta\ge 1/\ld$.
Then there exists a constant $C(\al)>0$ such that for any $p_1 \in [\lambda^{-1},\infty]$, any $F \subset B(\Omega)$, and any probability measure $\nu$ on $\Og$ ,
\beq\label{7-1-0}
\gamma_{\alpha,\beta}\bigl(T_\lambda(F), \|\cdot\|_{L_{p_1}(\nu)}\bigr)
\le
C(\alpha)
\bigl[\gamma_{\lambda\alpha, \lambda\beta}\bigl(F,\|\cdot\|_{L_{\lambda p_1} (\nu)}\bigr)\bigr]^\lambda.
\enq
\end{lemma}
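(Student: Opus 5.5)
The plan is to transfer an admissible sequence of partitions from $F$ to $T_\lambda(F)$ and compare the resulting chaining sums term by term. The comparison rests on one pointwise inequality: for $0<\lambda\le 1$ and $a,b\in\CC$,
\[
\bigl||a|^\lambda-|b|^\lambda\bigr|\le |a-b|^\lambda .
\]
This follows from the subadditivity of $t\mapsto t^\lambda$ on $[0,\infty)$ together with $||a|-|b||\le |a-b|$.

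First, I would raise the pointwise inequality to the power $p_1$ and integrate against $\nu$. Because $\lambda p_1\ge 1$, this gives, for $f,g\in F$,
\[
\bigl\||f|^\lambda-|g|^\lambda\bigr\|_{L_{p_1}(\nu)}\le \Bigl(\int |f-g|^{\lambda p_1}\,d\nu\Bigr)^{1/p_1}=\|f-g\|_{L_{\lambda p_1}(\nu)}^{\lambda}.
\]
The case $p_1=\infty$ is the same statement with suprema in place of integrals. In words, the map $f\mapsto |f|^\lambda$ is $\lambda$-Hölder from $(F,\|\cdot\|_{L_{\lambda p_1}(\nu)})$ onto $(T_\lambda(F),\|\cdot\|_{L_{p_1}(\nu)})$.

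Second, I would push partitions forward. The clean way to do this is through the $\gamma^*$ formulation and \eqref{eq-gamma-equiv}, which avoids any trouble with the map $f\mapsto|f|^\lambda$ failing to be injective. Take sets $T_n\subset F$ with $|T_n|\le N_n$ that nearly attain $\gamma^*_{\lambda\alpha,\lambda\beta}(F,\|\cdot\|_{L_{\lambda p_1}(\nu)})$. Define $S_n:=\{|g|^\lambda\colon g\in T_n\}\subset T_\lambda(F)$, so that $|S_n|\le N_n$. Every $x\in T_\lambda(F)$ has the form $x=|f|^\lambda$ for some $f\in F$. By the Hölder bound,
\[
\varrho(x,S_n)\le \varrho_{\lambda p_1}(f,T_n)^\lambda ,
\]
where $\varrho$ is the $L_{p_1}(\nu)$ distance and $\varrho_{\lambda p_1}$ is the $L_{\lambda p_1}(\nu)$ distance. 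Multiplying by the chaining weight then gives
\[
\bigl[2^{n/\alpha}\varrho(x,S_n)\bigr]^\beta\le \bigl[2^{n/(\lambda\alpha)}\varrho_{\lambda p_1}(f,T_n)\bigr]^{\lambda\beta},
\]
since $2^{n\beta/\alpha}=(2^{n/(\lambda\alpha)})^{\lambda\beta}$.

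Third, I would sum over $n$, take the supremum over $x$, and raise to the power $1/\beta$. This yields $\gamma^*_{\alpha,\beta}(T_\lambda(F))\le [\gamma^*_{\lambda\alpha,\lambda\beta}(F)]^{\lambda}$. The condition $\beta\ge1/\lambda$ guarantees $\lambda\beta\ge 1$, so the right-hand functional is defined. Finally, \eqref{eq-gamma-equiv} gives $\gamma_{\alpha,\beta}\le C(\alpha)\gamma^*_{\alpha,\beta}$ on the left and $\gamma^*\le\gamma$ on the right, which is exactly \eqref{7-1-0}.

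There is no real obstacle. The only point needing care is to use the approximating-set formulation $\gamma^*$ rather than partitions directly. Working with $\gamma^*$ also explains why the constant depends on $\alpha$ alone.
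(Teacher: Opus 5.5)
Your proposal is correct and follows essentially the same route as the paper. The paper also uses the pointwise inequality $\bigl||a|^\lambda-|b|^\lambda\bigr|\le|a-b|^\lambda$ to get $\varrho_{L_{p_1}(\nu)}\bigl(|f|^\lambda,T_\lambda(A)\bigr)\le\varrho_{L_{\lambda p_1}(\nu)}(f,A)^\lambda$ for approximating sets $A\subset F$ with $|A|\le N_n$. It then compares the $\gamma^*$ sums term by term and concludes via the equivalence \eqref{eq-gamma-equiv}.
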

\begin{proof} To begin, we recall the elementary  inequality for any $0 < \lambda \le 1$ and $a, b \in \mathbb{C}$:$$||a|^\lambda - |b|^\lambda| \le |a - b|^\lambda.$$
Applying this inequality  pointwise to functions $f_1, f_2 \in B(\Omega)$ yields
\[
\Bl\||f_1|^\ld-|f_2|^\ld\Br\|_{L_{p_1}(\nu)}\le \|f_1-f_2\|_{L_{\lambda p_1}(\nu)}^{\lambda}.
\]
It follows that for every $f\in F$ and every subset $A\subset F$, {
\[ \varrho_{L_{p_1}(\nu)} \big(|f|^\ld,   T_\ld(A)\big)\le  \varrho_{L_{\ld p_1}(\nu)} \big(f,   A\big)^\ld,\]}
where
\[\varrho_{L_{r}(\nu)} (f,  A)=\inf_{g\in A}\|f-g\|_{L_{r}(\nu)},\   \ 1\le r\le \infty.\]
 Thus,  for any  sequence $\{A_n\}_{n=0}^\infty$ of  finite subsets  of $F$  with $|A_n| \le N_n$ for all $n\ge 0$, we have {
\[
\sup_{f\in F}\sum_{n=0}^\infty\Bigl[2^{n/\alpha} \varrho_{L_{p_1}(\nu)} \big(|f|^\ld,   T_\ld(A_n)\big)\Bigr]^\beta
\le \sup_{f\in F}\sum_{n=0}^\infty\Bigl[2^{n/(\lambda\alpha)} \varrho_{L_{\ld p_1}(\nu)} \big(f, A_n\big)\Bigr]^{\lambda\beta }.
\]}
Taking the infimum over all such sequences of subsets $\{A_n\}$, and applying the equivalence \eqref{eq-gamma-equiv} between the $\gamma$ and $\gamma^*$ functionals, we arrive at the desired estimate \eqref{7-1-0}.
\end{proof}

The main technical component of the proof of Theorem~\ref{Th-univ} is contained in the following lemma.

\begin{lemma}\label{lem-univ-gamma-est}\  Assume that   $s, N\in\mathbb{N}$ and  $4 \le s \le N$. Let   \[
\mathcal{D}_N := \{\varphi_1,\dots,\varphi_N\} \subset B(\Omega)
\] be a  uniformly bounded $4s$-sparse Riesz system in $L_2(\Og, \mu)$
with constant $K\ge 16$.
  Then,  given any $p \in [1,2]$,  there exist a constant $C:=C(p) > 0$ and a number
$p_0 := p_0(s,K) \in [\f 32, 2)$ such that for any probability measure $\nu$ on $\Og$, and  for $q:=\f {pp_0}{2-p_0}\ge 3$, we have
\beq\label{7-3a}
\sup_{f\in \Sigma_s^p(\mathcal{D}_N)}\|f\|_{\infty}^{p-\frac{pp_0}{2}}\ \cdot
\ \Bigl[\gamma_{p,\frac{pp_0}{2}}\bigl(\Sigma_s^p(\mathcal{D}_N),
\|\cdot\|_{L_{q}( \nu)}\bigr)\Bigr]^{\frac{pp_0}{2}}
\le
C \bigl(Ks\, \log s\, \log(Ks)\, \log N \bigr)^{\f {p_0}2}
\enq
and
\beq\label{7-3b}
\sup_{f\in \Sigma_s^p(\mathcal{D}_N)}\|f\|_{\infty}^{p-\frac{pp_0}{2}}
\cdot \sup_{g\in \Sigma_s^p(\mathcal{D}_N)}\|g\|_{L_{q} (\nu)}^{\frac{pp_0}{2}}
\le C(Ks)^{\f {p_0}2},
\enq
where
\[
\Sigma_s^p(\mathcal{D}_N)
:=
\bigl\{f\in \Sigma_s(\mathcal{D}_N)\colon \|f\|_{L_p(\mu)}\le 1\bigr\}.
\]
\end{lemma}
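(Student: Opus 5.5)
Both estimates start from the basic sup-norm bound for $F:=\Sigma_s^p(\mathcal D_N)$. If $f=\sum_{j\in J}a_j\varphi_j$ with $|J|\le s$, then $\|f\|_\infty\le\|{\bf a}\|_1\le \sqrt s\|{\bf a}\|_2\le\sqrt{Ks}\|f\|_{L_2(\mu)}$ by \eqref{riesz}. We must pass from $L_2$ to $L_p$ with $p\le2$. Interpolation gives $\|f\|_2^2\le\|f\|_\infty^{2-p}\|f\|_p^p$. Combined with the previous bound this yields $\|f\|_\infty^p\le (Ks)^{p/2}\|f\|_p^{p}$, i.e. $\sup_{f\in F}\|f\|_\infty\le (Ks)^{1/2}$. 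Estimate \eqref{7-3b} then follows at once. Since $\nu$ is a probability measure, $\|g\|_{L_q(\nu)}\le\|g\|_\infty$, so the left side of \eqref{7-3b} is at most $(Ks)^{p/2}$. We also need $(Ks)^{p/2}\le C(Ks)^{p_0/2}$. This fails for $p<p_0$, so the normalization must be handled more carefully. The right approach is to keep the exponent structure: the quantity is $\sup\|f\|_\infty^{p(1-p_0/2)}\cdot\sup\|g\|_q^{pp_0/2}$. Bounding $\|g\|_q^q\le\|g\|_\infty^{q-p}\|g\|_p^p$ for $q\ge p$ gives the total power $(Ks)^{\frac12[p(1-p_0/2)+\frac{pp_0}{2}\cdot\frac{q-p}{q}]}$. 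With $q=pp_0/(2-p_0)$ one has $\frac{pp_0}{2}\frac{q-p}{q}=\frac{p}{2}(2p_0-2)$, so the exponent simplifies to $\frac{p}{2}\cdot\frac{p_0}{2}$. This is at most $\frac{p_0}{2}$, which proves \eqref{7-3b}.

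For \eqref{7-3a} I would estimate $\gamma_{p,pp_0/2}(F,\|\cdot\|_{L_q(\nu)})$ through entropy numbers via \eqref{1-2-2025}:
\[
\gamma^{pp_0/2}\lesssim\sum_n (2^{n/p}e_n)^{pp_0/2}.
\]
Two entropy regimes are needed, splitting at $2^n\approx s\log N$. For the small-scale regime ($2^n\gtrsim s\log N$), I would cover $F$ by the $\binom{N}{s}\le N^s$ subspaces, each $s$-dimensional. On each subspace Lemma~\ref{lem-1-2a} gives volumetric decay $e_n\lesssim\sqrt{Ks}\,2^{-(2^n-s\log N)/s}$, so this part of the sum is geometrically controlled and contributes $\lesssim(s\log N)^{p_0/2}(Ks)^{p_0 p/4}$. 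For the large-scale regime I would use a Maurey-type empirical method. Write $f$ via its coefficients with $\|{\bf a}\|_1\le\sqrt{Ks}\,\|f\|_2$ and approximate by averages of $k$ signed dictionary elements. The error in $L_q(\nu)$, estimated through $L_\infty$ interpolated with $L_2$ and type-2 type bounds, behaves like $\sqrt{Ks}\,\sqrt{\log(\cdot)/k}$ after the usual Rademacher type argument. This yields $e_n\lesssim\sqrt{Ks}\,(\log N\,\log(Ks)/2^n)^{1/2}$ roughly.

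The exponent $pp_0/2$ with $p_0<2$ is chosen so that the sum over the $\approx\log s$ dyadic levels in the Maurey regime does not just add up. Each level contributes $\sim(Ks\log N\log(Ks))^{p_0/2}2^{n(p_0/2)(1-p/2)\cdot\ldots}$; for $p<2$ these terms are non-flat, and they are combined with the sup factor $\|f\|_\infty^{p-pp_0/2}$. Choosing $p_0$ close to $2$, namely $2-p_0\approx1/\log(Ks)$, makes the level sum over $\log s$ levels produce only a factor $\log s$ raised to power $p_0/2$. This gives $(Ks\log s\log(Ks)\log N)^{p_0/2}$. The main obstacle is this Maurey step in $L_q(\nu)$ with large $q\approx\log(Ks)$. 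One must get the $\sqrt{q}$-type constant to be only $\sqrt{\log(Ks)}$ and not lose $\log N$ twice. I would handle it by applying the empirical method with Rademacher averages of $L_q$-valued variables, using type constant $\sqrt q$ of $L_q$, and fixing $p_0$ so that $q\asymp\log(Ks)$. Tracking the $\|f\|_\infty$ prefactor throughout via the bound $\|f\|_\infty\le(Ks)^{1/2}$ from the first step then balances the powers.
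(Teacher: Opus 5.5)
There is a genuine gap in the middle (Maurey) range when $1\le p<2$, and that range is where the content of the lemma lies.

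Your empirical-method bound $e_n\lesssim\sqrt{Ks}\,(q\log N/2^n)^{1/2}$ is a bound for $\Sigma_s^2(\mathcal D_N)$. It uses $\|\mathbf a\|_1\le\sqrt{Ks}\,\|f\|_{L_2(\mu)}$ with $\|f\|_{L_2(\mu)}\le 1$. On $\Sigma_s^p(\mathcal D_N)$ one only has $\|f\|_{L_2(\mu)}\le (Ks)^{1/p-1/2}$. More importantly, any bound of the form $e_n\lesssim R\,2^{-n/2}$ decays too slowly for $p<2$. The terms $(2^{n/p}e_n)^{pp_0/2}$ then grow like $2^{n(1-p/2)p_0/2}$, so summing up to $2^n\approx s\log N$ costs a factor $(s\log N)^{(1-p/2)p_0/2}$, a power of $s$. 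Taking $p_0$ close to $2$ does not remove this geometric growth. Nor can the prefactor compensate, since $\sup_F\|f\|_\infty^{p-pp_0/2}\le (Ks)^{1-p_0/2}$ is then $O(1)$.

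The missing ingredient is the transfer of entropy bounds from $p=2$ to $p<2$, Lemma~\ref{lem-ent-trans} (from \cite{DTM1}). Combine \eqref{7-16c}, i.e. $e_{n+1}(\Sigma_s^p,L_q(\nu))\le 2e_n(\Sigma_s^p,L_2(\mu))\,e_n(\Sigma_{2s}^2,L_q(\nu))$, with an interpolation bound for $e_n(\Sigma_s^p,L_2(\mu))$. This gives \eqref{7-14}: decay at rate $2^{-n/(2\theta)}$ with $\theta=\frac{1/2-1/q}{1/p-1/q}$ and $p/(2\theta)\ge 1$. Only then, using $2^n\ge\log N$ in the middle range, is each of the roughly $\log s$ middle levels bounded by the same quantity $(Ks/\delta)^{pp_0/(4\theta)}(\log N)^{p_0/2}$, which yields the single factor $\log s$. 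The choice $q=p/\delta$ with $\delta\asymp 1/\log(Ks)$ keeps $(Ks)^{p/(2\theta)-1}$ and $(Ks)^{1-p_0/2}$ bounded, while the Maurey constant contributes $\delta^{-1}\asymp\log(Ks)$. It does not flatten the sum. The transfer needs the Riesz condition on $2s$-sparse vectors, because differences of $s$-sparse functions are $2s$-sparse. Together with the realification step, this is why a $4s$-sparse system is assumed.

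Several computations are also wrong.

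\textbf{Sup-norm bound.} From $\|f\|_\infty^2\le Ks\|f\|_{L_2(\mu)}^2\le Ks\,\|f\|_\infty^{2-p}\|f\|_{L_p(\mu)}^p$ you get $\|f\|_\infty\le (Ks)^{1/p}$ on $\Sigma_s^p(\mathcal D_N)$, not $(Ks)^{1/2}$. The latter already fails for $p=1$ with the Fej\'er kernel.

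\textbf{Proof of \eqref{7-3b}.} You interpolate $\|g\|_{L_q(\nu)}$ against $\|g\|_{L_p(\nu)}$, but only $\|g\|_{L_p(\mu)}\le 1$ is known and $\nu$ is arbitrary (in the application it is the empirical measure). The correct argument is the crude one: the left side of \eqref{7-3b} is at most $\sup_F\|f\|_\infty^{p}\le Ks\le C(Ks)^{p_0/2}$, and the last inequality holds precisely because $2-p_0\lesssim 1/\log(Ks)$. This is why $p_0$ must depend on $Ks$. Your remark that $(Ks)^{p/2}\le C(Ks)^{p_0/2}$ fails for $p<p_0$ is backwards.

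\textbf{Volumetric threshold.} If you switch to the volumetric bound already at $2^n\approx s\log N$, the first term of that regime is of order $(s\log N)^{p_0/2}(Ks)^{p_0/2}$, again a loss of a power of $s$. You must keep the middle-range bound up to $2^n\approx Cs\log N$, so that $2^{-2^n/s}\le (eN)^{-c}$ overwhelms $2^{n/p}$; the paper starts the volumetric sum at $n_1+4$. After that the tail contributes only $O((Ks)^{p_0/2})$.
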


\begin{proof}
For clarity, we divide the proof of this lemma into several steps.\\

\noindent  \textbf{Step 1 (Reduction).} By adapting the argument from the proof of Theorem~\ref{Th-Nik}, we  show that it suffices to establish the lemma for real-valued functions and uniformly bounded $2s$-sparse (rather than $4s$-sparse) real Riesz systems.\\

To this end, we  consider  the product space $\widetilde\Omega:=\{0,1\}\times\Omega$ equipped  with the probability  measure
\[
\widetilde{\mu}(A):=\frac12\Bigl(\mu\bigl(\{x\in\Og\colon  (0,x)\in A\}\bigr)+\mu\bigl(\{x\in\Og\colon (1,x)\in A\}\bigr)\Bigr),\quad A\subset \wt \Og.
\]
For any $f\in B(\Omega)$,  define its real-valued counterpart $\widetilde f\in B(\widetilde\Omega; \mathbb{R})$ by
\[
\widetilde f(0,x):={\rm Re}\, f(x),
\quad
\widetilde f(1,x):={\rm Im}\, f(x),\  \ x\in\Og.
\]
We  write $f^{\sim}$ for $\wt f$ whenever it is more convenient. By definition,  for any $f\in B(\Og)$ and $a, b\in\RR$,
\beq\label{7-3} \bigl((a+ib) f\bigr)^{\sim } =a\wt f + b (i f)^{\sim}.\enq
Furthermore, since for any $r\ge 2$,
\[
|{\rm Re}\, f|^r+|{\rm Im}\, f|^r\le |f|^r \le  2^{\f r2-1} \Bl(|{\rm Re}\, f|^r+|{\rm Im}\, f|^r\Br),
\]
it follows that
\beq \label{7-4}
2^{1/r}\|\widetilde f\|_{L_r(\widetilde\mu)} \le \|f\|_{L_r(\mu)}
\le \sqrt{2} \,\|\widetilde f\|_{L_r(\widetilde\mu)},\  \ \forall\, f\in B(\Og).
\enq

Next, we define  the  system of real-valued functions
\[
\widetilde{\mathcal D}_{2N}:=\bigl\{\widetilde{\varphi_1},\ldots,\widetilde{\varphi_N},
(i\varphi_1)^\sim,\ldots,(i\varphi_N)^\sim\bigr\}
\subset B(\widetilde\Omega; \mathbb{R}).
\]
Note that  $\|\widetilde{\varphi_j}\|_\infty\le 1$ and $\|(i\varphi_j)^\sim\|_\infty\le 1$ for all $j$.
Furthermore,  given a  $4s$-sparse real vector   $(a_1, \dots, a_N, b_1, \dots, b_N ) \in \mathbb{R}^{2N}$,  the vector $(a_1+ib_1, \ldots, a_N+ib_N)\in \mathbb{C}^N$ is $4s$-sparse in $\mathbb{C}^N$, thus,    using \eqref{riesz},  \eqref{7-3} and \eqref{7-4},
we obtain
\begin{align*}
\sum_{j=1}^N(|a_j|^2+|b_j|^2)
&\le K\Bigl\|\sum_{j=1}^N(a_j+ib_j)\varphi_j\Bigr\|_{L_2(\mu)}^2\le  2K\Bigl\|\sum_{j=1}^N\big((a_j+ib_j)\varphi_j\big)^\sim\Bigr\|_{L_2(\wt\mu)}^2
\\
&=2K
\Bigl\|\sum_{j=1}^Na_j\widetilde{\varphi_j}
+ \sum_{j=1}^Nb_j(i\varphi_j)^\sim\Bigr\|_{L_2(\widetilde{\mu})}^2.
\end{align*}
This means  that   $\widetilde{\mathcal{D}}_{2N}$ is a uniformly bounded real system on $\wt \Og$  satisfying a one-sided $4s$-sparse Riesz inequality \eqref{riesz} with constant $2K$.

Third, for another probability measure $\nu$ on $\Og$, we also define
\[
\widetilde{\nu}(A):=\frac12\Bigl(\nu\bigl(\{x\in\Og\colon (0,x)\in A\}\bigr)+\nu\bigl(\{x\in\Og\colon  (1,x)\in A\}\bigr)\Bigr),\quad A\subset \wt \Og.
\]
Then $\wt\nu$ is a probability measure on $\wt\Og$ satisfying
\beq\label{7-6-0}
\|f\|_{L_q(\nu)}\le  \sqrt{2} \|\widetilde f\|_{L_q(\widetilde{\nu})},\  \ \forall\, f\in B(\Og).
\enq
This implies that for  $\al:=p$ and  $\be:=\f {pp_0}2$,
\beq\label{7-6}
\gamma_{\al,
\beta}\bigl({\Sigma_s^p(\mathcal D_N)},\|\cdot\|_{L_q(\nu)}\bigr)
\le \sqrt{2}\,
\gamma_{\al,\beta}\Bigl(\big(\Sigma_s^p(\mathcal D_N)\big)^\sim,\|\cdot\|_{L_q(\widetilde{\nu})}\Bigr),
\enq
where
\[
\bigl(\Sigma_s^p(\mathcal D_N)\bigr)^\sim
:=\Bigl\{\widetilde f\colon    f\in \Sigma_s^p(\mathcal D_N)\Bigr\}\subset B(\wt \Og; \RR).
\]
However,
if $f=\sum\limits_{j\in J}(a_j+ib_j)\varphi_j \in \Sigma_s (\mathcal D_N)$ with $|J|\le s$, then using  \eqref{7-3}, we have
\[
\widetilde f=\sum_{j\in J}a_j\widetilde{\varphi_j}+\sum_{j\in J}b_j(i\varphi_j)^\sim\in \Sigma_{2s} (\wt{\mathcal D}_{2N}),
\]
which  combined  with \eqref{7-4} implies that
\beq\label{7-8}
\bigl(\Sigma_s^p(\mathcal D_N)\bigr)^\sim
\subset \Sigma_{2s}^p(\widetilde{\mathcal D}_{2N}).
\enq
Thus, using \eqref{7-6}, we obtain
\beq \label{7-10a}
\gamma_{\alpha,\beta}\bigl(\Sigma_s^p(\mathcal D_N),\|\cdot\|_{L_q(\nu)}\bigr)
\le \sqrt {2}\,
\gamma_{\alpha,\beta}\bigl(\Sigma_{2s}^p(\widetilde{\mathcal D}_{2N}),\|\cdot\|_{L_q(\widetilde{\nu})}\bigr).
\enq
Using  \eqref{7-6-0} and \eqref{7-8}, we also have
\beq \label{7-10b}\sup_{f\in \Sigma_s^p(\mathcal{D}_N)}
\|f\|_{L_q(\nu)}
\le \sqrt {2}
\sup_{g\in \Sigma_{2s}^p(\widetilde{\mathcal{D}}_{2N})}
\|g\|_{L_q(\widetilde{\nu})}.
\enq

Now combining the above observations, using \eqref{7-10a} and \eqref{7-10b},  we conclude that  the complex case of Lemma \ref{lem-univ-gamma-est} follows by applying the real-valued result  to the uniformly  bounded $4s$-sparse  real Riesz system $\widetilde{\mathcal D}_{2N}$ and its corresponding class  $\Sigma_{2s}( \wt {\mathcal D}_{2N})$. This completes the reduction.

For the remainder of the proof, we  assume without loss of generality that   $\mathcal{D}_N$ is  a uniformly  bounded, real-valued $2s$-sparse (rather than $4s$-sparse)  Riesz system, and that
$\Sigma_s(\mathcal{D}_N)$ is restricted to real coefficients.\\

\noindent \textbf{Step 2.} {In this step we prove} that for any $1\le p\le 2$, there exists a  constant $C=C(p)>0$ such that for  any $3\le q <\infty$,
\beq \label{7-14}
e_n(\Sigma^p_{s}(\mathcal{D}_N), \|\cdot\|_{L_q(\nu)})
\le
C\Bl(\f{Ks\cdot q
\log N}{2^n} \Br)^{\f 1 {2\theta}}, \quad \forall\, n\in \NN_0,
\enq
where {$\theta:=\frac{\frac{1}{2}-\frac{1}{q}}{\frac{1}{p}-\frac{1}{q}}.$}\\

First, we prove that there exists a universal constant $C>0$ such that   for any $2<q<\infty$,
\beq \label{7-14b}
e_n(\Sigma^2_{2s}(\mathcal{D}_N), \|\cdot\|_{L_q(\nu)})
\le
C\Bl(\f{Ks\cdot q
\log N}{2^n} \Br)^{\f 1 {2}}, \quad \forall\, n\in \NN_0.
\enq
Note that this bound  is slightly stronger than \eqref{7-14} for $p=2$.

The proof of \eqref{7-14b}   relies on the following known result, which is a direct consequence of Theorems~7.4.3 and~8.6.6, and Remark~8.6.10 (see also Theorem~9.2.1) in \cite{TemBook}; see also \cite{Tem13}.

\begin{lemma}\label{lem-ent-est}
There exists a universal  constant $C > 0$ such that for any $2\le q <\infty$,  any  measure $\nu$  on $\Og$, and any system of  functions
$\mathcal{D}_N = \{\varphi_1,\ldots,\varphi_N\} \subset B(\Omega; \mathbb{R})$ satisfying
\[
\max_{1\le j\le N} \|\varphi_j\|_{L_q(\nu)} \le 1,
\]
we have
\[
e_n\bigl(\mathcal{A}_1(\mathcal{D}_N), \|\cdot\|_{L_q(\nu)}\bigr)
\le C \sqrt{ \f { q\log N}{2^n}},
\qquad \forall\, 0\le n \le \log_2 N,
\]
where $\mathcal{A}_1(\mathcal{D}_N)$ denotes the absolute convex hull of $\mathcal{D}_N$, defined as
\[
\mathcal{A}_1(\mathcal{D}_N) := \biggl\{
\sum_{j=1}^N a_j \varphi_j
\colon  a_j\in\RR,\    \sum_{j=1}^N |a_j| \le 1
\biggr\}.
\]
\end{lemma}

By \eqref{riesz}, we get  $\Sigma^2_{2s}(\mathcal{D}_N)\subset \sqrt{2Ks}\cdot \mathcal{A}_1(\mathcal{D}_N)$. Furthermore,
\[
\|\varphi_j\|_{L_q(\nu)}\le \sup_{x\in \Og}|\vi_j(x)|\le 1,\  \ \forall\, 1\le j\le N.
\]
Thus, using Lemma~\ref{lem-ent-est}, we obtain \eqref{7-14b}
 for $0\le n\le  k_0:= \lceil \log_2 N\rceil$. For $n> k_0$, \eqref{7-14b} can be deduced  by  applying  \eqref{1-1-0} and \eqref{7-14b} for the already proven case $n=k_0$:
\eq{
e_n\bigl(\Sigma^2_{2s}(\mathcal{D}_N), \|\cdot\|_{L_q(\nu)}\bigr)
&\le \f{3\cdot 2^{2^{k_0}/N}}{ 2^{2^n/N}}e_{k_0}\bigl(\Sigma^2_{2s}(\mathcal{D}_N), \|\cdot\|_{L_q(\nu)}\bigr)
\le C \sqrt{\f {q  Ks\log N} N} 2^{-2^n/N}\\
&\le
C \sqrt{\f {q Ks\log N} { 2^{n}}} \sup_{t\ge 1/2} 2^{-t} \sqrt{t} \le C \sqrt{\f {q Ks\log N} { 2^{n}}}.
}

Next,  combining  estimate \eqref{7-14b} with Lemma \ref{lem-ent-trans} below, and taking into account that \eqref{7-14b} holds for an arbitrary  probability measure $\nu$ on $\Og$, we  deduce estimate  \eqref{7-14}  for $1\le p<2$ and $3\le q<\infty$.

\begin{lemma}\label{lem-ent-trans}
Let $\mathcal{D}_N=\{\varphi_1, \ldots, \varphi_N\}\subset B(\Omega; \mathbb{R})$ be a system  of bounded, real-valued functions on $\Omega$.
Assume that  for some  $q\in[3, \infty)$ and integer $s\in [0, N]$,  there exists a constant $B>0$ such that
\beq \label{7-14c}
e_n\bigl(\Sigma^2_{2s}(\mathcal{D}_N), \|\cdot\|_{L_q(\mu)}\bigr)+   e_n\bigl(\Sigma^2_{2s}(\mathcal{D}_N), \|\cdot\|_{L_q(\nu)}\bigr)
\le B2^{-n/2}, \quad \forall\, n\ge 0.
\enq
Then for any  $p\in[1, 2)$, there exists  a constant $C(p)>0$ such that
\beq\label{7-15c}
e_n\bigl(\Sigma^p_s(\mathcal{D}_N), \|\cdot\|_{L_q(\nu)}\bigr)
\le C(p)(B 2^{-n/2})^{1/\theta},\  \ \forall\, n\in\NN, \  \ \text{where}\  \ \theta:=\tfrac{\frac{1}{2}-\frac{1}{q}}{\frac{1}{p}-\frac{1}{q}}.
\enq
\end{lemma}

Lemma \ref{lem-ent-trans} for the case $\mu=\nu$ follows directly from Lemma 3.1 in \cite{DTM1}. In the general case, we use the following inequality:
\begin{equation}\label{7-16c}
e_{n+1}\bigl(\Sigma_s^p(\mathcal{D}_N), L_q(\nu)\bigr) \le 2 e_n\bigl(\Sigma_s^p(\mathcal{D}_N), L_2(\mu)\bigr) \cdot e_n\bigl(\Sigma_{2s}^2(\mathcal{D}_N), L_q(\nu)\bigr).
\end{equation}
Using (3.3) of \cite{DTM1} and the estimate \eqref{7-14c} for $e_n(\Sigma_{2s}^2(\mathcal{D}_N), \|\cdot\|_{L_q(\mu)})$, we obtain
\[
e_n\bigl(\Sigma_s^p(\mathcal{D}_N), \|\cdot\|_{L_2(\mu)}\bigr) \le C_1(p) (B2^{-n/2})^{\frac{1-\ta}{\ta}},
\]
which,  combined with \eqref{7-16c} and \eqref{7-14c}, yields the desired estimate \eqref{7-15c}.\\

\noindent \textbf{Step 3.} {In this step we prove} that for any $1\le p\le 2$ and $n\ge \log_2 \bigl(s\log_2\tfrac{eN}{s}\bigr)$, we have
\begin{equation}\label{2-ent-est-2}
e_{n+1}\bigl(\Sigma^p_{s}(\mathcal{D}_N), \|\cdot\|_{\infty}\bigr)
\le 4(Ks)^{1/p} 2^{-2^n/s},
\end{equation}
where we recall that $\|f\|_\infty=\sup\limits_{x\in\Og}|f(x)|$ for each $f\in B(\Og)$.\\

For each  $f\in \Sigma_{s}(\mathcal{D}_N)$, we have
\[
\|f\|_\infty
\le \sqrt{Ks}\|f\|_{L_2(\mu)}
\le \sqrt{Ks}\|f\|_{L_p(\mu)}^{\f p2}\|f\|_\infty^{1-\f p2},
\]
implying
\begin{equation}\label{eq-rad-2}
 \|f\|_{\infty}\le K_0:=(Ks)^{1/p},\  \ \forall\, f\in \Sigma^p_{s}(\mathcal{D}_N).
\end{equation}

For each $J\subset\{1, \ldots, N\}$ with  $|J|=s$, define  $V_J:=  {\rm span}\{\varphi_j\colon j\in J\}$,
\[
V_{J,\mu}^p:=\{f\in V_J\colon \|f\|_{L_p(\mu)}\le 1\}\quad \text{and}\quad
V_{J}^\infty:=\{f\in V_J\colon \|f\|_{\infty}\le 1\}.
\]
Then  \eqref{eq-rad-2} implies that
$ V_{J,\mu}^p \subset K_0\cdot V_J^\infty$.
It follows   that  (see \cite{TemBook}*{Theorem~7.2.1 and Corollary~7.2.2}) for any $u>0$,
\[
\mathcal{N}_u(V_{J,\mu}^p, \|\cdot\|_\infty )\le \mathcal{N}_u\bigl(K_0\cdot  V_J^\infty,  \|\cdot\|_{\infty}\bigr)
\le  \Bigl(1+\tfrac{2K_0}{u}\Bigr)^s.
\]
Since
\[ \Sigma_s^p(\mathcal{D}_N)=\bigcup_{|J|=s} V_{J,\mu}^p,\]
we deduce that for any $u>0$,
\begin{equation*}
\mathcal{N}_u(\Sigma_s^p(\mathcal{D}_N), \|\cdot\|_\infty )\le \tfrac{N!}{s!(N-s)!}\Bigl(1+\tfrac{2K_0}{u}\Bigr)^s
\le \bigr(\tfrac{eN}{s}\bigl)^s\Bigl(1+\tfrac{2K_0}{u}\Bigr)^s.
\end{equation*}
 Setting $u=u_n:= 4K_02^{-2^n/s}$  and assuming that  $n\ge \log_2 (s\log_2\frac{eN}{s})\ge \log_2s$,
 we obtain
\[
\mathcal{N}_{u_n}(\Sigma_s^p(\mathcal{D}_N), \|\cdot\|_{\infty})
\le 2^{s\log_2(\frac{eN}{s})}\Bigl(1+\tfrac{2^{\frac{2^n}{s}}}{2}\Bigr)^s\le 2^{2^{n+1}},
\]
which implies \eqref{2-ent-est-2}. \\

\noindent \textbf{Step 4 (final step).} {In this step we prove}  estimates \eqref{7-3a} and \eqref{7-3b}.\\

For simplicity, set $F:=\Sigma^p_s(\mathcal{D}_N)$.
Let  $p_0 = \frac{2}{1+\delta}\in [3/2, 2)$ for some constant  $\delta\in (0, 1/3)$ to be specified later. Then  $q:= \frac{pp_0}{2-p_0}=\f p\da\ge 3$, and
\[
\theta:=\tfrac{\frac{1}{2}-\frac{1}{q}}{\frac{1}{p}-\frac{1}{q}} = \frac{\frac{p}{2}-\delta}{1-\delta}
\in \bigl[\tfrac{1}{4}, \tfrac{p}{2}\bigr].
\]
By \eqref{1-2-2025}, we have
\begin{align}\Bl[\gamma_{p,\f{p_0p} 2}(F,\|\cdot\|_{L_q(\nu)})\Br]^{\f {p_0p}2}&\le  C_1(p)\cdot S,\label{7-12}
\end{align}
where
$$S:= \sum_{n=0}^\infty \Bl(2^{n/p} e_n(F,\|\cdot\|_{L_q( \nu)})\Br)^{\f {pp_0}2}.$$
We  split  the sum $S$ into three parts:
\[ S=\sum_{n=0}^{n_0}+ \sum_{n=n_0+1}^{n_1+{3}}+\sum_{n=n_1+{4}}^{\infty}:= S_1+S_2+S_3,\]
where
\[
n_0:= \bigl\lceil\log_2(2\log_2(eN))\bigr\rceil+2\text{ and } n_1:= \bigl\lceil\log_2 (2s\log_2(eN))\bigr\rceil+2.
\]
For the first sum, we apply \eqref{eq-rad-2} to obtain
\eq{S_1&\le  \sum_{n=0}^{n_0} \Bl(2^{n/p} e_n(F,\|\cdot\|_{\infty})\Br)^{\f {pp_0}2}\le  (Ks)^{\f {p_0}2} \sum_{n=0}^{n_0} 2^{\f {n p_0} 2}\le  C_2(p) (Ks\ \log N)^{\f {p_0}2}.
}
For the second sum, we apply \eqref{7-14} to obtain
\eq{S_2&\le C_3(p) \big(Ks\da^{-1}\big)^{\f {pp_0}{4\ta}}\sum_{n=n_0+1}^{n_1+{3}} 2^{\f {np_0}2} \Bl( \f { \log N}{2^n} \Br) ^{\f {p}{2\ta} \cdot \f {p_0} 2}.
}
Since $2^n\ge \log N$ for $n> n_0$ and since $2\ta\le p$, it follows that
\eq{
S_2\le C_3(p) \big(Ks\da^{-1}\big)^{\f {pp_0}{4\ta}}\sum_{n=n_0+1}^{n_1+{3}} 2^{\f {np_0}2} \Bl( \f { \log N}{2^n} \Br) ^{ \f {p_0} 2}\le   C_4(p) \big(Ks\da^{-1} \big)^{\f {pp_0}{4\ta}} (\log N)^{\f {p_0}2}\log s.
}
For the sum $S_3$, we apply \eqref{2-ent-est-2} to obtain
\eq{S_3&\le \sum_{n=n_1+{4}}^\infty \Bl(2^{\f np} e_n(F,\|\cdot\|_\infty)\Br)^{\f {pp_0}2}\le C_5(p) (Ks)^{\f {p_0}2} \sum_{n=n_1+3}^\infty2^{-(\f{2^n}s-\f n p) \f {pp_0}2}.
}
For $n\ge n_1+3$, we have
\eq{
\Bl(\frac{2^{n}}{s}-\f np\Br)-\Bl(\frac{2^{n_1}}{s}-\f {n_1}p\Br)
&=\frac{2^{n_1}}{s} (2^{n-n_1} - 1) - \f {n-n_1}p\\
&\ge (2^{n-n_1} - 1) - \f {n-n_1} p \ge  n-n_1.
}
It follows that
\eq{
S_3 &\le C_5(p) (Ks)^{\f {p_0}2}  2^{-(\f{2^{n_1} }s-\f {n_1} p) \f {pp_0}2}\sum_{k=3}^\infty 2^{-k/2}
\le C_6(p) (Ks)^{\f {p_0}2},
}
where the last step uses the fact that $\f{2^{n_1} }s\ge n_1$.
Putting the above estimates together, and recalling $\f p {2\ta}\ge 1$, we obtain
\[
S \le C_7(p)  (Ks\da^{-1})^{\f {pp_0}{4\ta}} (\log N)^{p_0/2} \log s.
\]
Substituting into \eqref{7-12}, we then obtain
\[
\Bl[\gamma_{p,\f{p_0p} 2}(F,\|\cdot\|_{L_q(\nu)})\Br]^{\f {p_0p}2}
\le C_8(p)  (Ks\da^{-1})^{\f {pp_0}{4\ta}} (\log N)^{p_0/2} \log s.
\]
This combined with  \eqref{eq-rad-2} implies
\begin{align*}
&\sup_{f\in F}\|f\|_{\infty}^{p-\frac{pp_0}{2}}\ \cdot  \
\Bl[\gamma_{p,\frac{pp_0}{2}}(F,\|\cdot\|_{L_q(\nu)})\Br]^{\frac{pp_0}{2}}\le C_8(p) (Ks)^{\f p {2\ta}} \da^{-\f {pp_0}{4\ta}} (\log N)^{\f {p_0}2} \log s\\
&=
C_8(p) \Bigl((Ks)^{1+\delta\cdot\frac{2-p\delta}{p-2\delta}}(\log s)^{1+\delta}\cdot \delta^{-1 - \delta\cdot \frac{2-p}{p-2\delta}} \log N \Bigr)^{\f {p_0}2}
\\
&\le
C_9(p) \Bigl((Ks)^{1+6\delta}(\log s)^{1+\delta}\cdot \delta^{-1} \log N \Bigr)^{p_0/2}.
\end{align*}
Finally, setting  $\delta:= \frac{1}{3\log Ks}$, and recalling $p_0=\f 2 {1+\da}$,  we obtain
\eq{&\sup_{f\in F}\|f\|_{\infty}^{p-\frac{pp_0}{2}}\ \cdot  \
\Bl[\gamma_{p,\frac{pp_0}{2}}(F,\|\cdot\|_{L_q(\nu)})\Br]^{\frac{pp_0}{2}}\le
C_{10}(p) \Bigl(Ks \  \log s \  \log(Ks)\ \log N \Bigr)^{p_0/2},}
and \eq{
&\sup_{f\in F}\|f\|_{\infty}^{p-\frac{pp_0}{2}}
\cdot \sup_{g\in F}\|g\|_{L_q(\nu)}^{\frac{pp_0}{2}} \le \sup_{f\in F}\|f\|_{\infty}^{p}\le Ks\le C (Ks)^{p_0/2}.
}
This completes the proof of the lemma.
\end{proof}

\subsection{Proof of Theorem \ref{Th-univ}}\label{subsection-1-2}
For simplicity, we again set $F:=\Sigma_s^p(\mathcal{D}_N)$.
Fix the random points $\xi^1, \cdots, \xi^m$, and let $\nu$ be the uniform distribution on the finite set  $\{\xi^1,\cdots, \xi^m\}\subset \Og$. Applying Lemma \ref{lem-univ-gamma-est} with $\nu$, we  find a number $p_0=p_0(s, K) \in [\f 32, 2)$ such that the following two inequalities hold  for $q:=\f {pp_0} {2-p_0} \ge 3$:
\begin{equation}\label{7-18}
\sup_{f\in F} \|f\|_\infty^{p-\f {pp_0}2} \  \cdot\  \Bl[ \ga_{p, \f {pp_0}2}(F, \|\cdot\|_{L_q(\pmb\xi)})\Br]^{\f {pp_0}2} \le C(p) \bigl( Ks \ \log s \ \log (Ks) \  \log N\bigr)^{\f {p_0}2},
\end{equation}
\begin{equation}\label{7-19}
\sup_{f\in F} \|f\|_\infty^{p-\f {pp_0}2} \  \cdot \ \sup_{g\in F} \|g\|_{L_q(\pmb\xi)}^{\f {pp_0}2}\le C(p) (Ks)^{\f {p_0}2}.
\end{equation}

We will apply  Lemma \ref{lem-log}  to the function class $T_{p/2}(F)$ rather than to $F$. To this end, we first note  that
\[ \sup_{g\in T_{p/2}(F)}\|g\|_{L_2(\mu)}=\sup_{f\in F}\|f\|_{L_p(\mu)}^{\f p2}\le 1,\]
and
\[ \Er_p(F,\pmb{\xi})= \Er_2(T_{p/2}(F),\pmb{\xi}).\]
Second, invoking  Lemma \ref{lem-6-2} with $\ld=\f p2$,  we obtain  that for  $p_1:=\f {2p_0}{2-p_0}$ and $q:=\f {pp_0} {2-p_0}$,
\eq{&\sup_{g\in T_{p/2}(F)} \|g\|_\infty^{2-p_0}\cdot\Bl[ \ga_{2, p_0} (T_{p/2}(F), \|\cdot\|_{L_{p_1}(\pmb\xi)})\Br]^{p_0}
\le  C_1(p)   \sup_{f\in F} \|f\|_\infty^{p-\f {pp_0}2 }\cdot\Bl[ \ga_{p, \f {pp_0}2} (F, \|\cdot\|_{L_{q}(\pmb\xi)})\Br]^{\f {pp_0}2 },
}
which, using  \eqref{7-18}, is estimated above by
\[ \le C_2(p) \bigl( Ks \ \log s \ \log (Ks) \  \log N\bigr)^{\f {p_0}2}.\]
Finally,  using \eqref{7-19}, we obtain
\eq{\sup_{g\in T_{p/2}(F)} \|g\|_\infty^{2-p_0}\  \cdot \sup_{g\in T_{p/2}(F)}  \|g\|_{L_{p_1}(\pmb\xi)}^{p_0}&= \sup_{f\in F} \|f\|_\infty^{p-\f {pp_0}2} \  \cdot \ \sup_{g\in F} \|g\|_{L_q(\pmb\xi)}^{\f {pp_0}2}\le C(p) (Ks)^{\f {p_0}2}.
}

The above estimates show that  Lemma~\ref{lem-log} is applicable  to the function class $T_{p/2}(F)$ with the   parameters $p=2$, $H=Ks$, $\al=0$ and $m_0\sim \log s\, \log (Ks)\, \log N$.
Recalling   $1<p_0<2$,  we observe that
\[ H m_0 (\log\log H)^{2/p_0'} \le C Ks   \log s\, \log (Ks)\, \log N\,  \log\log (Ks). \]
 Taking Remark \ref{rem-probab-est} into account, we then obtain that for any  $\lambda \ge e$ and $\varepsilon \in (0,\tfrac12]$,
\eq{\mathbb{P}\bigl[\Er_p(F,\pmb{\xi})>\varepsilon\bigr]=
\mathbb{P}\Bl[\Er_2(T_{p/2}(F),\pmb{\xi})>\varepsilon\Br]
\le 2 \exp\bigl(-c(p)\lambda(\log\lambda)^{1/p_0}\bigr)\le 2 \exp\bigl(-c(p)\lambda\sqrt{\log\lambda}\bigr)
}
provided that
\[
m \ge (\lambda\,\varepsilon^{-1})^2(\log \varepsilon^{-1})\,
Ks\, \log N\, \log s \, \log(Ks)\, \log\log(Ks).
\]
This proves  Theorem \ref{Th-univ}.
\qed

\section{van Handel's approach: the contraction principle and \texorpdfstring{$K$}{K}-functionals}\label{sec6}

Talagrand's generic chaining provides sharp estimates for the expected supremum of random processes in terms of chaining functionals (see, for instance, Theorem~\ref{thm-1-3}). In concrete applications, however, these functionals are often difficult to estimate directly. Van Handel~\cites{VH,VH1} developed a powerful approach to bounding them by combining a contraction principle with interpolation techniques based on $K$-functionals. In this section, we recall the main ingredients of this method and derive a finite-dimensional version of the contraction principle that is particularly convenient for our later applications.

We begin with van Handel's contraction principle, which bounds chaining functionals in terms of suitable local entropy estimates. Throughout this section, unless stated otherwise, $\mathbf{T}=(\bT,\varrho)$ denotes a metric space.

\begin{lemma}[{\cite{VH}*{Theorem 3.1}}]\label{thm-1-2-1b}
Assume that there exist a constant $a\ge 0$ and a sequence of nonnegative functions
\[s_n:\bT\to [0,\infty),\quad n\in \NN_0,\]
such that for every set $A\subset \bT$ and every $n\in \NN_0$,
\begin{equation}\label{1-2-1}
e_n (A,\varrho)\le a \cdot \diam(A,\varrho) +\sup_{x\in A} s_n(x).
\end{equation}
Then, for any $\al>0$ and $1\le \be<\infty$,
\begin{align*}
&\gamma_{\alpha, \be}(\bT,\varrho) \le C(\al)\left[  a\cdot \gamma_{\alpha, \be}(\bT,\varrho)+\sup _{x \in \bT}\Bl(  \sum_{n=0}^\infty\left(2^{n / \alpha} s_n(x)\right)^\be\Br)^{1 / \be}\right].
\end{align*} In particular, if $0<a\le \f 1{2C(\al)}$, then
\[
\gamma_{\alpha, \be}(\bT,\varrho) \le 2C(\al)\ \sup _{x \in \bT}\Bl(  \sum_{n=0}^\infty\left(2^{n / \alpha} s_n(x)\right)^\be\Br)^{1 / \be}.\]
\end{lemma}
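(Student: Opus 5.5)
The natural route is to build an admissible sequence of partitions greedily from the entropy covers supplied by \eqref{1-2-1}, and then sum the resulting cell diameters. The one subtlety is that \eqref{1-2-1} controls the cover radius by $\sup_{x\in A}s_n(x)$ rather than by $s_n$ at the point being chained. Throughout I work with $\gamma_{\alpha,\beta}$ directly and use the equivalence \eqref{eq-gamma-equiv} only to absorb constants depending on $\alpha$.

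\emph{Setup.} Fix an admissible sequence $\{\mathcal B_n\}$ that is near optimal for $\gamma_{\alpha,\beta}(\bT,\varrho)$, and write $S(x):=\bigl(\sum_n(2^{n/\alpha}s_n(x))^\beta\bigr)^{1/\beta}$. We may assume $\sup_x S(x)<\infty$. The partitions $\mathcal A_n$ are built inductively, using the slack $N_{n-1}^2\le N_n$, i.e.\ $N_{n-1}^3\le N_{n+1}$.

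\emph{Inductive step.} Given a cell $A\in\mathcal A_{n-1}$, do three things.
\begin{enumerate}[(i)]
\item Intersect $A$ with the cells of $\mathcal B_{n-1}$.
\item Split each piece according to the dyadic level of $2^{n/\alpha}s_{n-1}(x)$ relative to $\diam(A,\varrho)$. Levels below $2^{-n}\diam(A,\varrho)$ are lumped into a single class, so only $O(n)\le N_{n-1}$ classes occur.
\item Cover each resulting piece $A'$ by $N_{n-1}$ balls of radius $2e_{n-1}(A',\varrho)$ and take the induced partition.
\end{enumerate}
The total count is at most $N_{n-1}^3\le N_{n+1}$, so after an index shift the sequence is admissible. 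Because of the level splitting in (ii), on the new cell containing $x$ we get
\[
\sup_{A'}s_{n-1}\le 2s_{n-1}(x)+2^{-n/\alpha-n}\diam(\mathcal A_{n-1}(x),\varrho).
\]
Hence, by \eqref{1-2-1},
\[
\diam(\mathcal A_{n}(x))\le 4a\,\diam(\mathcal B_{n-1}(x))+8s_{n-1}(x)+2^{-n/\alpha-n+2}\diam(\mathcal A_{n-1}(x)).
\]

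\emph{Summation.} Multiply by $2^{n/\alpha}$, raise to the power $\beta$ and sum over $n$. The first term gives $C(\alpha)\,a\,\gamma_{\alpha,\beta}(\bT,\varrho)$, since $\{\mathcal B_n\}$ is near optimal. The second gives $C(\alpha)S(x)$. The third carries the geometric factor $2^{-n+2}$ and the shifted index costs $2^{1/\alpha}$, so it is absorbed into the left-hand side, at least after discarding finitely many initial levels, which are bounded by $\diam\le\gamma$. This yields the first inequality. The second follows at once by moving $C(\alpha)a\gamma_{\alpha,\beta}$ to the left when $a\le 1/(2C(\alpha))$, which is legitimate because $\gamma_{\alpha,\beta}(\bT,\varrho)<\infty$; if it were infinite, one first runs the argument on finite subsets.

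\emph{Main obstacle.} The hard step is (ii): making $\sup_{A}s_n$ comparable to $s_n(x)$ while keeping the number of pieces admissible. The plan caps the number of dyadic levels at $O(n)$ and charges the lumped low-level class to the geometrically decaying $\diam$ term. If this bookkeeping fails, the fallback is van Handel's original device: compare with a near-optimal chain for $\sup_x S(x)$ and select cover centers minimizing $s_n$.
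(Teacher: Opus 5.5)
The paper quotes this lemma from van Handel without proof, so I am judging your argument on its own. Your construction is sound. Refining by a near-optimal $\mathcal B_{n-1}$, splitting by dyadic levels of $s_{n-1}$ so that $\sup_{A'}s_{n-1}\le 2s_{n-1}(x)$ (at the price of only $O_\alpha(n)$ extra pieces, negligible against $N_{n-1}$), and then covering by $N_{n-1}$ balls does give your recursion. The $\mathcal B$-term does produce $C(\alpha)\,a\,\gamma_{\alpha,\beta}(\bT,\varrho)$.

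\textbf{The gap: the initial levels.} You bound the finitely many initial levels by $\diam(\bT,\varrho)\le\gamma_{\alpha,\beta}(\bT,\varrho)$. This adds $C(\alpha)\gamma_{\alpha,\beta}(\bT,\varrho)$, with $C(\alpha)\ge 1$, to the right-hand side. The resulting inequality $\gamma\le C(\alpha)[a\gamma+\gamma+\sup_x S(x)]$ is vacuous.

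You cannot avoid the term $\diam(\bT,\varrho)$. It enters in three places:
\begin{enumerate}[(i)]
\item $\mathcal A_0=\{\bT\}$ in every admissible sequence;
\item the trivial levels created by your index shift contribute $C(\alpha)\diam(\bT,\varrho)$;
\item the starting term $\diam(\mathcal A_0(x))=\diam(\bT,\varrho)$ survives when you unroll the recursion.
\end{enumerate}
It must be controlled by the hypothesis itself, applied at $n=0$ with $A=\bT$. Since $N_0=1$,
\[
\diam(\bT,\varrho)\le 2e_0(\bT,\varrho)\le 2a\,\diam(\bT,\varrho)+2\sup_{x\in\bT}s_0(x)\le 2a\,\gamma_{\alpha,\beta}(\bT,\varrho)+2\sup_{x\in\bT}S(x).
\]
This is of the required form and closes the argument.

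\textbf{Two smaller points.}

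First, the top class. To have only $O(n)$ classes you must also lump the large values of $s_{n-1}$ into a single top class. The threshold has to be $s_{n-1}(x)>\diam(A,\varrho)$, not $2^{n/\alpha}s_{n-1}(x)>\diam(A,\varrho)$; the latter only gives $2^{n/\alpha}\diam(\mathcal A_n(x))\le 2^{2n/\alpha}s_{n-1}(x)$. On the top class your bound on $\sup_{A'}s_{n-1}$ fails, but you do not need it, since $\diam(\mathcal A_n(x))\le\diam(A,\varrho)<s_{n-1}(x)$. With this threshold the number of classes is about $(1+1/\alpha)n$.

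Second, the ``in particular'' part. Passing to finite subsets there would require $\gamma_{\alpha,\beta}(\bT,\varrho)\lesssim\sup_F\gamma_{\alpha,\beta}(F,\varrho)$, which you have not justified. It is simpler, for $a$ below a constant $c(\alpha)$, to skip $\mathcal B$ and use $\diam(A')\le\diam(\mathcal A_{n-1}(x))$. The $a$-term then becomes one more contraction term in the recursion, and unrolling gives $\gamma_{\alpha,\beta}(\bT,\varrho)\le C(\alpha)\sup_x S(x)$ with no a priori finiteness assumption. In the paper's applications $\bT$ is a bounded subset of a finite-dimensional space, so $\gamma_{\alpha,\beta}(\bT,\varrho)<\infty$ anyway.
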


We will mostly work in a finite-dimensional setting, where $\bT$ is a subset of a finite-dimensional normed space $(X,\|\cdot\|)$ and $\varrho$ is the metric induced by $\|\cdot\|$. Combining Lemma~\ref{thm-1-2-1b} with Lemma~\ref{lem-1-2a}, we obtain the following finite-dimensional version of Lemma~\ref{thm-1-2-1b}, involving only finitely many functionals $s_n\colon \bT\to[0,\infty)$.

\begin{lemma}\label{cor-1-7} Let $\bT$ be a subset of an  $m$-dimensional real  normed space $X=(X,\|\cdot\|)$.   Assume that there exist a constant $a\ge 0$  and  a finite sequence of  nonnegative functions
\[
s_n\colon \bT\to [0,\infty),\  \ n=0,1,\dots, \lceil \log_2 m \rceil,
\]
such that for every subset $A\subset \bT$ and every integer $n\in [0, \log_2 m]$,
\beq\label{1-2-6}
e_{n}(A,\|\cdot\|) \le a \cdot\diam(A,\|\cdot\|)+\sup _{x \in A} s_n(x).
\enq
Then,  for any $\al>0$ and  $1\le \be<\infty$,
\[
\gamma_{\alpha, \be}(\bT,\|\cdot\|) \le C(\alpha)\left[ a\cdot  \gamma_{\alpha, \be}(\bT,\|\cdot\|)+\sup _{x \in \bT}\Bl( \sum_{n=0}^{\lceil \log_2 m \rceil}\big(2^{n / \alpha} s_n(x)\big)^\be\Br)^{1 / \be}\right].
\]
In particular, if $0<a\le \f 1 {2C(\al)}$, then
\[\ga_{\al,\be}(\bT,\|\cdot\|)\le 2C(\al)\ \sup _{x \in \bT} \Bl(\sum_{n=0}^{\lceil \log_2 m \rceil}\big(2^{n / \alpha} s_n(x)\big)^\be\Br)^{1 / \be}.\]
\end{lemma}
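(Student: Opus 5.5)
The plan is to define new functionals $\tilde s_n$ for \emph{all} $n\in\NN_0$, extending the given ones, so that \eqref{1-2-1} holds for every $n$ with a constant comparable to $a$. Lemma~\ref{thm-1-2-1b} then applies, and the tail $n>\lceil\log_2 m\rceil$ is absorbed into the term $a\,\gamma_{\alpha,\beta}$. The key input is Lemma~\ref{lem-1-2a}: in an $m$-dimensional space, $e_n$ decays super-exponentially once $2^n\gtrsim m$.

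\textbf{Step 1.} Set $n_*:=\lceil\log_2 m\rceil$ and $\tilde s_n:=s_n$ for $n\le n_*$. For $n\le \log_2 m$ the hypothesis \eqref{1-2-6} gives \eqref{1-2-1} directly. If $n_*>\log_2 m$, the single index $n=n_*$ is not covered. I would treat it like the tail below, comparing $e_{n_*}(A)$ with $e_{n_*-1}(A)$ via \eqref{1-1-0}; this costs only an absolute factor.

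\textbf{Step 2.} For $n>n_*$ set $\tilde s_n:=0$. Applying \eqref{1-1-0} with $k=n_*-1$ or $k=n_*$ (inside the range where \eqref{1-2-6} holds) gives
\[
e_n(A)\le 3\,2^{(2^k-2^n)/m}e_k(A)\le 3\,2^{(2^k-2^n)/m}\bigl(a\diam(A)+\sup_{x\in A}s_k(x)\bigr).
\]
The factor $2^{-(2^n-2^k)/m}$ is at most $2^{-2^{n-k-1}}$, which decays doubly exponentially in $n-k$. I also use the trivial bound $e_n(A)\le \diam(A)$. These fail to produce the clean form $e_n(A)\le a'\diam(A)$ with $\tilde s_n=0$, because the term $\sup_A s_k$ remains. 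To handle it, define instead, for $n>n_*$,
\[
\tilde s_n(x):=3\cdot 2^{-2^{n-n_*-1}}s_{n_*}(x).
\]
Then \eqref{1-2-1} holds for all $n$ with constant $3a$.

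\textbf{Step 3.} Apply Lemma~\ref{thm-1-2-1b} with $3a$ and the $\tilde s_n$. The tail contribution is
\[
\sum_{n>n_*}\bigl(2^{n/\alpha}\tilde s_n(x)\bigr)^\beta
=\bigl(2^{n_*/\alpha}s_{n_*}(x)\bigr)^\beta\sum_{j\ge1}\bigl(3\cdot 2^{j/\alpha}2^{-2^{j-1}}\bigr)^\beta,
\]
and the series is bounded by $C(\alpha)^\beta$ uniformly in $\beta\ge1$. Hence the full $\ell_\beta$ sum is at most $C(\alpha)$ times the finite sum over $n\le n_*$, which yields the stated inequality with a new $C(\alpha)$. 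The ``in particular'' part follows by absorbing the term $a\,\gamma_{\alpha,\beta}$ into the left-hand side, which is legitimate since $\gamma_{\alpha,\beta}(\bT)<\infty$ in finite dimensions for bounded $\bT$; one may assume $\bT$ is bounded, otherwise both sides are infinite.

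\textbf{Main obstacle.} The delicate point is the boundary index $n_*$ when $m$ is not a power of $2$. There the hypothesis covers $n\le\lfloor\log_2 m\rfloor$, and I instead anchor the tail at $k=\lfloor\log_2 m\rfloor$. Since $2^k\ge m/2$, the decay factor $2^{(2^k-2^n)/m}$ is still at most $2^{-c\,2^{n-k}}$ for $n>k$. The tail estimate is then unchanged apart from constants, and the single term $s_{n_*}$ in the final sum can be bounded by $C(\alpha)$ times the term $s_{\lfloor\log_2 m\rfloor}$.
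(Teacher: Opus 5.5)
Your proposal is correct and follows essentially the same route as the paper: extend the functionals beyond $n_*=\lceil\log_2 m\rceil$ by a doubly-exponentially decaying multiple of the last available $s_n$, apply Lemma~\ref{thm-1-2-1b}, and bound the tail by a constant times $2^{n_*/\alpha}s_{n_*}(x)$. The paper's extension is $12\cdot 2^{-2^n/m}s_{n_*}$, and the paper applies the hypothesis directly at $n_*$ even though it is literally stated only for $n\le\log_2 m$; your anchoring at $\lfloor\log_2 m\rfloor$ when $m$ is not a power of $2$ is a small refinement that covers this case.
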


\bp Let $n_*=\lceil \log_2 m \rceil$.  Using  \eqref{1-1-0}, we obtain that for any $A\subset \bT$, and any integer  $n>n_*$,
\begin{align*}
e_n(A, \|\cdot\|)&\le 3\cdot  2^{-2^n/m}  2^{2^{n_*}/m} e_{n_*}(A, \|\cdot\|)\le 12\cdot  2^{-2^n/m}  e_{n_*}(A, \|\cdot\|).
\end{align*}
Applying \eqref{1-2-6} with $n_*$ in place of $n$ then yields
\beq e_n(A, \|\cdot\|) \le 12\cdot a \cdot\diam(A,\|\cdot\|)+12 \cdot 2^{-2^n/m} \sup _{x \in A} s_{n_\ast} (x),\  \ \forall\, n>n_\ast.\label{3-3a} \enq
Next, define
\[s_n(x):=12\cdot  2^{-2^n/m} s_{n_*} (x),\   \  n=n_*+1, n_*+2,\ldots.\]
Combining \eqref{3-3a} with \eqref{1-2-6}, we get
\[ e_{n}(A,\|\cdot\|) \le 12a \cdot\diam(A,\|\cdot\|)+\sup _{x \in A} s_n(x),\  \ \forall\, n\in \NN_0.\]
It then follows by Lemma \ref{thm-1-2-1b}  that
\begin{align*}\ga_{\al,\be}(\bT, \|\cdot\|) &\le C(\al)\left[ 12a\cdot  \gamma_{\alpha, \be}(\bT,\|\cdot\|) +  \Bl(\sup _{x \in \bT} \sum_{n=0}^{n_*}\big(2^{n / \alpha} s_n(x)\big)^\be\Br)^{1 / \be}+  \right.\\
&\left.\   \hspace{2cm} +\sup _{x \in \bT} s_{n_*} (x)\cdot \Bl(\sum_{n=n_*+1}^\infty\big(2^{n / \alpha} \cdot 2^{-2^n/m} \big)^\be\Br)^{1 / \be}\right].
\end{align*}
However, it is readily seen (see, for instance,~\cite{Kos}*{Lemma 2.11}) that
\[
\Bigl(\sum_{n=n_*+1}^\infty\bigl(2^{n/\alpha}\cdot 2^{-2^n/m}\bigr)^\beta\Bigr)^{1/\beta}
\le C_1(\alpha)\,2^{n_*/\alpha}.
\]
Thus,
\[ \ga_{\al,\be}(\bT, \|\cdot\|)\le C_2(\al)\left[ a\cdot  \gamma_{\alpha, \be}(\bT,\|\cdot\|) + \Bl(\sup _{x \in \bT} \sum_{n=0}^{n_*}\big(2^{n / \alpha} s_n(x)\big)^\be\Br)^{1 / \be}\right],\]
which completes the proof.
\ep

In applications of Lemma~\ref{thm-1-2-1b}, the main difficulty is to construct, for a sufficiently small constant $a>0$, a sequence of functionals $s_n\colon\bT\to[0,\infty)$ such that the entropy estimate~\eqref{1-2-1} holds for every subset $A\subset\bT$, while the quantity
\[
\sup_{x\in\bT}\sum_{n\ge0}\bigl(2^{n/\alpha}s_n(x)\bigr)^\beta
\]
remains under control. Van Handel~\cite{VH} proposed an elegant way to do this by means of interpolation $K$-functionals. We briefly recall this approach below, in a slightly generalized form. Since only the case $\beta=1$ will be used later, we restrict ourselves to this case throughout the rest of the section.

\begin{definition}\label{def-3-2}
Fix $\alpha>0$. Let
\[
u_n\colon\bT\times[0,\infty)\to[0,\infty), \quad n\in\NN_0,
\]
be a sequence of nonnegative functionals such that for some constant $u^\ast>0$,
\[
0\le u_n(x,r)\le u_{n+1}(x,r)\le u^\ast,
\quad \forall\, x\in\bT,\ \forall\, r\ge0,\ \forall\, n\in\NN_0.
\]
For each $n\in\NN_0$, define the associated $K$-functional by
\[
K_n(x,t):=\inf_{r\ge0}\bigl[tr+u_n(x,r)\bigr],
\quad x\in\bT,\ t\ge0.
\]
Given a constant $a>0$, let $r_n^a\colon\bT\to[0,\infty)$, $n\in\NN_0$, be any sequence of functions satisfying
\[
t_n r_n^a(x)+u_n(x,r_n^a(x))
\le
K_n(x,t_n)+2^{-n}u^\ast,
\quad \forall\, x\in\bT,\ \forall\, n\in\NN_0,
\]
where
$
t_n:=a\,2^{n/\alpha}$.

\end{definition}

By definition, it is clear that
\beq\label{3-3b}
0\le  K_n(x,t)\le K_{n+1}(x, t)\le  u_{n+1}(x, 0)\le u^\ast,\   \ \forall\, x\in \bT,\  \ \forall\, n\in \mathbb{N}_0.
\enq
We will adhere to  the notation and assumptions introduced in Definition~\ref{def-3-2} throughout the remainder of this section.

The next two lemmas record the basic properties of the quantities introduced in Definition~\ref{def-3-2}. Since we will use them repeatedly, we include their  proofs for completeness.

\begin{lemma}\label{lem-1-7-1} Let $a>0$, $x\in \bT$ and $n_0\in \NN$.  Define
\beq \label{3-8-2}\Delta_{n,n_0}(x):=K_{n+ n_0}(x, t_{n+ n_0})-K_{n}(x, t_{n})+\f{ u^\ast}{2^{n+n_0}},\  \ n\in \NN_0,\enq
where $t_n=a 2^{n/\al}$.
 Then \beq \label{ineq:delta-lower} \ \Delta_{n,n_0}(x)\ge (1-2^{-\f1\al})a 2^{(n+n_0)/\al} r_{n+n_0}^a(x)\ge 0,\  \ \ \forall\, n\in\NN_0, \enq and
 \beq \label{ineq:delta-sum} \sum_{n=0}^\infty \Delta_{n, n_0}(x)\le  (n_0+1) u^\ast.\enq
 In particular,
\beq \label{3-14b} \sum_{n=1}^\infty   2^{n/\al} \cdot r_n^a(x) \le\f{ C(\al) u^\ast}a.\enq
\end{lemma}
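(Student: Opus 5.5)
Throughout, write $K_n:=K_n(x,t_n)$ for brevity. The argument is a telescoping computation built directly on Definition~\ref{def-3-2}, and it has three steps.

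\textbf{Step 1: the lower bound \eqref{ineq:delta-lower}.} Put $k:=n+n_0$. By the choice of $r_k^a(x)$,
\[
K_k \ge t_k r_k^a(x)+u_k\bigl(x,r_k^a(x)\bigr)-2^{-k}u^\ast .
\]
To bound $K_n$ from above, test its infimum at $r=r_k^a(x)$ and use that $u_n\le u_k$ pointwise, since $u_n$ is nondecreasing in $n$:
\[
K_n \le t_n r_k^a(x)+u_n\bigl(x,r_k^a(x)\bigr)\le t_n r_k^a(x)+u_k\bigl(x,r_k^a(x)\bigr).
\]
Subtracting the second display from the first gives
\[
K_k-K_n+2^{-k}u^\ast\ge (t_k-t_n)\,r_k^a(x).
\]
It remains to estimate the coefficient. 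Since $n_0\ge1$,
\[
t_k-t_n=a2^{k/\alpha}\bigl(1-2^{-n_0/\alpha}\bigr)\ge (1-2^{-1/\alpha})\,a2^{k/\alpha}.
\]
This is \eqref{ineq:delta-lower}; nonnegativity follows because $r_k^a(x)\ge0$.

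\textbf{Step 2: the sum bound \eqref{ineq:delta-sum}.} Sum $\Delta_{n,n_0}(x)$ over $0\le n\le M$ and telescope the $K$-terms:
\[
\sum_{n=0}^M\bigl(K_{n+n_0}-K_n\bigr)=\sum_{j=M+1}^{M+n_0}K_j-\sum_{j=0}^{n_0-1}K_j .
\]
By \eqref{3-3b} every $K_j$ lies in $[0,u^\ast]$. Hence the first sum is at most $n_0u^\ast$, and the second sum can be dropped since it is nonnegative. The error terms contribute
\[
\sum_{n\ge0}2^{-(n+n_0)}u^\ast\le 2^{1-n_0}u^\ast\le u^\ast .
\]
Letting $M\to\infty$, which is legitimate because all terms are nonnegative by Step 1, gives $\sum_n\Delta_{n,n_0}(x)\le (n_0+1)u^\ast$.

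\textbf{Step 3: the consequence \eqref{3-14b}.} Take $n_0=1$ and combine Steps 1 and 2:
\[
\sum_{n\ge1}2^{n/\alpha}r_n^a(x)=\sum_{n\ge0}2^{(n+1)/\alpha}r_{n+1}^a(x)\le \frac{2u^\ast}{(1-2^{-1/\alpha})\,a}.
\]
So \eqref{3-14b} holds with $C(\alpha)=2/(1-2^{-1/\alpha})$.

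The only point needing care is the comparison in Step 1. Plugging the near-minimizer $r_k^a(x)$ for level $k$ into the level-$n$ infimum is what keeps the $u$-terms on the correct side, via the monotonicity $u_n\le u_k$. Everything else is bookkeeping with the uniform bound $u^\ast$.
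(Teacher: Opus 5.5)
Your proposal is correct and follows the paper's proof essentially step for step. The lower bound comes from testing the level-$n$ infimum at $r_{n+n_0}^a(x)$ and using $u_n\le u_{n+n_0}$. The sum bound is the same telescoping argument with $0\le K_j\le u^\ast$ from \eqref{3-3b}, and \eqref{3-14b} follows by taking $n_0=1$; you simply spell out the telescoped remainder and the explicit constant $C(\alpha)=2/(1-2^{-1/\alpha})$.
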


\bp
For brevity, write  $r_k^a:=r_k^a(x)$ for $k\in\NN_0$. By definition,  for each integer $n\ge 0$, we have
\begin{align*}
\Delta_{n, n_0}(x)&\ge t_{n+n_0} r_{n+n_0}^a + u_{n+n_0}(x, r_{n+n_0}^a) -\inf_{r\ge 0} \Bl[ t_{n} r+ u_{n}(x, r)\Br]\notag\\
&\ge (t_{n+n_0}-t_{n}) r_{n+n_0}^a + u_{n+n_0}(x, r_{n+n_0}^a) -u_{n}(x, r_{n+n_0}^a)\\
&\ge (t_{n+n_0}-t_{n}) r_{n+n_0}^a
\ge (1-2^{-\f1\al}) t_{n+n_0} r_{n+n_0}^a,
\end{align*}
which proves \eqref{ineq:delta-lower}.
To establish \eqref{ineq:delta-sum}, we use \eqref{3-3b} to obtain
\eq{\sum_{n=0}^\infty \Delta_{n, n_0}(x)\le u^\ast + \sup_{n\in\NN_0}\sum_{j=1}^{n_0} K_{n+j} (x, t_{n+j})\le (n_0+1) u^\ast.
}
Finally, applying \eqref{ineq:delta-lower} and \eqref{ineq:delta-sum} with $n_0=1$, we obtain
\begin{align*}
a\sum_{n=1}^\infty 2^{n/\al} r_n^a(x)&=\sum_{n=0}^\infty t_{n+1} r_{n+1}^a(x)\le C(\al) \sum_{n=0}^\infty \Delta_{n,1}(x)\le C(\al) u^\ast,
\end{align*}
which proves \eqref{3-14b}.
\ep

\begin{lemma}\label{lem-1-7-2}
Under the notation of Lemma~\ref{lem-1-7-1}, for every $r\ge0$,  one has
\beq \label{ineq:u-diff} u_{n+ n_0}(x, r_{n+ n_0}^a(x)) -u_{n}(x, r)\le \Delta_{n,n_0}(x) + a 2^{n/\al}  r.\enq
\end{lemma}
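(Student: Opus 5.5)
The inequality should follow directly from the definitions of $\Delta_{n,n_0}$, $K_n$ and $r_n^a$, in the same way as the lower bound \eqref{ineq:delta-lower} in Lemma~\ref{lem-1-7-1}. Fix $x\in\bT$, $r\ge 0$ and $n\in\NN_0$, and write $r_k^a:=r_k^a(x)$. The idea is to bound $u_{n+n_0}(x,r_{n+n_0}^a)$ from above using the near-optimality of $r_{n+n_0}^a$, and to bound $u_n(x,r)$ from below using the fact that $K_n(x,t_n)$ is an infimum over $r$.

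First, by the choice of $r_{n+n_0}^a$ in Definition~\ref{def-3-2},
\[
u_{n+n_0}(x,r_{n+n_0}^a)\le K_{n+n_0}(x,t_{n+n_0})+2^{-(n+n_0)}u^\ast - t_{n+n_0}r_{n+n_0}^a\le K_{n+n_0}(x,t_{n+n_0})+2^{-(n+n_0)}u^\ast,
\]
where the last step drops the nonnegative term $t_{n+n_0}r^a_{n+n_0}$. Second, since $K_n(x,t_n)=\inf_{\rho\ge0}[t_n\rho+u_n(x,\rho)]$, taking $\rho=r$ gives
\[
K_n(x,t_n)\le t_n r+u_n(x,r),
\]
that is, $-u_n(x,r)\le t_n r - K_n(x,t_n)$.

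Adding the two inequalities yields
\[
u_{n+n_0}(x,r_{n+n_0}^a)-u_n(x,r)\le K_{n+n_0}(x,t_{n+n_0})-K_n(x,t_n)+2^{-(n+n_0)}u^\ast+t_n r .
\]
The first three terms on the right are exactly $\Delta_{n,n_0}(x)$ as defined in \eqref{3-8-2}, and $t_n r=a2^{n/\al}r$. This is \eqref{ineq:u-diff}.

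There is no real obstacle here. The only point needing care is to keep the full slack $2^{-(n+n_0)}u^\ast$ from Definition~\ref{def-3-2}, so that it matches the corresponding term in the definition of $\Delta_{n,n_0}$.
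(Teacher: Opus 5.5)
Your proof is correct and follows essentially the same argument as the paper. The paper likewise drops the nonnegative term $t_{n+n_0}r_{n+n_0}^a(x)$, uses the near-optimality of $r_{n+n_0}^a$ and the definition of $\Delta_{n,n_0}$, and then bounds $K_n(x,t_n)\le t_n r+u_n(x,r)$, all in a single chain of inequalities.
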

\bp By Definition \ref{def-3-2},  for any $r\ge 0$,
we have
\begin{align*} u_{n+ n_0} (x, r_{n+ n_0}^a( x))&\le t_{n+ n_0} r_{n+n_0}^a( x) +u_{n+ n_0} (x, r_{n+ n_0}^a( x)) \\
&\le K_{n+ n_0} (x, t_{n+ n_0} ) +\f {u^\ast} {2^{n+n_0}} =K_{n}(x, t_{n}) +\Delta_{n,n_0}(x)\\
&\le \Delta_{n,n_0}(x) + t_{n} r + u_{n}(x, r).\end{align*}
Rearranging terms yields the desired inequality \eqref{ineq:u-diff}.
\ep

We now combine Lemma~\ref{cor-1-7} with Lemmas~\ref{lem-1-7-1} and~\ref{lem-1-7-2} to derive an estimate for the chaining functional $\gamma_{\alpha,1}$ in terms of the growth of the associated $K$-functionals in a finite-dimensional setting. This estimate will be the main tool in the next section.

\begin{theorem}\label{thm-2-4}
Let $\bT$ be a subset of an $m$-dimensional real normed space $X=(X,\|\cdot\|)$, and let $\al>0$, $b_1,b_2>0$, and $n_0,n_1, \ell\in\NN$ with $n_1\le \lceil \log_2 m\rceil$ be given parameters.
Set
$
a:=\frac{1}{4b_2 C(\alpha)},$
where $C(\alpha)$ is the constant from Lemma~\ref{cor-1-7}. Assume that there exists a finite sequence of nonnegative functions
\[
\widetilde s_k\colon\bT\to[0,\infty),
\quad
k=0,1,\dots,\lceil\log_2 m\rceil,
\]
such that for every subset $A\subset\bT$ and every integer $n \in \NN_0$ satisfying $n+\ell\le
\lceil\log_2 m\rceil$, one has
\begin{align}\label{2-4}
e_{n+\ell}(A,\|\cdot\|)& \le \f 1{4C(\al)}\cdot  \diam(A,\|\cdot\|) +\sup_{x\in A}\wt s_{n}(x)+ b_1 \cdot 2^{n_0/\al}\cdot \sup_{x\in A} r_{n+n_0}^a(x)\\
&+
\f {b_2}{2^{n/\al} }\cdot\sup_{x\in A} \Bl[ u_{n+ n_0}(x, r_{n+ n_0}^a(x)) -u_{n}(x, r_A)\Br],\nonumber
\end{align}
where
\[
r_A=2^{n_0/\al}\cdot \sup\limits_{x\in A} r_{n+n_0}^a(x)+\diam(A, \|\cdot\|).
\]
Then
\beq \label{3-17-d} \ga_{\al, 1}(\bT, \|\cdot\|)\le  C(\al,\ell)  \Bigg[ (b_2n_0+b_1b_2) u^\ast+2^{n_1/\alpha}\diam(\bT,\|\cdot\|)+\sup_{x\in \bT}\sum_{n=n_1}^{\lceil \log_2 m \rceil}  2^{n/\al} \wt s_{n}(x)\Bigg]. \enq
\end{theorem}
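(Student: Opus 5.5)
The plan is to apply the finite-dimensional contraction principle, Lemma~\ref{cor-1-7}, with $\beta=1$ and $a$ replaced by $\frac{1}{4C(\al)}$. To do this we must absorb the shift $\ell$ and construct functionals $s_k$ from \eqref{2-4}. Then we bound the resulting sum $\sum_k 2^{k/\al}s_k(x)$ using Lemmas~\ref{lem-1-7-1} and~\ref{lem-1-7-2}.

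\textbf{Step 1 (defining $s_k$).} For indices $k<n_1+\ell$ we use the trivial estimate $e_k(A)\le \diam(A)\le \diam(\bT,\|\cdot\|)$, so we set $s_k\equiv \diam(\bT,\|\cdot\|)$. These terms contribute at most $C(\al,\ell)\,2^{n_1/\al}\diam(\bT)$.

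For $k=n+\ell$ with $n\ge n_1$, we must remove the dependence on $A$ in \eqref{2-4}. The term $r_A$ is not of the form $\sup_{x\in A}s(x)$, so we handle it using monotonicity. Since $u_n(x,\cdot)$ enters with a minus sign, we want a lower bound on $u_n(x,r_A)$. We apply Lemma~\ref{lem-1-7-2} with $r=r_A$:
\[
u_{n+n_0}(x,r^a_{n+n_0}(x))-u_n(x,r_A)\le \Delta_{n,n_0}(x)+a2^{n/\al}r_A .
\]
Multiplying by $b_2 2^{-n/\al}$ and using $ab_2=\frac1{4C(\al)}$, the bracket term in \eqref{2-4} is at most
\[
b_2 2^{-n/\al}\sup_{x\in A}\Delta_{n,n_0}(x)+\frac{1}{4C(\al)}\Bigl(2^{n_0/\al}\sup_{x\in A}r^a_{n+n_0}(x)+\diam A\Bigr).
\]
Hence for every $A$,
\[
e_{n+\ell}(A)\le \frac{1}{2C(\al)}\diam A+\sup_{x\in A}s_{n+\ell}(x),
\]
where
\[
s_{n+\ell}(x):=\wt s_n(x)+(b_1+1)2^{n_0/\al}r^a_{n+n_0}(x)+b_22^{-n/\al}\Delta_{n,n_0}(x).
\]
Here we used $\frac{1}{4C(\al)}\le 1$ to absorb the extra factor. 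Since the coefficient of $\diam A$ is $\frac{1}{2C(\al)}$, Lemma~\ref{cor-1-7} gives
\[
\ga_{\al,1}(\bT)\le 2C(\al)\sup_x\sum_{k\le\lceil\log_2 m\rceil}2^{k/\al}s_k(x).
\]
If that lemma needs a slightly smaller constant, we adjust the constant $a$ accordingly; we expect this to be routine.

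\textbf{Step 2 (summing).} The factor $2^{k/\al}=2^{\ell/\al}2^{n/\al}$ contributes only to $C(\al,\ell)$. The $\wt s_n$ part gives exactly the last term of \eqref{3-17-d}. For the $r$ part, \eqref{3-14b} yields
\[
\sum_n 2^{n/\al}2^{n_0/\al}r^a_{n+n_0}(x)\le \sum_k 2^{k/\al}r_k^a(x)\le C(\al)u^\ast/a=C(\al)b_2u^\ast ,
\]
which gives the $b_1b_2u^\ast$ term (and, absorbed there, a $b_2u^\ast$ term that is dominated by $b_2n_0u^\ast$). For the $\Delta$ part, the factor $2^{n/\al}$ cancels, and \eqref{ineq:delta-sum} gives $b_2\sum_n\Delta_{n,n_0}(x)\le b_2(n_0+1)u^\ast$.

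The main delicate point is Step 1: replacing $\sup_{x\in A}u_n(x,r_A)$ by pointwise quantities. Choosing $r=r_A$ in Lemma~\ref{lem-1-7-2} is what makes the $\diam A$ piece reappear with the small coefficient $ab_2$. This is exactly why $a$ is tied to $b_2$, and it needs care with the sup of a difference versus a difference of sups. We intend to apply the inequality pointwise in $x$ before taking the supremum.
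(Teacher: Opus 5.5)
Your proposal is correct and is essentially the paper's proof. You set $s_k\equiv\diam(\bT)$ for small $k$, apply Lemma~\ref{lem-1-7-2} pointwise with $r=r_A$ so that $ab_2=\frac1{4C(\al)}$ brings the diameter coefficient to exactly $\frac1{2C(\al)}$ (precisely what Lemma~\ref{cor-1-7} allows, so no adjustment of $a$ is needed; none would be permitted anyway, since $a$ is fixed by the hypothesis), and sum via Lemma~\ref{lem-1-7-1}. The differences are cosmetic: the paper absorbs the $a2^{(n+n_0)/\al}\sup_{x\in A}r^a_{n+n_0}(x)$ piece into $\Delta_{n,n_0}$ via \eqref{ineq:delta-lower} instead of invoking \eqref{3-14b}, and it keeps a constant $C(\al,\ell)$ in front of $\sup_{x\in A}s_k(x)$, which you also need because a sum of three suprema of nonnegative functions is only bounded by three times the supremum of their sum.
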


In applications of Theorem~\ref{thm-2-4}, the parameters $b_1,b_2,n_0$, and $n_1$ must typically be optimized. By contrast, the parameter $\ell$ is inessential, and the implicit constants are allowed to depend  on it.

\bp We may assume that $n_1+\ell \le \lceil \log_2 m \rceil$, since otherwise, combining \eqref{1-2-2025} with \eqref{1-1-0}, one readily obtains
\[
\gamma_{\alpha,1}(T,\|\cdot\|)\le C(\alpha)\, 2^{(n_1+\ell)/\alpha}\diam(T,\|\cdot\|).
\]
We apply Lemma \ref{cor-1-7}.
For $0\le k\le n_1+\ell$, define $s_k(x):=\diam(\bT,\|\cdot\|)$. For $n_1+\ell< k \le \lceil \log_2 m \rceil +\ell$, define
\begin{align} s_k(x)&:=b_2\cdot 2^{-k/\al} \Delta_{k-\ell,n_0}(x) + \wt s_{k-\ell}(x)+ b_1 \cdot 2^{n_0/\al}\cdot  r_{k+n_0-\ell}^a(x),\label{3-17c}\end{align}
where $\Delta_{k-\ell, n_0}(x)$ is defined in \eqref{3-8-2}.
We claim that for every subset  $A\subset \bT$ and every integer $0\le k\le \lceil \log_2 m\rceil$, the following estimate holds:
\beq\label{2-5}
e_k(A,\|\cdot\|)\le  \f 1{2C(\al)}\cdot \diam(A,\|\cdot\|) +C(\al,\ell)\sup_{x\in A} s_k(x).
\enq

By the definition of $s_k$, the estimate  \eqref{2-5} holds trivially if $0\le k\le n_1+\ell$. Now suppose  $k=n+\ell$ for some integer  $n>n_1$. Invoking Lemma \ref{lem-1-7-2} with $r=r_A$, we obtain
\begin{align*} &\sup_{x\in A} \Bl[ u_{n+ n_0}(x, r_{n+ n_0}^a(x)) -u_{n}(x, r_A)\Br]\\
&\le 2\sup_{x\in A} \Bl[  \Delta_{n,n_0}(x) + a 2^{(n+n_0)/\al} r_{n+n_0}^a(x) \Br] + a 2^{n/\al}\cdot\diam(A, \|\cdot\|)\\
&\le C_1(\al) \sup_{x\in A}   \Delta_{n,n_0}(x)+a 2^{n/\al}\cdot\diam(A, \|\cdot\|),\end{align*}
where the last inequality follows from \eqref{ineq:delta-lower}. Substituting this into the assumption  \eqref{2-4}, we find
\begin{align*}
&e_{k}(A,\|\cdot\|)=e_{n+\ell}(A,\|\cdot\|)\\
 &\le \Bl( \f 1{4C(\al)}+a b_2\Br)\cdot  \diam(A,\|\cdot\|)
 +C_1(\al)\cdot \sup_{x\in A} \left[ \wt s_n(x)+ b_1 2^{n_0/\al} r_{n+n_0}^a(x) +\f{b_2 \Delta_{n, n_0}(x)}{2^{n/\al}}\right] \\
& \le \f 1{2C(\al)}\cdot \diam(A,\|\cdot\|) +C_2(\al,\ell)\cdot \sup_{x\in A} s_k(x).
\end{align*}
This proves  the claim \eqref{2-5}.

Finally,  applying  Lemma  \ref{cor-1-7} yields
\begin{align}
\ga_{\al,1}(\bT,\|\cdot\|)&\le C_3(\al, \ell) \sup_{x\in \bT}
\sum_{k=0}^{\lceil \log_2 m \rceil} 2^{k/\al} s_k(x)\notag\\
&\le C_4(\al,\ell)\ 2^{n_1/\alpha} \diam(\bT,\|\cdot\|)+C_3(\al, \ell) \sup_{x\in \bT}
\sum_{k=n_1+\ell}^{\lceil\log_2 m\rceil} 2^{k/\al} s_k(x).\label{3-20a}\end{align}
Using \eqref{3-17c} and  Lemma \ref{lem-1-7-1},  for any $x\in \bT$, we have
\eq{\sum_{k=n_1+\ell}^{\lceil\log_2 m\rceil}2^{k/\al} s_k(x)&\le C_5(\al,\ell) \sum_{n=n_1}^{\lceil\log_2 m\rceil} \Bl[ b_2 \Delta_{n, n_0}(x)+2^{n/\al} \wt s_n(x) + b_1 2^{(n+n_0)/\al} r_{n+n_0}^a(x) \Br]\\
&\le C_6(\al,\ell)\Bl[ b_2 n_0 u^\ast+ b_1 b_2 u^\ast\Br]+ C_5(\al, \ell) \sum_{n=n_1}^{\lceil\log_2 m\rceil} 2^{n/\al} \wt s_n(x).
}
Substituting this last estimate into \eqref{3-20a} yields the desired estimate \eqref{3-17-d}.
\ep

\section{Proof of  Theorem \ref{thmGammaBound}}\label{sec:proof-thmGammaBound}

In this section,  we prove Theorem~\ref{thmGammaBound}, which is restated below for convenience. Throughout the paper,   $\|\cdot\|_q$ denotes the  norm $\|\cdot\|_{\ell_q^m}$ for any $1\le q\le \infty$.\\

\noindent {\bf Theorem \ref{thmGammaBound}.}
{\it Let $2 \le p < \infty$, $p_0\in(1, p]$, and $p_1:=\f {pp_0}{p-p_0}$.
Then there exists a constant $C = C(p)>0$
such that for every nonempty set $G \subset \mathbb{C}^m$
and  any choice of parameters  $b > 0$, $\alpha_0 \in (0,1)$, $n_0 \in \NN$ and $m_0\in \NN \cap [1, m)$, one has
\begin{align}
\ga_{p,1}(T_p(G),\|\cdot\|_{p'})
&\le C \Biggl[
\Bigl( b n_0 + b^2\cdot (\alpha_0)^{1/p}
+\frac{\log \frac{m}{m_0}}{(2^{n_0}\alpha_0)^{1/p}} \Bigr) \sup_{x \in G} \|x\|_{p}^p\tag{\ref{5-7-0}}
\\
&+
\sup_{x\in G}\|x\|_\infty^{p-p_0}\cdot
\frac{m_0^{p_0/p}\bigl[\diam(G, \|\cdot\|_{p_1}) \bigr]^{p_0}+ \bigl[\ga_{p,p_0}(G, \|\cdot\|_{p_1})\bigr]^{p_0}}{b^{p_0-1}}
\Biggr].\nonumber
\end{align}
}\\

By Remark~\ref{rm-complex}, we only need to  prove Theorem \ref{thmGammaBound} for  the real case $G\subset\RR^m$.
The proof is based on Theorem~\ref{thm-2-4}, applied to the set
\[
\bT:=T_p(G)=\{|g|^p\colon g\in G\},
\]
together with a suitable sequence of functionals $\widetilde s_k:\bT\to[0,\infty)$ constructed from an admissible sequence of partitions of $G$. A central step is to verify a local entropy estimate of the form~\eqref{2-4} for an appropriate increasing sequence of functionals
\[
u_n\colon\bT\times[0,\infty)\to[0,\infty),
\quad n\in\NN_0.
\]
This requires two technical lemmas, Lemmas~\ref{lem-3-3} and~\ref{lem-3-4}, presented in Subsection~\ref{sec-4-1}.  We prove  Lemma~\ref{lem-3-3}  in that  subsection, but  the proof of the more involved  Lemma~\ref{lem-3-4} is deferred to Subsection~\ref{sec-4-3}. Finally, the proof of Theorem~\ref{thmGammaBound} is completed  in Subsection~\ref{sec-4-2} using Lemma~\ref{lem-3-4}.

Throughout this section, we fix $2\le p<\infty$
and assume  $G\subset\mathbb R^m$ is a nonempty set satisfying
\begin{equation}\label{4-5-25}
  \sup_{g\in G}\|g\|_p^p=:u^\ast<\infty.
\end{equation}
Let $\mathbf{T}:=T_p(G)$. For brevity, we also write   $q$ in place of $p_0$.

For any index set   $I\subset\{1,2,\dots,m\}$, let
\[
\mathbb R^I:=\spn\{e_j\colon j\in I\}\subset\mathbb R^m,
\]
and denote by
$
R_I\colon \mathbb R^m\to\mathbb R^m
$
the orthogonal projection onto $\mathbb R^I$, given by
\[
R_Ix=\sum_{j\in I}x_je_j,
\quad x=(x_1,\dots,x_m)\in\mathbb R^m.
\]
Denoting the complement of $I$ by
$
I^c:=\{1,2,\dots,m\}\setminus I,$
we define the threshold set
 for any $\tau\ge0$ and $x\in \RR^m$ as
\[
I^c(x;\tau):=\{j\in I^c\colon x(j)\ge\tau\}.
\]

\subsection{Key lemmas}\label{sec-4-1}

We start with the following elementary inequality.
\begin{lemma}\label{lem-6-4-0-Ta} For any  $f, g\in \RR^m$,   $I\subset \{1,2,\dots, m\}$, $r\in[1, \infty)$, and   $\tau\ge 0$, we have
\begin{align}\label{4-1b-15}
\Bl\||f|^p-|g|^p\Br\|_{p'} &\le \Bl\| |R_I f|^p-|R_Ig|^p \Br\|_{p'} +\tau^{\f 1p}\Bl( \|R_{I^c}f\|_p^{p-1} + \|R_{I^c}g\|_p^{p-1}\Br)+
\\
&+ 2p\bigl(S^r(\tau)\bigr)^{\f 1 {rp'}}\|R_{I^c} (f-g)\|_{p'r'},
\notag
\end{align}
where
\[
S^r(\tau)=\max\Biggl\{\  \sum_{j\in I^c(|f|^p;\tau)}\   |f(j)|^{pr},\  \sum_{j\in I^c(|g|^p;\tau)}\   |g(j)|^{pr}\Biggr\}.
\]
\end{lemma}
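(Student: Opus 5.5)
The plan is to split the index set into $I$ and $I^c$ and estimate each piece by elementary pointwise inequalities. Since the supports of $R_I$ and $R_{I^c}$ are disjoint,
\[
|f|^p-|g|^p=\bigl(|R_If|^p-|R_Ig|^p\bigr)+\bigl(|R_{I^c}f|^p-|R_{I^c}g|^p\bigr).
\]
The triangle inequality in $\ell_{p'}$ gives the first term of \eqref{4-1b-15} directly. It remains to bound $\||R_{I^c}f|^p-|R_{I^c}g|^p\|_{p'}$.

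For this I split $I^c$ into three sets:
\[
J_0:=I^c\setminus\bigl(I^c(|f|^p;\tau)\cup I^c(|g|^p;\tau)\bigr),\qquad J_f:=I^c(|f|^p;\tau),\qquad J_g:=I^c(|g|^p;\tau)\setminus J_f .
\]
\textbf{Small coordinates ($J_0$).} Here both $|f(j)|^p<\tau$ and $|g(j)|^p<\tau$. I bound the difference crudely by $|f(j)|^p+|g(j)|^p$ and treat the two functions separately. For $f$,
\[
\sum_{j\in J_0}|f(j)|^{pp'}\le \tau^{p'-1}\sum_{j\in I^c}|f(j)|^p ,
\]
because $pp'-p=p(p'-1)$ and $|f(j)|^{p(p'-1)}\le \tau^{p'-1}$. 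Taking the $1/p'$ power gives
\[
\tau^{(p'-1)/p'}\|R_{I^c}f\|_p^{p/p'}=\tau^{1/p}\|R_{I^c}f\|_p^{p-1},
\]
using $(p'-1)/p'=1/p$ and $p/p'=p-1$. The same bound holds for $g$, which produces the middle term of \eqref{4-1b-15}.

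\textbf{Large coordinates ($J_f\cup J_g$).} Here I use the mean value inequality
\[
\bigl||a|^p-|b|^p\bigr|\le p\max(|a|,|b|)^{p-1}|a-b| .
\]
The main obstacle is that the maximum may be attained by the function that is \emph{not} above threshold at that coordinate. To handle this, on $J_f$ I use
\[
\max(|f|,|g|)^{p-1}\le 2^{p-2}\bigl(|f|^{p-1}+|f-g|^{p-1}\bigr),
\]
or alternatively split further according to whether $|g(j)|\le 2|f(j)|$.

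In the main case, where the maximum is comparable to $|f(j)|$, Hölder's inequality with exponents $r$ and $r'$ applied to $\sum |f(j)|^{(p-1)p'}|f(j)-g(j)|^{p'}$ gives
\[
\Bigl(\sum_{J_f}|f|^{pr}\Bigr)^{1/(rp')}\|R_{I^c}(f-g)\|_{p'r'}\le (S^r(\tau))^{1/(rp')}\|R_{I^c}(f-g)\|_{p'r'},
\]
since $(p-1)p'=p$. The case $J_g$ is symmetric. Summing these contributions gives the constant $2p$ (possibly after adjusting constants).

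The case $|g(j)|>2|f(j)|$ on $J_f$ needs separate care. There $|g(j)|\le 2|f(j)-g(j)|$, and also $|g(j)|^p>\tau$, so such $j$ lies in $I^c(|g|^p;\tau)$. I would therefore reassign such coordinates to the $g$-set, so that on each coordinate the maximum is attained by a function exceeding the threshold. This gives a clean partition: on $I^c(|f|^p;\tau)\cup I^c(|g|^p;\tau)$, assign $j$ to whichever of $f,g$ has the larger modulus, which necessarily exceeds the threshold. Then Hölder's inequality applies directly with the larger function, yielding at most $p\,(S^r)^{1/(rp')}\|\cdot\|_{p'r'}$ for each of the two groups, hence the total $2p$.
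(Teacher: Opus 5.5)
Your proposal is correct and is essentially the paper's argument. The paper uses the same case split, according to whether the larger of $|f(j)|,|g(j)|$ lies below or above $\tau^{1/p}$. It encodes this split in the single pointwise inequality $|a^p-b^p|^{p'}\le p^{p'}(a_\eta^p+b_\eta^p)|a-b|^{p'}+\eta^{p'}(a^p+b^p)$ with $\eta=\tau^{1/p}$, sums the $p'$-th powers over $I^c$, applies H\"older with exponents $r,r'$, and takes the $1/p'$ power; you instead partition $I^c$ into three disjoint sets and use the triangle inequality in $\ell_{p'}$. Only your final ``assign each coordinate to the larger modulus'' partition is needed. Drop the tentative bound $2^{p-2}(|f|^{p-1}+|f-g|^{p-1})$: it would introduce an extra $|f-g|^p$-type term that does not appear in \eqref{4-1b-15}, and it would not give the constant $2p$.
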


\bp First, we show that for any  $a, b\ge 0$ and $\eta\ge 0$,
\begin{equation}\label{4-7-a}
  |a^p-b^p|^{p'} \le  p^{p'} ( a_\eta^p+b_\eta^p)|b-a|^{p'} + \eta^{p'} (a^p+b^p),
\end{equation}
where $u_\eta =u\cdot  \ind_{[\eta,\infty)} (u)$ for $u\ge 0$.  Without loss of generality, we may assume that $a\ge b$.
If $a<\eta$, then
\[  |a^p-b^p|^{p'} \le (a^p)^{p'} =(a^p)^{p'-1} a^p\le
( \eta^p)^{p'-1} (a^p+b^p) =\eta^{p'} (a^p+b^p). \]
If $a\ge \eta$, then $a=a_\eta$ and
\begin{align*}
  |a^p-b^p|^{p'} &\le ( p a^{p-1} |b-a|)^{p'} = p^{p'} a^{p} |b-a|^{p'}
   \le  p^{p'} ( a_\eta^p +b_\eta^p) |b-a|^{p'}.
\end{align*}
Thus, in either case,   we prove   \eqref{4-7-a}.

 Next, we prove \eqref{4-1b-15}.  Let $f, g\in \RR^m$ and $I\subset \{1,2,\dots, m\}$. Then
\begin{align}
\Bl\||f|^p-|g|^p\Br\|_{p'}&\le
\Bl\| |R_I f|^p-|R_Ig|^p \Br\|_{p'}+\Bl\| |R_{I^c} f|^p-|R_{I^c}g|^p \Br\|_{p'}.\label{4-3-15}
\end{align}
Using \eqref{4-7-a} with $\eta=\tau^{1/p}$, we obtain
\begin{align*}
\Bl\| &|R_{I^c} f|^p-|R_{I^c}g|^p \Br\|_{p'}^{p'}=\sum_{j\in I^c} \Bl||f(j)|^p-|g(j)|^p\Br|^{p'}\notag\\
 &\le p^{p'} \sum_{j\in I^c} |f(j)-g(j)|^{p'} \Bl( |f(j)|^p\cdot \ind_{\{|f(j)|\ge \eta\}}(j)+|g(j)|^p \cdot\ind_{\{|g(j)|\ge \eta\}}(j)\Br)\notag\\
&\   \    \    \   +\eta^{p'} \sum_{j\in I^c} \Bl( |f(j)|^p +|g(j)|^p\Br)\notag\\
&\le  \tau^{\f 1 {p-1}}(\|R_{I^c}f\|_p^p+\|R_{I^c} g\|_p^p) + 2p^{p'} \bigl(S^r(\tau)\bigr)^{1/r}\| R_{I^c} (f- g)\|_{p'r'}^{p'}.
\end{align*}
This combined with \eqref{4-3-15} yields  \eqref{4-1b-15}.
\ep

We now apply Lemma~\ref{lem-6-4-0-Ta} to establish a key local entropy estimate, which provides control of the form~\eqref{1-2-6}.

\begin{lemma}\label{lem-3-3}
Let $\ell\ge 1$ be a fixed, inessential integer, and let  $(\cA_k)_{k \ge 0}$ be an arbitrary admissible sequence of partitions of $G$. Then,
for any $q\in(1, p]$, any integer $n\ge 0$,  any   parameters $\beta > 0$ and $\tau> 0$ (possibly depending on $n$), and every subset $A \subset \bT:=T_p(G)$,  the following estimate holds with $p_1:=\f {pq}{p-q}$:
\begin{align}\label{4-5-15}
&e_{n+\ell+1}(A, \|\cdot\|_{p'})
\le 16 \cdot 2^{-2^{\ell}} \cdot \diam(A, \|\cdot\|_{p'}) + 8 \tau^{1/p} (u^\ast)^{1/p'}+ \\
&\  \ + 8p \beta\cdot \min_{|I| \le 2^n} S_{I^c}(A; \tau)
+ 8 p\beta^{-(q-1)} \cdot
\sup_{f\in G}\|f\|_\infty^{p-q}
\cdot
\sup_{x \in A}\ \left[ \diam\, (\cA_{n+\ell}(g_x), \|\cdot\|_{p_1})\right]^{q},\notag
\end{align}
where $I\subset\{1,2,\dots, m\}$ and
\[
S_{I^c}(A; \tau) := \sup_{x \in A}\  \sum_{j\in I^c(x;\tau)}\  x(j).
\]
Here, for each $ x \in A $, we denote by $g_x\in G$ a vector such that $x = |g_x|^p$, and by
$\cA_{n+\ell}(g_x)$  the unique cell of the partition $\cA_{n+\ell}$ that contains $g_x$.
\end{lemma}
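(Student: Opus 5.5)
The plan is to build an explicit covering of $A$ in $\|\cdot\|_{p'}$ with at most $N_{n+\ell+1}$ centers, using Lemma~\ref{lem-6-4-0-Ta} to control distances. Fix an index set $I$ with $|I|\le 2^n$ attaining $\min_{|I|\le 2^n}S_{I^c}(A;\tau)$. For $x=|g_x|^p\in A$, split $\||g_x|^p-|g_y|^p\|_{p'}$ via \eqref{4-1b-15} with this $I$ and with $r:=q/(q-\dots)$ chosen so that $p'r'=p_1$ (that is, $\frac1{p'r'}=\frac1{p_1}=\frac1q-\frac1p$). This gives three contributions: the coordinates in $I$, the $\tau$-term, and the mixed term involving $S^r(\tau)$.

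\textbf{The $\tau$-term and the mixed term.} The $\tau$-term is at most $2\tau^{1/p}(u^\ast)^{1/p'}$ by \eqref{4-5-25}. For the mixed term, $\sum_{j\in I^c(x;\tau)}|g_x(j)|^{pr}\le \|g_x\|_\infty^{p(r-1)}\sum_{j\in I^c(x;\tau)}x(j)\le \sup_G\|f\|_\infty^{p(r-1)}S_{I^c}(A;\tau)$. Hence the term is bounded by
\[
2p\,\bigl(\sup_G\|f\|_\infty^{p(r-1)}S_{I^c}\bigr)^{1/(rp')}\,\|g_x-g_y\|_{p_1}.
\]
A Young inequality $uv\le \beta u^{a}+\beta^{-(q-1)}v^{q}$, with conjugate exponents $q'$ and $q$ chosen to match the powers, should then split this into $\beta S_{I^c}(A;\tau)$ plus $\beta^{-(q-1)}\sup\|f\|_\infty^{p-q}\|g_x-g_y\|_{p_1}^q$. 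This requires checking that $(rp')^{-1}q'=1$ and that the $\|\cdot\|_\infty$ exponents combine to $p-q$; I expect the choice $r$ with $rp'=q'$ to make this work, so $r=q'/p'\ge1$ since $q\le p$.

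\textbf{Handling the coordinates in $I$ and the partition term.} The mixed term is used only for pairs $x,y$ whose $g_x,g_y$ lie in the same cell $\cA_{n+\ell}(g_x)$. That term is then bounded by $\diam(\cA_{n+\ell}(g_x),\|\cdot\|_{p_1})^q$, which produces the last term of \eqref{4-5-15}. The $I$-coordinates form a space of dimension at most $2^n$. For each cell $C$ of $\cA_{n+\ell}$ (there are at most $N_{n+\ell}$ of them), cover the projected set $R_I(A\cap T_p(C))$ in $\|\cdot\|_{p'}$ using Lemma~\ref{lem-1-2a}: in dimension $2^n$, $N_{n+\ell}$ points reduce the diameter by a factor $4\cdot 2^{-2^{n+\ell}/2^n}=4\cdot2^{-2^\ell}$. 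The total count is then $N_{n+\ell}^2=N_{n+\ell+1}$, giving the first term $16\cdot2^{-2^\ell}\diam(A,\|\cdot\|_{p'})$ once centers are taken in $A$ via \eqref{3-1-0}.

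\textbf{Assembly and main obstacle.} To combine, each $x$ is assigned to a center $y$ lying in the same cell with $R_I$-projection close; the constant factors of $8$ absorb the doubling coming from the non-centered covering. The main obstacle is this center selection. The covering must be performed inside each cell simultaneously for the projected part, while the center $y$ is required to lie in $A$ and in the same cell so that the $p_1$-diameter bound applies. I would handle it by covering each piece $A_C:=A\cap T_p(C)$ separately and selecting centers from $A_C$ itself.
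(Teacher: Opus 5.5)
Your argument is correct, but it organizes the covering differently from the paper. The paper takes two independent nets $\Lambda_1,\Lambda_2\subset A$ of size $N_{n+\ell}$ and uses the hybrid centers $R_Iy+R_{I^c}z$, applying Lemma~\ref{lem-6-4-0-Ta} with $f=g_x$ and $g=R_Ig_y+R_{I^c}g_z$. Since these centers need not lie in $A$, it pays a factor $2$ from \eqref{3-1-0} and arrives at the intermediate bound \eqref{4-8-15}, which still contains the term $2e_{n+\ell}(R_IA,\|\cdot\|_{p'})$. It then removes that term with the absorption assumption \eqref{4-6-15} and Lemma~\ref{lem-1-2a}. Only at the very end does it choose the second net as one point per cell of $\cA_{n+\ell}$.

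You instead nest the two coverings. You group the points of $A$ by the cell containing $g_x$, and you cover the projection $R_I(A_C)$ of each group by $N_{n+\ell}$ points of that same group. Each center is then a genuine point $y\in A$ whose $g_y$ lies in the same cell as $g_x$, so Lemma~\ref{lem-6-4-0-Ta} applies directly with $f=g_x$, $g=g_y$. No absorption step is needed. In fact you obtain \eqref{4-5-15} with the smaller constants $8\cdot 2^{-2^\ell}$, $2$, $2p$, $2p$ in place of $16\cdot 2^{-2^\ell}$, $8$, $8p$, $8p$. The paper's decoupled version does buy something your route does not: \eqref{4-8-15} holds for an arbitrary approximating set $S\subset G_A$ with $|S|\le N_{n+\ell}$, not only for one subordinate to a partition. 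The lemma never uses that generality, though.

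Two small points should be made precise.
\begin{enumerate}
\item Define $A_C:=\{x\in A\colon g_x\in C\}$ rather than $A\cap T_p(C)$. Alternatively, for a center $y$, use a preimage of $y$ lying in $C$. Either way, the estimate $\|g_x-g_y\|_{p_1}\le \diam(\cA_{n+\ell}(g_x),\|\cdot\|_{p_1})$ is then justified for the cell that appears in the statement.
\item The exponent check you left tentative does go through. With $r=q'/p'\ge 1$ one has $p'r'=p_1$, $rp'=q'$ and $p(r-1)/q'=(p-q)/q$. Hence the mixed term is $2p\,S_{I^c}(A;\tau)^{1/q'}\sup_{f\in G}\|f\|_\infty^{(p-q)/q}\|g_x-g_y\|_{p_1}$. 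Young's inequality with exponents $q',q$ then gives exactly $2p\beta\, S_{I^c}(A;\tau)+2p\beta^{-(q-1)}\sup_{f\in G}\|f\|_\infty^{p-q}\|g_x-g_y\|_{p_1}^q$.
\end{enumerate}
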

\bp  Let $G_A=T_p^{-1}(A):=\{g\in G\colon |g|^p\in A\} $,  so that  $A=\{|g|^p\colon g\in G_A\}$. For convenience, define $D:=\diam(A, \|\cdot\|_{p'})$. Without loss of generality, we may assume that
\beq \label{4-6-15}
e_{n+\ell+1} (A, \|\cdot\|_{p'})> 16\cdot 2^{-2^{\ell}} D,
\enq
as otherwise \eqref{4-5-15} holds trivially. Fix an index set $I\subset \{1,2,\dots, m\}$ such that $|I|\le 2^n$.
Note that  for each  $x=|g_x|^p\in A$, we may decompose
\[ x=R_I x+R_{I^c } x=|R_I g_x+R_{I^c} g_x|^p=|R_I g_x|^p +|R_{I^c} g_x|^p.\]
Let $\Ld_1, \Ld_2\subset A$ be arbitrary subsets with  $|\Ld_1|, |\Ld_2|\le N_{n+\ell}$, and define
\[
\Ld:=\Bl\{ R_I y+R_{I^c} z\colon  y\in \Ld_1, z\in \Ld_2\Br\}\subset \RR^m.
\]
Then $|\Ld|\le |\Ld_1||\Ld_2|\le N_{n+\ell+1}$, and hence,
\beq \label{4-7-15}
e_{n+\ell+1} (A, \|\cdot\|_{p'})\le 2 \sup_{x\in A} \min_{u\in\Ld} \|x-u\|_{p'}.
\enq
The factor $2$ appears because of~\eqref{3-1-0}, since in general $\Lambda\not\subset A$.

Using Lemma \ref{lem-6-4-0-Ta} with $r = \frac{q'}{p'}\ge1$, for any  $x\in A$ and $u=R_I y+R_{I^c} z\in\Ld$ with $y\in\Ld_1$ and $z\in\Ld_2$, we have
\eq{\|x-u\|_{p'}&=\bigl\| |g_x|^p - |R_I g_y +R_{I^c} g_z|^p\bigr\|_{p'} \\
\le& \| R_I x  -R_I y\|_{p'}  +2\tau^{\f1p} (u^\ast)^{\f1{p'}}+ 2p \bigl(S^r_{x,z}(\tau)\bigr)^{\f 1 {q'}} \|g_x-g_z\|_{\frac{pq}{p-q}},
}
where we have
\eq{
S^r_{x,z}(\tau)&:=\max\Biggl\{ \sum_{j\in I^c(|g_x|^p;\tau)} |g_x(j)|^{pr},\ \ \sum_{j\in I^c(|g_z|^p;\tau)} |g_z(j)|^{pr}\Biggr\},\\
&=\max\Biggl\{ \sum_{j\in I^c(x;\tau)} |x(j)|^r,\ \ \sum_{j\in I^c(z;\tau)}
|z(j)|^r\Biggr\}\le \sup_{f\in G}\|f\|_\infty^{\frac{q'(p-q)}{q}}\cdot S_{I^c}
(A; \tau),
}
with the inequality following from $r-1=\frac{q'}{qp}(p-q)$. Substituting  into \eqref{4-7-15} yields
\eq{&e_{n+\ell+1} (A, \|\cdot\|_{p'})\\
\le&2 \sup_{x\in A} \min_{y\in\Ld_1} \bigl\| R_I x -R_I y\bigr\|_{p'}  +4\tau^{\f1p} (u^\ast)^{\f1{p'}}+ 4p \bigl(S_{I^c} (A; \tau)\bigr)^{1/q'}
\sup_{f\in G}\|f\|_\infty^{\frac{p-q}{q}}
\sup_{x\in A}\min_{z\in\Ld_2}\|g_x-g_z\|_{\frac{pq}{p-q}}\\
\le & 2\sup_{x\in A} \min_{y\in\Ld_1} \bigl\| R_I x -R_I y\bigr\|_{p'}  +4\tau^{\f1p} (u^\ast)^{\f1{p'}}+4p\be  S_{I^c} (A; \tau)+\\
&+ 4p\be^{-(q-1)}
\sup_{f\in G}\|f\|_\infty^{p-q}
\sup_{x\in A}\min_{z\in\Ld_2} \|g_x-g_z\|_{\frac{pq}{p-q}}^{q},
}
where we used Young's inequality in the last step.
Taking infimum over all such  subsets $\Ld_1, \Ld_2\subset A$, we deduce
\begin{align}\label{4-8-15}
e_{n+\ell+1} (A, \|\cdot\|_{p'})&\le 2 e_{n+\ell}(R_I A, \|\cdot\|_{p'})
+4\tau^{\f1p} (u^\ast)^{\f1{p'}}+4p\be  S_{I^c} (A; \tau)+\\
&+4 p\be^{-(q-1)}\
\sup_{f\in G}\|f\|_\infty^{p-q}
\inf_{\substack{S\subset G_A \\|S|\le N_{n+\ell}}}\  \sup_{x\in A} \ \min_{g\in S} \|g_x-g\|_{\frac{pq}{p-q}}^{q}.
\notag
\end{align}

Since
\[R_I A\subset \{ x\in \RR^I\colon\|x\|_{p'}\le D\},\]
and $|I|\le 2^n$, we apply Lemma \ref{lem-1-2a} to obtain
\begin{align*}
e_{n+\ell}(R_I A,\|\cdot\|_{p'}) &\le D\cdot e_{n+\ell}(B_{p'}^{2^n}, \|\cdot\|_{p'})
 \le 4\cdot  2^{-2^{\ell}}\cdot D< \f14 e_{n+\ell+1} (A, \|\cdot\|_{p'}),
\end{align*}
where $B_{p'}^k:=\{x\in \RR^k\colon \|x\|_{p'}\le 1\}$, and we used \eqref{4-6-15} in the last step.
Substituting  into \eqref{4-8-15}, we then deduce
\begin{align}\label{4-9-15}
e_{n+\ell+1} (A, \|\cdot\|_{p'})&\le
8\tau^{\f1p} (u^\ast)^{\f1{p'}}+8 p\be  S_{I^c} (A; \tau)\\
&+8 p\be^{-(q-1)}\
\sup_{f\in G}\|f\|_\infty^{p-q}
\inf_{\substack{S\subset G_A \\|S|\le N_{n+\ell}}}\  \sup_{x\in A} \ \min_{g\in S} \|g_x-g\|_{p_1}^{q},
\notag
\end{align}
where $p_1:=\frac{pq}{p-q}$.

To estimate the last term,  we use the given admissible sequence of partitions of $G$.
Choose  a set $S_n\subset G_A$  such   that  for each  cell $F$ from the partition $\cA_{n+\ell}$ with $F\cap G_A\neq \emptyset$,  $F\cap S_n$ contains exactly one point from $F\cap G_A$.  Then   $|S_n|\le N_{n+\ell}$, and moreover,
\begin{align}
\label{4-10-15}
\inf_{\substack{S\subset G_A\\|S|\le N_{n+\ell}}}\  \sup_{x\in A} \ \min_{g\in S} \|g_x-g\|_{p_1}^{q}
&\le   \sup_{x\in A} \ \min_{g\in S_n} \|g_x-g\|_{p_1}^{q}\le \sup_{x\in A} \min_{g\in S_n\cap \mathcal{A}_{n+\ell} (g_x) } \|g_x-g\|_{p_1}^{q}\\
&\le  \sup_{x\in A}\left[ \diam(\cA_{n+\ell}(g_x), \|\cdot\|_{p_1})\right]^{q}.\notag
\end{align}
Substituting \eqref{4-10-15} into \eqref{4-9-15} completes the proof of  \eqref{4-5-15}.
\ep

To prove Theorem \ref{thmGammaBound}, we will apply Theorem \ref{thm-2-4} with the parameter $\al=p$. The crucial part is  to establish a local entropy number estimate of the form \eqref{2-4}. Lemma~\ref{lem-3-3} provides the entropy estimate \eqref{4-5-15}, which can be rewritten as
\begin{align}
e_{n+\ell+1}(A, \|\cdot\|_{p'})
\le 16 \cdot 2^{-2^{\ell}}  \diam(A, \|\cdot\|_{p'}) + \sup_{x\in A} \wt s_{n}(x) + 8p \beta \cdot \min_{|I| \le 2^n} S_{I^c}(A; \tau),\label{4-12-25}\end{align}
where $\wt s_n(x)=\wt s_{n,1}(x)+\wt s_{n,2}(x)$, $\wt s_{n,1}(x):=  8\tau^{1/p} (u^\ast)^{1/p'}$ and
\begin{align}\label{4-13b}
\wt s_{n,2}(x):=8p
\beta^{-(q-1)} \cdot
\sup_{f\in G}\|f\|_\infty^{p-q}
\cdot
\left[ \diam\, (\cA_{n+\ell}(g_x), \|\cdot\|_{\frac{pq}{p-q}})\right]^{q}.
\end{align}
For the term $\wt s_{n,1}$,  choosing  $\tau:=(\al_0 2^{n+n_0-1}) ^{-1} u^\ast$ for some parameters $\al_0\in (0, 1)$ and $n_0\in\NN$, we obtain
\beq \sup_{x\in A}\sum_{n=n_1}^{\lceil\log_2 m\rceil} 2^{\f np} \wt{s}_{n,1}(x)=\f{ 8u^\ast }{\al_0^{1/p} }\sum_{n= n_1}^{\lceil\log_2 m\rceil} 2^{-\f {n_0-1}p}\le \f {C\cdot (\log_2 m - n_1) \cdot u^\ast}{
 ( 2^{n_0}\al_0)^{1/p} },\label{4-13-25}
\enq
for any $n_1\in [1,\lceil \log_2 m\rceil]\cap \NN$. For the term $\wt s_{n,2}$, we may  choose  $\be=b 2^{-\f np}$ for some parameter $b>0$, and optimize the admissible sequence so that
\begin{align}\label{4-14-25}
\sup_{x\in A}\sum_{n=0}^\infty 2^{\f np} \wt s_{n,2}(x) &= 8p
b^{-(q-1)} \cdot
\sup_{f\in G}\|f\|_\infty^{p-q}
\sup_{x\in A}\sum_{n=0}^\infty \Bl[ 2^{\f np}\diam\, (\cA_{n+\ell}(g_x), \|\cdot\|_{\frac{pq}{p-q}})\Br]^{q}\\
&\le 16 pb^{-(q-1)}
\sup_{f\in G}\|f\|_\infty^{p-q}
\Bl[\ga_{p, q} (G, \|\cdot\|_{\frac{pq}{p-q}})\Br]^{q}.\notag
\end{align}
The main difficulty comes from the  term involving $\min\limits_{|I| \le 2^n} S_{I^c}(A; \tau)$.
To establish a local entropy number bound of the form \eqref{2-4}, it is necessary to construct   an increasing sequence of bounded nonnegative functions  $u_n\colon {\bf T}\times [0,\infty)\to [0,\infty)$, $n\in \NN_0$  satisfying the estimate \eqref{4-16-25} below. The existence of such a sequence is ensured by the following lemma.

\begin{lemma}\label{lem-3-4}
Let $p\in[2, \infty)$.
Let $\al_0\in (0, 1)$ and $n_0\in\NN\cap [2,\infty)$ be given parameters. Define, for each $n\in\NN_0$,
\[\al_n:=\al_0 2^n\  \ \text{ and}\  \  \tau:=\tau_n=(\al_{n+n_0-1}) ^{-1} u^\ast, \]
where $u^\ast>0$ is the constant given in \eqref{4-5-25}.
Then there exists an increasing sequence of bounded, nonnegative functions
\[u_n\colon {\bf T}\times [0,\infty)\to [0,\infty), \  \  n=0,1,2,\dots,\lceil \log_2 m \rceil,\]
such that
\[ 0\le u_n(x, r)\le u_{n+1}(x, r)\le u^\ast,\   \ \forall\, x\in {\bf T},\  \ \forall\, r\ge 0,\   \ \forall\, n\in\NN_0,\]
and the following growth condition holds:

\noindent
for every subset $A\subset {\bf T}$, every constant $a>0$  and every integer $1\le n\le \lceil \log_2 m \rceil$, one has
\begin{align}\label{4-16-25}
\sup_{x\in A}& \Bl[ u_{n+n_0} (x, r_{n+n_0}^a(x)) -u_n(x, r_A)\Br] +(\al_{n+n_0-1})^{\f 1p} \cdot \sup_{x\in A} r_{n+n_0}^a(x)\\
& \ge \min_{|I|\le \al_n} \sup_{x\in A} \ \sum_{j\in I^c(x,\tau)} |x(j)|=\min_{|I|\le \al_n} S_{I^c}(A,\tau){\ge \min_{|I| \le 2^n} S_{I^c}(A; \tau)},
\nonumber
\end{align}
where $r_n^a(x)\ge 0$ is as defined in Definition \ref{def-3-2}, and
\[r_A= 2^{n_0/p}\sup_{x\in A} r_{n+n_0}^a (x) +\diam(A, \|\cdot\|_{p'}).\]
\end{lemma}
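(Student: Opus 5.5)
The plan is to build $u_n$ from ``top-$k$ mass'' functionals smoothed over $\|\cdot\|_{p'}$-balls, in the spirit of van Handel's interpolation approach. For $y\in\RR^m$ with nonnegative entries and $k\ge 0$, let $M_k(y)$ denote the sum of the $\lfloor k\rfloor$ largest coordinates of $y$. For $y\in\bT$ we have $0\le M_k(y)\le \|y\|_1\le u^\ast$, and $M_k$ is nondecreasing in $k$. I would then define
\[
u_n(x,r):=\inf\bigl\{M_{\al_n}(y)\colon y\in\bT,\ \|y-x\|_{p'}\le r\bigr\},\qquad x\in\bT,\ r\ge0.
\]
This gives $0\le u_n\le u_{n+1}\le u^\ast$, because $\al_n=\al_02^n$ increases in $n$. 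Each $u_n(x,\cdot)$ is also nonincreasing in $r$. So all the structural requirements of Definition~\ref{def-3-2} hold, and everything reduces to the growth inequality \eqref{4-16-25}.

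To prove \eqref{4-16-25}, fix $A$, $a$ and $n$, and choose $\bar x\in A$ attaining the supremum on the left up to a small error. Let $y\in\bT$ be a near-minimizer for $u_{n+n_0}(\bar x,r^a_{n+n_0}(\bar x))$, so that $\|y-\bar x\|_{p'}\le r^a_{n+n_0}(\bar x)$. Since $r_A\ge 2^{n_0/p}r^a_{n+n_0}(\bar x)$, the point $y$ is admissible in the infimum defining $u_n(\bar x,r_A)$. Let $I$ be the set of indices of the $\al_n$ largest coordinates of $y$; then $u_n(\bar x,r_A)\le M_{\al_n}(y)=\sum_{j\in I}y(j)$. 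Next let $J:=I^c(\bar x;\tau)$. Because $\sum_j\bar x(j)\le u^\ast$, we get $|J|\le u^\ast/\tau=\al_{n+n_0-1}$, and hence $|I\cup J|\le\al_n+\al_{n+n_0-1}\le\al_{n+n_0}$ (this uses $n_0\ge2$). Consequently
\[
u_{n+n_0}(\bar x,r^a_{n+n_0}(\bar x))\approx M_{\al_{n+n_0}}(y)\ge \sum_{j\in I}y(j)+\sum_{j\in J}y(j).
\]
By H\"older, $\sum_{j\in J}y(j)\ge \sum_{j\in J}\bar x(j)-|J|^{1/p}\|y-\bar x\|_{p'}$. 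Subtracting the bound on $u_n(\bar x,r_A)$ leaves
\[
\sum_{j\in I^c(\bar x;\tau)}\bar x(j)\le u_{n+n_0}(\bar x,r^a_{n+n_0}(\bar x))-u_n(\bar x,r_A)+(\al_{n+n_0-1})^{1/p}r^a_{n+n_0}(\bar x).
\]
This is exactly the shape of the left side of \eqref{4-16-25}.

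The main obstacle is uniformity. The right side of \eqref{4-16-25} is $\min_{|I|\le\al_n}\sup_{x\in A}$, so one index set $I$ must serve every $x\in A$ at once, whereas the argument above builds $I$ from the particular point $\bar x$. My first attempt would be to fix $I$ once from a reference point $x_0\in A$, using that every $x\in A$ lies within $\diam(A,\|\cdot\|_{p'})\le r_A$ of $x_0$, and to replace the infimum over a ball around $x$ by one that also covers the ball around $x_0$. If this produces an error of order $\al_n^{1/p}\diam(A)$, the definition of $u_n$ must be adjusted to absorb it. I would do this by modifying the choice of near-minimizer, for instance taking an infimum over $\|y-x\|_{p'}\le r$ of $M_{\al_n}(y)$ plus a penalty on mass at or above level $\tau_n$, so that the discrepancy between points of $A$ is charged to the $r_A$-slack rather than appearing as an extra term. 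I expect this reconciliation between the pointwise construction and the uniform $\min$--$\sup$ to be the real difficulty. The remaining steps are routine: the cardinality count $|J|\le\al_{n+n_0-1}$, H\"older for the $J$-sum, and the final inequality $\min_{|I|\le\al_n}\ge\min_{|I|\le 2^n}$, which holds because $\al_0<1$.
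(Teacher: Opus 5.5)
\textbf{Verdict: genuine gap.} The uniformity problem you flag is not a technical wrinkle. With your functional $u_n(x,r)=\inf_{y\in B_{\mathbf T}(x,r)}M_{\alpha_n}(y)$, the growth condition \eqref{4-16-25} is false. So neither the reference-point argument nor an unspecified penalty can rescue the proof.

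Your pointwise computation is correct, and its H\"older and cardinality steps are the ones the paper uses. But it chooses $I$ from a near-minimizer attached to a single point. It therefore only controls $\sup_{x\in A}\min_{|I|\le\alpha_n}\sum_{j\in I^c(x;\tau)}x(j)$, which can be far below $\min_{|I|\le\alpha_n}S_{I^c}(A,\tau)$.

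Here is a concrete counterexample. Let $m=2$ and $G=\{(u^{1/p},0),(0,u^{1/p})\}$, so $\mathbf T=A=\{ue_1,ue_2\}$ with $u=u^\ast$. Take $\alpha_0=\tfrac12$, $n=1$, $n_0=2$, so $\alpha_1=1$, $\alpha_2=2$, $\alpha_3=4$ and $\tau=u/2$.
\begin{itemize}
\item Every $y\in\mathbf T$ has $M_1(y)=M_4(y)=u$, so your $u_1\equiv u_3\equiv u$.
\item Hence $K_3(x,t)=u$, and any admissible $r^a_3$ satisfies $r^a_3(x)\le 2^{-3-3/p}u/a$ (indeed $r^a_3\equiv 0$ is admissible).
\item So the left side of \eqref{4-16-25} is at most $2^{-3-2/p}u/a$, which is $<u$ for $a\ge1$.
\item The right side equals $u$: whichever single coordinate $I$ contains, the other point of $A$ keeps its full mass $u\ge\tau$ outside $I$.
\end{itemize}

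\textbf{The missing idea} is the order of the two optimizations. The paper takes
\[
u_n(x,r)=\max_{|I|\le\alpha_n}\inf_{y\in B_{\mathbf T}(x,r)}\|R_Iy\|_1,
\]
with the maximum over index sets outside the infimum over the ball. In the example this gives $u_1(x,r_A)=0$, so the left side becomes $u$.

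With this order you argue for a fixed $I$ with $|I|\le\alpha_n$.
\begin{enumerate}
\item Your H\"older and counting steps give $S_{I^c}(A,\tau)\le\sup_{x\in A}\max_{J\cap I=\emptyset,\,|J|\le\alpha_{n+n_0-1}}\inf_{y\in B_{\mathbf T}(x,s(x))}\|R_Jy\|_1+\alpha_{n+n_0-1}^{1/p}\sup_A s$.
\item Set $s_A=\sup_A s+\diam(A,\|\cdot\|_{p'})$. Then $B_{\mathbf T}(x,s(x))\subset B_{\mathbf T}(z,s_A)$ for all $x,z\in A$. Hence $\inf_{B_{\mathbf T}(x,s(x))}\|R_Iy\|_1\ge\sup_{z\in A}\inf_{B_{\mathbf T}(z,s_A)}\|R_Iy\|_1$, uniformly in $x$.
\item Add this to the $J$-term, using $\inf\|R_Jy\|_1+\inf\|R_Iy\|_1\le\inf\|R_{I\cup J}y\|_1$ for disjoint $I,J$ and $\alpha_n+\alpha_{n+n_0-1}\le\alpha_{n+n_0}$. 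This yields $S_{I^c}(A,\tau)+\sup_{z\in A}\inf_{B_{\mathbf T}(z,s_A)}\|R_Iy\|_1\le\sup_{x\in A}u_{n+n_0}(x,s(x))+\alpha_{n+n_0-1}^{1/p}\sup_A s$.
\item Now maximize over $I$ and use $\min_I a_I+\max_I b_I\le\max_I(a_I+b_I)$. The left side becomes $\min_{|I|\le\alpha_n}S_{I^c}(A,\tau)+\sup_{z\in A}u_n(z,s_A)$. This step produces one index set serving all of $A$, and it is exactly where $\max\inf$ rather than your $\inf\max$ is needed.
\item Finally take $s=r^a_{n+n_0}$, and use $s_A\le r_A$ together with the monotonicity of $u_n$ in $r$.
\end{enumerate}
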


The proof of Lemma~\ref{lem-3-4} will be deferred to Subsection~\ref{sec-4-3}.
For now, we accept it as given and proceed to the proof of Theorem~\ref{thmGammaBound} in the next subsection.

\subsection{Proof of Theorem \ref{thmGammaBound} (under the assumption of  Lemma~\ref{lem-3-4}) }\label{sec-4-2}
Let $q:=p_0\in(1, p]$, $b>0$, $\al_0\in (0, 1)$ and $n_0, m_0\in\NN$ be the parameters given in  Theorem \ref{thmGammaBound}. Let $\ell\in\NN$ be  a positive integer depending only on $p$, to  be specified later.

First, as we discussed above, we may apply Lemma~\ref{lem-3-3} with the choices of parameters \[\alpha = p,\  \beta = 2^{-n/p} b, \  \ \text{and}\  \tau = (\alpha_0 2^{n+n_0-1})^{-1} u^\ast.\]
This  yields the entropy estimate \eqref{4-12-25} for every subset $A \subset {\bf T}$.

 Next, we invoke Lemma \ref{lem-3-4} to estimate the term $\min\limits_{|I|\le 2^n} S_{I^c} (A, \tau)$ in \eqref{4-12-25}. Specifically, substituting the bound from \eqref{4-16-25} into
\eqref{4-12-25}, we obtain,  for any $a>0$,
 \begin{align*}&e_{n+\ell+1} (A, \|\cdot\|_{p'})\le  16\cdot 2^{-2^{\ell}} \cdot \diam(A, \|\cdot\|_{p'}) + \sup_{x\in A} \wt s_{n}(x)+\\
 &+8p b\cdot (2^{n_0} \al_0)^{1/p} \cdot \sup_{x\in A}r_{n+n_0}^a(x)+\f{8pb}{ 2^{n/p} }\cdot \sup_{x\in A} \Bl[ u_{n+n_0} (x, r_{n+n_0}^a(x)) -u_n(x, r_A)\Br],
 \end{align*}
 where
 $\wt s_n=\wt s_{n,1}+\wt s_{n,2}$, as  defined in \eqref{4-13b}, and \[r_A= 2^{n_0/p}\sup_{x\in A} r_{n+n_0}^a (x) +\diam(A, \|\cdot\|_{p'}). \]
 We now specify the constants $\ell$ and $a$. Let $C(p)>1$ denote the constant $C(\al)$  from Theorem~\ref{thm-2-4}, evaluated at  $\al=p$.  Define  $a:=\f 1{  32 p C(p) b}$, and  choose  $\ell=\ell(p)$ to be the smallest positive integer such that
 \[16\cdot 2^{-2^{\ell}}\le \f 1 {4C(p)}.\]
Invoking   Theorem \ref{thm-2-4} with the parameters
\[
\al=p,\  \ b_1=8p b\cdot (\al_0)^{1/p}, \  \  b_2=8p b,\ \ \text{and}\  \  n_1=\lceil \log_2 m_0\rceil,
\]
we obtain
\[
\ga_{p,1}(\mathbf{T}, \|\cdot\|_{p'})
\le C_1(p) \Bigg[  \big(bn_0+b^2 (\al_0)^{1/p}\big)u^\ast+2^{n_1/p}\diam({\bf T},\|\cdot\|_{p'})+\sup_{x\in {\bf T}}\sum_{n=n_1}^{\lceil \log_2 m \rceil} 2^{n/p}  \wt s_n(x)\Bigg].
\]

To estimate the last term, we take the infimum over all admissible sequences $(\cA_n)_{n\ge 0}$ of partitions of $G$.  Using  the estimates \eqref{4-13-25} and \eqref{4-14-25},
and taking $n_1= \lceil \log_2 m_0 \rceil$
we then obtain
\eq{
\sup_{x\in T}\sum_{n=n_1}^{\lceil \log_2 m \rceil} 2^{n/p}  \wt s_n(x)\le C_2(p)\Bigg[ \f { (\log_2 \frac{m}{m_0}) \cdot u^\ast}{
(2^{n_0}\al_0)^{1/p} } +
\sup\limits_{f\in G}\|f\|_\infty^{p-q}\cdot \frac{
\Bl[\ga_{p, q} (G, \|\cdot\|_{\frac{pq}{p-q}})\Br]^q}{b^{q-1}}\Bigg].
}

We now estimate the diameter  $\diam({\bf T}, \|\cdot\|_{p'})$ of ${\bf T}=T_p(G)$ in the norm $\|\cdot\|_{p'}$. For any $f, g\in G$, we have
\eq{\Bl\||f|^p-|g|^{p}\Br\|_{p'}&\le p \Bl\| (|f|^{p-1}+|g|^{p-1})|f-g|\Br\|_{p'}\\
&\le 2 p \|f-g\|_{\frac{pq}{p-q}}\cdot \sup_{h\in G}\|h\|_\infty^{\frac{p-q}{q}}\cdot (u^\ast)^{1/q'}.
}
Applying Young's inequality, we then obtain, for all $f, g\in G$,
\[ \Bl\||f|^p-|g|^{p}\Br\|_{p'}\le 2p bm_0^{-1/p}u^\ast +2p (bm_0^{-1/p})^{-(q-1)}\cdot \sup_{h\in G}\|h\|_\infty^{p-q}\cdot \|f-g\|_{\frac{pq}{p-q}}^q.\]
Taking the supremum over all $f, g\in G$, we conclude
\[
m_0^{1/p}\diam({\bf T},\|\cdot\|_{p'})
\le 2p b u^\ast + 2pb^{-(q-1)}m_0^{q/p}\, \sup_{f\in G}\|f\|_\infty^{p-q} \cdot \bigl[\diam(G, \|\cdot\|_{\frac{pq}{p-q}}) \bigr]^q.
\]
Finally, combining all of the above estimates,  we obtain the desired bound:
\begin{align*}
\ga_{p,1}(\mathbf{T}, \|\cdot\|_{p'})\le& C_3(p)  \Biggl[ \left(
bn_0+b^2 \cdot (\al_0)^{1/p} +\f { \log_2 \frac{m}{m_0}}{
(2^{n_0}\al_0)^{1/p}} \right)u^\ast
\\
&+
\sup_{f\in G}\|f\|_\infty^{p-q}\cdot
\f{m_0^{q/p}\bigl[\diam(G, \|\cdot\|_{\frac{pq}{p-q}}) \bigr]^q+ \bigl[\ga_{p,q}(G, \|\cdot\|_{\frac{pq}{p-q}})\bigr]^q}{b^{q-1}}\Biggr].
\end{align*}
Theorem~\ref{thmGammaBound} is now proved up to Lemma~\ref{lem-3-4}. \qed

\subsection{Proof of Lemma \ref{lem-3-4}}\label{sec-4-3}
 We consider the metric space ${\bf T}=(\mathbf{T}, \|\cdot\|_{p'})$, where
\[
{\bf T}=T_p(G)=\{|f|^p\colon  f\in G\}.\]
Recall that  \[  u^\ast=\sup_{g\in G}\|g\|_p^p=\sup_{x\in {\bf T}}\|x\|_1<\infty.\]
For $x\in {\bf T}$ and $r\ge 0$, let $B_{\bf T}(x,r):=\{y\in {\bf T}\colon  \|x-y\|_{p'}\le r\}$.
We need to estimate the quantity
\[
{\min_{|I|\le \al_n} S_{I^c} (A, \tau),}
\]
where
\[
 S_{I^c} (A, \tau) = \sup_{x\in A} \sum_{j\in I^c(x;\tau)} \  x(j).
\]
Throughout the proof, the letters   $I, J$ always denote subsets   of $\{1,2,\dots, m\}$.

We temporarily fix an index set $I\subset \{1,2,\cdots, m\}$ with $|I|\le \al_n$.
For any  $x\in \mathbf{T}$, we have
\[
|I^c(x;\tau)|\le \f {\|x\|_1}{\tau}\le \al_{n+n_0-1}.
\]
Consequently,
\eq{ S_{I^c} (A, \tau) &= \sup_{x\in A} \|R_{I^c(x;\tau)} x\|_1 \le  \sup_{x\in A} \max_{\sub{J:\ I\cap J=\emptyset\\ |J|\le \al_{n+n_0-1}}} \|R_J x\|_1.
}

Let $s\colon {\bf T}\to [0,\infty)$ be any   nonnegative function.
If $x\in {\bf T}$,  $y\in B_{\bf T}(x, s(x))$  and $|J|\le \al_{n+n_0-1}$, then
\eq{ \Bl| \|R_J x\|_1-\|R_J y\|_1\Br|\le \bigl\|R_J (x-y)\bigr\|_1 \le |J|^{\f1p} \|x-y\|_{p'}\le s(x) \cdot (\al_{n+n_0-1})^{\f1p},
}
 which implies
\[
\|R_J x\|_1 \le  (\al_{n+n_0-1})^{\f1p}\cdot s(x)+\inf_{y\in B_{\bf T}(x, s(x))} \|R_J y\|_1.
\]
Hence,
\beq  S_{I^c} (A, \tau) \le S_1(I; A, \tau)+ (\al_{n+n_0-1})^{1/p} \sup_{x\in A} s(x),\label{4-17-25}
\enq
where
\[ S_1(I; A, \tau):= \sup_{x\in A} \max_{\sub{J:\ I\cap J=\emptyset\\
|J|\le \al_{n+n_0-1}}} \inf_{y\in B_{\bf T}(x, s(x))}\|R_J y\|_1.\]
Observe  that
\eq{
&\sup_{x\in A} \max_{\sub{J:\  I\cap J=\emptyset\\
|J|\le \al_{n+n_0-1}}}\Bigg[  \inf_{y\in B_{\bf T}(x, s(x))}\|R_J y\|_1+ \inf_{y\in B_{\bf T}(x, s(x))}\|R_I y\|_1\Bigg]\\
&\le \sup_{x\in A} \max_{\sub{J: \ I\cap J=\emptyset\\
|J|\le \al_{n+n_0-1}}}  \inf_{y\in B_{\bf T}(x, s(x))}\|R_{I\cup J} y\|_1\le
\sup_{x\in A} \max_{|J|\le \al_{n+n_0}} \inf_{y\in B_{\bf T}(x, s(x))}\|R_{J} y\|_1,
}
where the last step uses the inequality $\al_{n+n_0-1}+\al_n\le \al_{n+n_0}$.
For any $x, z\in A$, we have
\[ B_{\bf T} (x, s(x))\subset B_{\bf T} (z, s_A),\  \ \text{where}\  \ s_A:= \sup_{z\in A} s(z) + \diam(A, \|\cdot\|_{p'}),\]
implying
\eq{ \inf_{y\in B_{\bf T}(x, s(x))}\|R_I y\|_1\ge  \sup_{z\in A} \inf_{y\in B_{\bf T}(z, s_A)}\|R_I y\|_1,\  \ \forall\, x\in A.}
It  follows that
\eq{ &S_1(I; A, \tau)+
{ \sup_{z\in A} \inf_{y\in B_{\bf T}(z, s_A)} }\|R_I y\|_1\le
\sup_{x\in A} \max_{|J|\le \al_{n+n_0}} \inf_{y\in B_{\bf T}(x, s(x))}\|R_{J} y\|_1.
}
This combined  with \eqref{4-17-25} yields
\eq{& S_{I^c} (A, \tau)+ \sup_{x\in A} \inf_{y\in B_{\bf T}(x, s_A)}\|R_I y\|_1\\
&\le \sup_{x\in A} \max_{|J|\le \al_{n+n_0}} \inf_{y\in B_{\bf T}(x, s(x))}\|R_{J} y\|_1+ (\al_{n+n_0-1})^{1/p} \sup_{x\in A} s(x).
}
Taking the maximum over all $I\subset \{1,2,\cdots, m\}$ with $|I|\le \al_n$, we obtain
\begin{align*}
&\min_{|I|\le \al_n} S_{I^c} (A, \tau)+ \sup_{x\in A} \max_{|I|\le \al_n}   \inf_{y\in B_{\bf T}(x, s_A)}\|R_I y\|_1\notag\\
&\le \max_{|I|\le \al_n}\Bigg[S_{I^c} (A, \tau)+ \sup_{x\in A} \inf_{y\in B_{\bf T}(x, s_A)}\|R_I y\|_1\Bigg]\notag\\
&\le \sup_{x\in A} \max_{|I|\le \al_{n+n_0}} \inf_{y\in B_{\bf T}(x, s(x))}\|R_{I} y\|_1+ (\al_{n+n_0-1})^{1/p} \sup_{x\in A} s(x).
\end{align*}
Thus, it follows that
\begin{align}\label{9-18}
&\min_{|I|\le \al_n} S_{I^c} (A, \tau)- (\al_{n+n_0-1})^{1/p} \sup_{x\in A} s(x)
\\ &\le \Bigg[\sup_{x\in A} \max_{|I|\le \al_{n+n_0}} \inf_{y\in B_{\bf T}(x, s(x))}\|R_{I} y\|_1- \sup_{x\in A} \max_{|I|\le \al_n}   \inf_{y\in B_{\bf T}(x, s_A)}\|R_I y\|_1\Bigg]\notag\\
&\le \sup_{x\in A}\Bigg[  \max_{|I|\le \al_{n+n_0}} \inf_{y\in B_{\bf T}(x, s(x))}\|R_{I} y\|_1-  \max_{|I|\le \al_n}   \inf_{y\in B_{\bf T}(x, s_A)}\|R_I y\|_1\Bigg].
\notag
\end{align}

Now we  define
a sequence of functionals $u_n\colon {\bf T}\times [0,\infty)\to [0,\infty)$, $n\in \NN_0$ as follows:
\[
u_n(x,r)=\max_{|I|\le \al_n} \inf_{y\in B_{\bf T}(x,r)} \|R_I y\|_1,\quad x\in \mathbf{T},\ \ r\ge 0.
\]
By  definition, the sequence $(u_n(x, r))_{n\ge 0}$ is decreasing in $r$ and satisfies
\[
0\le u_n(x,r)\le u_{n+1}(x,r)\le u^\ast \   \ \text{for all $ x\in \mathbf{T}$ and $r\ge 0$.}
\]
Furthermore, we can rewrite \eqref{9-18} equivalently in the form:
\begin{equation*}
\min_{|I|\le \al_n} S_{I^c} (A, \tau)\le   \sup_{x\in A} \Bl[ u_{n+n_0} (x, s(x))-u_n(x, s_A)\Br]+(\al_{n+n_0-1})^{1/p} \sup_{x\in A} s(x).
\end{equation*}
To complete the proof, we set
$s(x):=r_{n+n_0}^a(x)$, and observe that
\[ s_A:= \sup_{z\in A} s(z) + \diam(A, \|\cdot\|_{p'})\le r_A:=
2^{n_0/p}\sup_{x\in A} r_{n+n_0}^a (x) +\diam(A, \|\cdot\|_{p'}).\]
Since the function $u_n(x, \cdot)$ is decreasing, we prove  the growth condition \eqref{4-16-25}  for all $a>0$.
\qed

\end{document}